\newtheorem{thm}{Theorem}[section]
\newtheorem{lemma}[thm]{Lemma}
\newtheorem{prop}[thm]{Proposition}
\newtheorem{cor}[thm]{Corollary}
\newtheorem{defi}[thm]{Definition}
{\theoremstyle{definition}
\newtheorem{exa}[thm]{Example}
\newtheorem{rem}[thm]{Remark}}
\newcommand{\T}{\mathcal{T}}
\newcommand{\CP}{\mathbb{C}P}
\renewcommand{\epsilon}{\varepsilon}
\newcommand{\R}{\mathbb{R}}
\newcommand{\Z}{\mathbb{Z}}
\newcommand{\D}{\mathcal{D}}
\renewcommand{\o}{\preccurlyeq}
\newcommand{\Card}{\operatorname{Card}}
\newcommand{\codeg}{\operatorname{codeg}}
\newcommand{\Int}{\operatorname{Int}}
\newcommand{\dive}{\operatorname{div}}
\newcommand{\coef}[2][j]{\left\langle #2 \right\rangle_{#1}}
\begin{document}

\title{Polynomiality properties of  tropical refined invariants}

\author{Erwan Brugall\'e}
\address{Erwan Brugall\'e, Universit\'e de Nantes, Laboratoire de
  Math\'ematiques Jean Leray, 2 rue de la Houssini\`ere, F-44322 Nantes Cedex 3,
France}
\email{erwan.brugalle@math.cnrs.fr}

\author{Andr\'es {Jaramillo Puentes}}
\address{Andr\'es {Jaramillo Puentes}, Universit\'e de Nantes, Laboratoire de
  Math\'ematiques Jean Leray, 2 rue de la Houssini\`ere, F-44322 Nantes Cedex 3,
France}
\email{andres.jaramillo-puentes@univ-nantes.fr}

\subjclass[2020]{Primary 14T15, 14T90, 05A15; Secondary 14N10, 52B20}
\keywords{Tropical refined invariants, Enumerative geometry,
  Welschinger invariants, Gromov-Witten invariants, Floor diagrams}

\begin{abstract}
  Tropical refined invariants of toric surfaces constitute a
  fascinating interpolation between real and complex enumerative
  geometries via tropical geometry. They
  were originally introduced
 by Block and Göttsche,  and further extended by Göttsche and
 Schroeter in the case of rational curves. 

 In this paper, we study the polynomial behavior of coefficients 
of  these tropical refined
invariants. We prove that coefficients of small codegree
are polynomials in the Newton polygon of
the curves under enumeration, when one fixes the genus of the latter.
This provides  a  surprising reappearance, in a dual setting, of 
the so-called node polynomials and the Göttsche conjecture. 
Our methods, based on floor diagrams
introduced by  Mikhalkin and the first author,
are entirely combinatorial.
Although the combinatorial treatment needed here is different,
we follow the overall strategy designed by Fomin and Mikhalkin and further developed by Ardila and Block.
Hence our results may suggest
phenomena in complex enumerative geometry that have not been studied yet.

In the particular case of  rational curves, we extend
our polynomiality results by including the extra parameter $s$
recording the number of $\psi$ classes.
Contrary to the polynomiality with respect to $ \Delta$, the one with
respect to $s$ may be expected from considerations on Welschinger
invariants in real enumerative geometry.
This pleads in particular
in favor of a  geometric definition of Göttsche-Schroeter invariants.
\end{abstract}
\maketitle
\tableofcontents

\section{Introduction}\label{sec:main}

\subsection{Results}
Tropical refined invariants of toric surfaces have been introduced
in \cite{BlGo14} and further explored in several directions since then, see for example
 \cite{IteMik13,FiSt15,BlGo14bis,GoKi16, Mik15,NPS16,
Shu18,BleShu17,Bou17,GotSch16,Bou19,Blo19,Bru18, Blo20a,Blo20b}.
In this paper, we study the polynomial behavior of the coefficients 
of  these tropical refined
invariants, in connection with node polynomials and the Göttsche conjecture
on  
one hand, and with Welschinger invariants on the other hand.
Our methods are entirely combinatorial and do not require any
specific knowledge in complex or real enumerative
geometry, nor in tropical, algebraic or symplectic geometry. Nevertheless
our work probably only gains meaning in the light of these frameworks,
so we briefly indicate below how tropical refined invariants arose
from  enumerative geometry considerations, and 
present some further connections in Section \ref{sec:got}.
We also provide in Section \ref{sec:comp} a few explicit computations
in genus 0
that are interesting to interpret in the light of Section \ref{sec:got}.

\medskip
Given a convex integer polygon $\Delta\subset\R^2$, i.e. the convex hull of
finitely many points in $\Z^2$, 
Block and Göttsche proposed in \cite{BlGo14}
to enumerate  irreducible tropical
curves with Newton polygon $\Delta$ and genus $g$ as proposed in \cite{Mik1},
but replacing Mikhalkin's complex multiplicity with its
quantum analog. 
Itenberg and Mikhalkin proved in  \cite{IteMik13} that the resulting 
symmetric Laurent polynomial in the variable $q$
 does not depend on the configuration of points
chosen to define it. This Laurent polynomial
is called
a \emph{tropical refined invariant} and is denoted by
$G_{\Delta}(g)$.
As a main feature,
tropical refined invariants interpolate between
 Gromov-Witten invariants (for $q=1$) and tropical Welschinger
invariants (for $q=-1$) of the toric surface $X_\Delta$
defined by
the polygon $\Delta$.
They are also conjectured to agree with the
$\chi_y$-refinement of Severi degrees of $X_\Delta$
introduced in 
\cite{GotShe12}.

Göttsche and Schroeter extended
the work of \cite{BlGo14} in the case when  $g=0$.
They  defined in \cite{GotSch16} some tropical
refined descendant invariants, denoted by $G_{\Delta}(0;s)$, 
depending now on an additional integer parameter $s\in\Z_{\ge 0}$. 
On the complex side, the value at $q=1$ of $G_{\Delta}(0;s)$ recovers some
 genus 0 relative Gromov-Witten invariants (or  some
 descendant invariants) of $X_\Delta$. On the real side and when $X_\Delta$ is an
 unnodal del Pezzo surface,
 plugging  $q=-1$  in $G_{\Delta}(0;s)$ recovers Welschinger invariants
counting real algebraic (or
symplectic) rational curves  passing through a generic
real configuration of $\Card(\partial \Delta\cap\Z^2)-1$
  points in $X_\Delta$ containing exactly 
 $s$ pairs of complex conjugated points. The case when $s=0$
 corresponds to tropical Welschinger invariants, and
$ G_{\Delta}(0;0)=G_{\Delta}(0)$
 for any polygon $\Delta$.

For the sake of brevity,
we do not recall the definition of tropical refined invariants in this
paper. Nevertheless we provide in Theorems \ref{thm:fd} and \ref{thm:psi fd} a
combinatorial recipe that computes them when $\Delta$ is an 
\emph{$h$-transverse} polygon, via the so-called \emph{floor diagrams}
introduced by Mikhalkin and the first author in \cite{Br7,Br6b}.
Since the present
work in entirely based on these floor diagram computations,
the reader unfamiliar with the invariants
$G_\Delta(g)$ and $G_\Delta(0;s)$ may take Theorems  \ref{thm:fd}
and \ref{thm:psi fd} as  definitions rather than statements.

\medskip
Denoting by $\iota_\Delta$ the number of integer points contained in
the interior of $\Delta$, 
the invariant $G_\Delta(g)$ is non-zero if and only if
$g\in\{0,1,\cdots,\iota_\Delta\}$.
It is known furthermore,
see for example \cite[Proposition 2.11]{IteMik13}, that
in this case $G_\Delta(g)$
has degree\footnote{As for
polynomials, the degree of a Laurent polynomial $\sum_{j=-m}^na_jq^j$
with $a_n\ne 0$
is defined to be $n$.} $\iota_\Delta-g$.
In this paper we establish that
 coefficients of
small \emph{codegree} of  $G_{\Delta_{a,b,n}}(g)$ and $G_{\Delta_{a,b,n}}(0;s)$ are
asymptotically polynomials in $a,b,n$, and $s$, where
$\Delta_{a,b,n}$ is the convex  polygon depicted in
Figure \ref{fig:hirz}.
\begin{figure}[h]
\begin{center}
\begin{tabular}{ccc}
  \includegraphics[height=2.5cm]{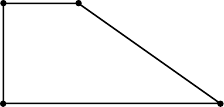}
  \put(-160,-10){$(0,0)$}
  \put(-160,75){$(0,a)$}
  \put(-105,75){$(b,a)$}
  \put(-15,-10){$(an+b,0)$}
&\hspace{12ex} &
  \includegraphics[height=2.5cm]{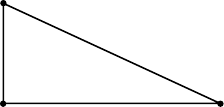}
  \put(-160,-10){$(0,0)$}
  \put(-160,75){$(0,a)$}
  \put(-15,-10){$(an,0)$}

  \\ \\ a) $\Delta_{a,b,n}$ && b) $\Delta_{a,0,n}$
\end{tabular}
\end{center}
\caption{}
\label{fig:hirz}
\end{figure}
By definition
the coefficient of codegree $i$ of a Laurent polynomial $P(q)$ of
degree $d$ is its coefficient of degree $d-i$, and is
denoted by  $\coef[i]{P}$.

\begin{thm}\label{thm:maing}
  For any $i,g\in\Z_{\ge 0}$, the function
\[
\begin{array}{ccc}
\Z_{\ge 0}^3&\longrightarrow & \Z_{\ge 0}
\\ (a,b,n)& \longmapsto & \coef[i]{G_{\Delta_{a,b,n}}(g)}
\end{array}
\]
is polynomial on the set  $\mathcal U_{i,g}$
defined by
\[
\left\{\begin{array}{l}
n\ge 1
\\ b> i
\\ b+n> (g+2)i+g
\\ a\ge i+2g+2
\end{array}
\right. 
\]
and has degree $i+g$ in each of the variables
$b$ and $n$, and degree $i+2g$ in the variable $a$.
\end{thm}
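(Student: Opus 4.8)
The plan is to feed the floor diagram recipe of Theorem~\ref{thm:fd} into a template-type analysis, in the spirit of the node polynomial proof of Fomin--Mikhalkin, isolating which floor diagrams actually contribute to the codegree-$i$ coefficient.

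\emph{Step 1: reduction to near-maximal diagrams.} By Theorem~\ref{thm:fd} one has $G_{\Delta_{a,b,n}}(g)=\sum_\D\mu_q(\D)$, the sum ranging over the floor diagrams $\D$ of genus $g$ with Newton polygon $\Delta_{a,b,n}$, where $\mu_q(\D)$ is a product of (squares of) quantum integers indexed by the elevators of $\D$. The degree of $\mu_q(\D)$ is an explicit affine function of the elevator weights; its maximum over all such $\D$ is $\iota_{\Delta_{a,b,n}}-g$ (no cancellation occurs in the sum, all coefficients being nonnegative), and by Pick's formula $P_0(a,b,n):=\iota_{\Delta_{a,b,n}}-g$ is a polynomial of degree $1$ in $b$, $1$ in $n$ and $2$ in $a$. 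Hence $\coef[i]{G_{\Delta_{a,b,n}}(g)}$ equals the sum over all $\D$ of the coefficient of $q^{P_0-i}$ in $\mu_q(\D)$, and only the diagrams of \emph{degree defect} $P_0-\deg\mu_q(\D)\le i$ can contribute.

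\emph{Step 2: arithmetic of quantum integers.} Each factor of $\mu_q(\D)$ is a symmetric Laurent polynomial whose top coefficients stabilise: its codegree-$j$ coefficient is independent of the weight $w$ as soon as $w>j$. Consequently, for a floor diagram whose relevant elevator weights all exceed $i$, the coefficient of $q^{\deg\mu_q(\D)-j}$ in $\mu_q(\D)$ depends only on $j$ and on the number of elevators of $\D$. This is the mechanism that allows infinitely many weights to vary while the codegree-$i$ information stays finite.

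\emph{Step 3: standard diagram plus bounded templates.} This is the core of the argument and the main obstacle. One shows that on $\mathcal U_{i,g}$ every floor diagram of degree defect $\le i$ arises from a single \emph{standard} maximal diagram $\D^{\mathrm{st}}_{a,b,n}$ --- essentially one elevator between consecutive floors carrying the accumulated width --- by a number of local modifications bounded in terms of $i$ and $g$ only: ``genus templates'', which install the $g$ independent cycles (each consuming a bounded amount of degree budget), and ``codegree templates'', which lower or split elevator weights along short stretches of $\D^{\mathrm{st}}$ (each consuming at least one unit of degree defect). The four inequalities cutting out $\mathcal U_{i,g}$ are exactly what guarantees that $\D^{\mathrm{st}}_{a,b,n}$ exists and is the unique maximal diagram, that every elevator lying outside the bounded modified region has weight $>i$ so that Step~2 applies there, and that there is enough room --- enough floors, and enough width at the top and the bottom --- to insert the templates and slide them along $\D^{\mathrm{st}}$ without interactions among themselves or with $\partial\Delta_{a,b,n}$ beyond finitely many patterns. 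In particular the list of isomorphism types of such decorated diagrams is finite and independent of $(a,b,n)$. Establishing this uniform boundedness and locality, and checking that the thresholds of $\mathcal U_{i,g}$ are tight, is where essentially all the work lies; the delicate point is to disentangle the joint effect of the genus $g$ --- which by itself already consumes $g$ units of a codegree-type budget through its cycles --- from that of the codegree $i$.

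\emph{Step 4: counting and the degree.} For each of the finitely many decorated types $\tau$, the number of floor diagrams of type $\tau$ inside $\Delta_{a,b,n}$ is obtained by choosing where among the $a$ floors to place the templates and how to distribute the leftover width; this count is a product of binomial coefficients in $a,b,n$, hence polynomial on $\mathcal U_{i,g}$, and the contribution of $\tau$ to $\coef[i]{G_{\Delta_{a,b,n}}(g)}$ is that polynomial times a fixed rational constant supplied by Step~2. Summing over the finitely many $\tau$ gives polynomiality on $\mathcal U_{i,g}$. The degree is then read off from the dominant types: each genus template can be slid over a region of size $\sim\iota_{\Delta_{a,b,n}}$, contributing degree $1$ in $b$, $1$ in $n$ and $2$ in $a$, while each codegree template can be slid over a region contributing degree at most $1$ in each of $b$, $n$ and $a$; since in the leading term the placements of the $g+i$ templates are independent, the total degree is $i+g$ in $b$, $i+g$ in $n$ and $i+2g$ in $a$, and one verifies that no configuration of equal defect and genus achieves a larger degree. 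Apart from Step~3, this exact degree bookkeeping is the only other point requiring care.
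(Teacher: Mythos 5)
Your overall strategy mirrors the paper's: feed Theorem~\ref{thm:fd} into a Fomin--Mikhalkin style template analysis, restrict to floor diagrams of codegree at most~$i$, use the stabilization of the top coefficients of quantum integers (Corollaries~\ref{cor:key} and~\ref{cor:key2}), decompose into local building blocks, and count. The paper's Section~\ref{sec:polg} is exactly an implementation of this plan, so the route is the same in spirit. But your Step~3 --- which you yourself flag as ``where essentially all the work lies'' --- is a placeholder rather than a proof, and it is missing the one structural fact that makes the decomposition possible: on $\mathcal U_{i,g}$, every floor diagram of codegree at most~$i$ is \emph{layered}, i.e.\ $\preccurlyeq$ induces a total order on floors. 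This is Lemma~\ref{lem:layered} (built on Lemma~\ref{lem:min} and the operations $A^{\pm}$ of Lemma~\ref{lem:codegree}); without it there is no ``standard diagram $\D^{\mathrm{st}}_{a,b,n}$'' to speak of, no well-defined separating edges to delete, and no meaning to ``sliding templates along $\D^{\mathrm{st}}$''. The inequalities $b>i$, $a>i$, $b+n>(g+2)i+g$ defining $\mathcal U_{i,g}$ are used precisely here (and in Lemma~\ref{lem:pol fin}), which is why stating the threshold without deriving it is where your sketch stops being a proof.

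Two further points where you diverge from the argument. First, your separation into ``genus templates'' and ``codegree templates'' does not match the paper's decomposition: a single template (Definition~\ref{defi:template}) in general carries both positive genus \emph{and} positive codegree (see Figure~\ref{fig:ex temp}), and the bound ensuring finiteness, Lemma~\ref{lem:temp c+g} ($\codeg(\Theta)+g(\Theta)\ge l(\Theta)-1$), is precisely a combined bound; keeping genus and codegree in disjoint pieces would either miss configurations or count redundantly. Second, the degree bookkeeping in your Step~4 conflates two distinct mechanisms that the paper keeps separate in Lemma~\ref{lem:pol fin}: summing over the positions $\kappa$ of a closed template contributes degree~$1$ in $a$ alone (via Faulhaber), while the extra degree~$g$ in each of $a,b,n$ comes from summing over the weight distributions $\Omega$ --- distributing an elevator weight $\beta_{j,k}\sim an+b$ among the $g_{j,k}+1$ parallel short edges, contributing $\binom{\beta_{j,k}-1}{g_{j,k}}$. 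One must also control the degree of the marking count $\nu_{\Xi,M}$ (at most $\Card(E^{-\infty}(\Theta_1))+\Card(E^{+\infty}(\Theta_m))\le\codeg(\Xi)$ in each of $a,b,n$) and then use $m-2\le g$ to cap the degree in $a$ at $i+2g$; the lower bounds on the degree come from exhibiting the specific admissible collections at the end of Lemma~\ref{lem:pol fin}. Your heuristic ``sliding over a region of size $\sim\iota_{\Delta_{a,b,n}}$'' happens to produce the right total, but it does not distinguish these two mechanisms, which is what one must do to get the sharp degrees in each individual variable.
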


Theorem \ref{thm:maing} requires $n$ to be positive, and has the
    following version for $n=0$.
    
\begin{thm}\label{thm:maing n=0}
  For any $i,g\in\Z_{\ge 0}$, the function
\[
\begin{array}{ccc}
\Z_{\ge 0}^2&\longrightarrow & \Z_{\ge 0}
\\ (a,b)& \longmapsto & \coef[i]{G_{\Delta_{a,b,0}}(g)}
\end{array}
\]
is polynomial on the set defined by
\[
\left\{\begin{array}{l}
 b> (g+2)i+g
\\ a\ge i+2g+2
\end{array}
\right. 
\]
and has degree $i+g$ in each of the variables $a$ and $b$.
\end{thm}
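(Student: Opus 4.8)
The plan is to reduce Theorem \ref{thm:maing n=0} to Theorem \ref{thm:maing} by a limiting argument, exploiting the fact that the polygon $\Delta_{a,b,0}$ is a ``degenerate'' member of the family $\Delta_{a,b,n}$. First I would observe that the floor diagram recipe of Theorem \ref{thm:fd}, which computes $G_{\Delta_{a,b,n}}(g)$, depends on the polygon only through the sequence of floor sizes, and that the combinatorial data underlying $\Delta_{a,b,0}$ is obtained from that of $\Delta_{a,b,n}$ by collapsing the $n$ copies of the slanted edge. Concretely, for $n=0$ the polygon is the trapezoid with horizontal sides of length $b$ (top) and $b$... wait, rather top side $0$ when $b=0$; in general the $n=0$ trapezoid has parallel horizontal sides of lengths $b$ and $b$ — more precisely one should check from Figure \ref{fig:hirz}b) that $\Delta_{a,0,n}$ has bottom side $an$ and top side $0$, so $\Delta_{a,b,0}$ is the trapezoid with bottom side $b$, top side $b$, i.e.\ a parallelogram (a rectangle after an integral shear). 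The key structural point I would isolate is that on the region $b>(g+2)i+g$, $a\ge i+2g+2$, the floor diagrams contributing to the codegree-$i$ coefficient are ``localized'': all the complexity sits near the two horizontal sides, and the contribution is insensitive to whether the long horizontal extent is realized by a large $b$ with $n=0$ or split between $b$ and $n$.

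The cleanest route, which I expect to be the actual one, is to prove that $\coef[i]{G_{\Delta_{a,b,0}}(g)}$ equals the restriction to $n=0$ of the polynomial furnished by Theorem \ref{thm:maing}, or more precisely of a closely related polynomial. Since Theorem \ref{thm:maing} only asserts polynomiality for $n\ge 1$, I would first re-examine its proof: the polynomial $P_{i,g}(a,b,n)$ it produces is defined by an explicit combinatorial sum over a finite set of ``templates'' or ``decorated partitions'', and that sum makes sense and is genuinely polynomial for all integer values of $(a,b,n)$ in the shifted region, including $n=0$. So the substance of the argument is to show that the floor-diagram count for the $n=0$ trapezoid agrees with $P_{i,g}(a,b,0)$ on the stated domain. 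This is where the genuine work lies, and it is the step I expect to be the main obstacle: one must rule out ``extra'' floor diagrams that could appear (or disappear) precisely at $n=0$ because the slanted part of the boundary has degenerated. The hypothesis $b>(g+2)i+g$, strictly stronger than the $b>i$ needed in Theorem \ref{thm:maing}, should be exactly what is needed to absorb the missing room formerly provided by $n\ge 1$: it guarantees that any floor diagram of the right codegree still fits, with all its ``marked'' structure, strictly inside the horizontal part of the polygon, so that the bijection between the relevant $n=0$ diagrams and the relevant $n=1$ diagrams (shift everything away from the degenerate edge) is valid.

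Having established that identification, the degree statement follows immediately: by Theorem \ref{thm:maing} the polynomial $P_{i,g}(a,b,n)$ has degree $i+g$ in $b$, degree $i+g$ in $n$, and degree $i+2g$ in $a$; setting $n=0$ kills the $n$-dependence, and I would check that the coefficient of $b^{i+g}$ in $P_{i,g}(a,b,0)$ is not identically zero (this is inherited from the leading behavior in $b$, which the proof of Theorem \ref{thm:maing} computes explicitly and which does not involve $n$), giving degree exactly $i+g$ in $b$; by the symmetry of the parallelogram $\Delta_{a,b,0}$ under swapping its two pairs of parallel sides — equivalently, by re-deriving the count with the roles of the horizontal and ``vertical'' directions exchanged — the degree in $a$ is likewise $i+g$ rather than $i+2g$, the drop from $i+2g$ occurring because the top and bottom sides now have equal length so the two horizontal boundaries contribute symmetrically rather than independently. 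In writing this up I would present the $n\to n=0$ comparison as a lemma, prove it by the shift bijection on floor diagrams using $b>(g+2)i+g$, and then deduce both the polynomiality and the two degree assertions as corollaries, with the degree-in-$a$ claim requiring the extra symmetry observation specific to the $n=0$ case.
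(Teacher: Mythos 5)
Your proposal is in essence the same as the paper's proof, with one genuinely different ingredient. The paper's argument is exactly ``re-run the template machinery at $n=0$'': Lemmas \ref{lem:layered} and \ref{lem:decomp g} apply verbatim since they only use $a>i$ and $b>i$, and the only modification is in Lemma \ref{lem:pol fin}, where $\beta_{j,k}=(a-k_j-k+1)n+b-c_{j,k}$ collapses to $b-c_{j,k}$ at $n=0$, so the inner sum has degree at most $i-\codeg(\Xi)$ in $a$ rather than $i-\codeg(\Xi)+g$. Your instinct that one can reuse the polynomial produced by Theorem \ref{thm:maing} and check it still matches the floor-diagram count at $n=0$ is correct, and the hypothesis $b>(g+2)i+g$ is exactly what replaces $b+n>(g+2)i+g$ to keep all the binomial-coefficient identities in Lemma \ref{lem:pol fin} from truncating. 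One small misdirection: you frame the main obstacle as ruling out floor diagrams that ``appear or disappear'' at $n=0$, but no such phenomenon occurs---the decomposition into templates (Lemmas \ref{lem:layered}, \ref{lem:decomp g}) is identical for all $n\ge 0$ under $a,b>i$; the actual work is entirely in the degree bookkeeping of Lemma \ref{lem:pol fin}, not in the combinatorial inventory. Also, $\Delta_{a,b,0}$ is already an axis-aligned rectangle, not merely a parallelogram up to shear. Where your proof genuinely diverges from the paper's is in the degree-in-$a$ claim: the paper redoes the degree count directly, whereas you invoke the $GL_2(\Z)$-invariance of $G_{\Delta}(g)$ under the $90^\circ$ rotation exchanging $\Delta_{a,b,0}$ and $\Delta_{b,a,0}$, deducing $P(a,b)=P(b,a)$ from agreement on the (infinite, Zariski-dense) overlap of the two regions, so that degree in $a$ equals degree in $b$. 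That symmetry argument is valid, shorter than the paper's bound-chasing, and explains conceptually why the degree in $a$ drops from $i+2g$ to $i+g$; the paper does not use it.
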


In connection to the Göttsche conjecture (see Section \ref{sec:got}), one may also be interested in
fixing $b=0$ and $n\ge 1$, and varying $a$. Theorem \ref{thm:maing}
    can be adapted in this case.

\begin{thm}\label{thm:maing2}
  For any $i,g\in\Z_{\ge 0}$, and $n\in\Z_{>0}$, the function
\[
\begin{array}{ccc}
\Z_{\ge 0}&\longrightarrow & \Z_{\ge 0}
\\ a& \longmapsto & \coef[i]{G_{\Delta_{a,0,n}}(g)}
\end{array}
\]
is polynomial of degree $i+2g$ for $a\ge i+2g+2$.
\end{thm}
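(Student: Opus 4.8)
The plan is to deduce Theorem \ref{thm:maing2} as a limiting/degenerate case of the machinery developed for Theorem \ref{thm:maing}. The key observation is that $\Delta_{a,0,n}$ is an $h$-transverse polygon with the same left and right slopes as $\Delta_{a,b,n}$, so the floor-diagram formula of Theorem \ref{thm:fd} applies verbatim. I would first analyze how the floor-diagram count for $\Delta_{a,0,n}$ relates to the one for $\Delta_{a,b,n}$: setting $b=0$ removes the vertical part of the boundary on the upper left, which in the floor-diagram language corresponds to removing certain weight-$1$ unbounded edges (or, equivalently, reindexing the sequence of horizontal divisor directions). Concretely, the relevant floor diagrams are those whose underlying graph has at most $i+g$ edges beyond a ``minimal'' skeleton, and the codegree-$i$ coefficient is extracted as a fixed polynomial expression in the multiplicities of these finitely many combinatorial types. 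Since $a \ge i+2g+2$, the vertical extent of the polygon is large enough that the floors can be arranged freely, and the count stabilizes.

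Next I would carry out the actual polynomiality argument. The strategy mirrors the proof of Theorem \ref{thm:maing}: one shows that for each fixed codegree $i$ and genus $g$, only finitely many ``shapes'' of floor diagrams contribute to $\coef[i]{G_{\Delta_{a,0,n}}(g)}$, and each such shape can be ``spread out'' along the $a$-direction in a number of ways governed by counting lattice points in a dilated polytope (an Ehrhart-type count), while the quantum multiplicity of each shape contributes a fixed Laurent polynomial independent of $a$ once $a$ is large. The total contribution is then a sum, over the finitely many shapes, of (a fixed rational number depending on the shape) times (a polynomial in $a$ of degree at most the number of free floors). The condition $a \ge i+2g+2$ is precisely what guarantees that the combinatorial types are in bijection for all such $a$ and that no boundary effects truncate the count. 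Summing these contributions yields a polynomial in $a$.

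For the degree statement I would identify the shape(s) realizing the top-degree term: these are the floor diagrams with the maximal number of independently movable floors compatible with codegree $i$ and genus $g$, which contributes a term of degree exactly $i+2g$ in $a$ (the factor $2g$ arising from the genus contribution being distributed among floor positions, and the $i$ from the codegree slack), and I would check that the leading coefficients coming from distinct top shapes do not cancel — this non-cancellation is the one genuinely delicate point, and I expect it to follow from the positivity of quantum multiplicities at the relevant coefficient, exactly as in the proof of Theorem \ref{thm:maing}.

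The main obstacle I anticipate is not the polynomiality itself, which is essentially a finiteness-plus-Ehrhart argument, but rather pinning down the exact threshold $a \ge i+2g+2$ and verifying that the degree in $a$ is \emph{exactly} $i+2g$ rather than something smaller. This requires a careful bookkeeping of how the codegree of the quantum multiplicity interacts with the number of floors and the genus, and showing that the extremal configuration is actually attained within the allowed range of $a$; I would expect to reuse, essentially word for word, the corresponding lemmas established in the course of proving Theorem \ref{thm:maing}, specializing $b=0$ and tracking that the vanishing of the ``$b$-dependent'' part does not lower the $a$-degree. If those lemmas are stated with enough uniformity, Theorem \ref{thm:maing2} becomes a short corollary; otherwise a modest amount of re-derivation in the $b=0$ case will be needed.
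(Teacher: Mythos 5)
The high-level plan --- finitely many combinatorial ``shapes,'' each contributing a polynomial in $a$ once $a$ is large --- is the right one and matches the paper's template strategy, but your stated intention to ``reuse, essentially word for word, the corresponding lemmas established in the course of proving Theorem~\ref{thm:maing}, specializing $b=0$'' is exactly where the argument breaks down. The crucial Lemma~\ref{lem:layered}, which asserts that every floor diagram with Newton polygon $\Delta_{a,b,n}$ of codegree at most $i$ is layered (and hence admits the clean decomposition into an admissible collection of templates), explicitly requires $b > i$: its proof produces a lower bound of the form $\codeg(\D) \ge b + 2n$, which is useless when $b = 0$ and $n$ is small. When $b=0$ the top edge of $\Delta_{a,0,n}$ degenerates to a vertex, there are no sinks, and floor diagrams of small codegree may well branch near the top and fail to be layered. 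This is a genuinely new structural possibility, not a matter of ``removing weight-$1$ unbounded edges'' or ``reindexing the horizontal divisor directions.''

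The paper handles this by introducing \emph{capping templates} (Definition~\ref{def:capping templ}): weighted oriented acyclic graphs $\mathcal{C}$ with a unique minimal vertex $v_1$, such that $\mathcal{C}\setminus\{v_1\}$ has at least two minimal vertices, every non-minimal vertex has divergence $n$, together with a suitably normalized codegree. Lemma~\ref{lem:codeg capping templ} then shows that a capping template with $a$ vertices has codegree at least $n(a-2)$, which bounds the size of any possible ``top'' and makes the set of tops finite for fixed $i$, $g$, $n$. The proof of Theorem~\ref{thm:maing2} decomposes a non-layered floor diagram at the lowest floor $v_o$ above which branching occurs, identifies the piece above $v_o$ as a capping template of codegree at most $i$, and runs the template machinery of Section~\ref{sec:polg} on the layered lower part. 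Without this new ingredient your ``finitely many shapes'' claim is unjustified, so the polynomiality itself (not merely the threshold or the exact degree, which you flagged) is not established.
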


\begin{exa}\label{exa:g i=0}
Theorem \ref{thm:maing} may be seen as a partial generalisation of the
fact 
that for any convex integer polygon $\Delta$, one has
\[
\coef[0]{G_{\Delta}(g)}={{\iota_\Delta}\choose {g}}
\]
(see {\cite[Proposition
    2.11]{IteMik13} and \cite[Proposition 4.10]{BlGo14}}).  Indeed,
when $\Delta=\Delta_{a,b,n}$, this identity can be rewritten as
\[
\coef[0]{G_{\Delta_{a,b,n}}(g)}={{\frac{a^2n+2ab-(n+2)a -2b +2}{2}}\choose {g}},
\]
which is a polynomial of degree $g$ in the variables $b$ and $n$, and
of degree $g$ or $2g$ in the variable $a$ depending on whether $n=0$
or not.
\end{exa}

The particular case $g=0$ is much simpler to deal with, and the three
theorems above can be made more precise. Let us define
\[
\eta(\Delta)=\Card(\partial \Delta\cap\Z^2)-1.
\]
Since there is the additional
parameter $s$ in this case, one may also study polynomiality with
respect to $s$.
Note that the invariant  $G_\Delta(0;s)$ is non-zero if and only if
\[
s\in\left\{0,\cdots,\left[\frac{\eta(\Delta)}{2}\right]
\right\},
\]
in which case it has degree $\iota_\Delta$.

\begin{thm}\label{thm:main1}
For any $i\in\Z_{\ge 0}$, the function
\[
\begin{array}{ccc}
\Z^4&\longrightarrow & \Z_{\ge 0}
\\ (a,b,n,s)& \longmapsto & \coef[i]{G_{\Delta_{a,b,n}}(0;s)}
\end{array}
\]
is polynomial on the set $\mathcal U_{i}$ defined by 
\[
\left\{\begin{array}{l}
an+b\ge i+2s
\\ b>i
\\ a>i
\end{array}
\right. .
\]
Furthermore it has  degree $i$ in each of the variables
$a$, $b$,  $n$, and $s$.
\end{thm}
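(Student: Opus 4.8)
The plan is to prove Theorem~\ref{thm:main1} by extracting the coefficient $\coef[i]{G_{\Delta_{a,b,n}}(0;s)}$ from the floor-diagram formula of Theorem~\ref{thm:psi fd}, and then showing that on the chamber $\mathcal U_i$ only finitely many combinatorial types of floor diagrams contribute, each contributing a polynomial in $(a,b,n,s)$. First I would recall that, since $\Delta=\Delta_{a,b,n}$ is $h$-transverse with a single ``floor direction'', a floor diagram of genus $0$ computing $G_{\Delta_{a,b,n}}(0;s)$ is a certain decorated tree (or linear-like graph) whose floors carry elevations and whose multiplicity is a product of quantum integers $[w]_q=\frac{q^{w/2}-q^{-w/2}}{q^{1/2}-q^{-1/2}}$ over the edges, with a modified contribution at the $s$ marked $\psi$-points. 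The total degree of such a Laurent polynomial multiplicity is $\iota_\Delta$, realized by the ``maximal'' floor diagram; the codegree-$i$ coefficient of $G_{\Delta_{a,b,n}}(0;s)$ therefore only sees the top $i+1$ coefficients of each floor diagram's multiplicity.

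\smallskip
The key structural observation is a \emph{codegree truncation}: writing each quantum-integer factor $[w]_q = q^{(w-1)/2}(1+q^{-1}+\dots+q^{-(w-1)})$, a product $\prod_k [w_k]_q$ has degree $\frac12\sum_k(w_k-1)$, and its top $i+1$ coefficients depend only on \emph{how many} of the weights $w_k$ are ``large'' (say $\ge i+1$) together with the multiset of the small weights --- the large weights contribute a clean geometric-series tail that is insensitive to their exact value up to codegree $i$. Concretely, I would show that a floor diagram contributes to $\coef[i]{\cdot}$ only if its multiplicity has codegree-defect at most $i$ from the maximum, which forces all but a bounded number of its weights and all but a bounded portion of its shape to coincide with those of the extremal diagram; the ``long edges'' of weight roughly $a$ or $n$ or $b$ behave uniformly. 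This reduces the sum defining $\coef[i]{G_{\Delta_{a,b,n}}(0;s)}$ to a finite sum, indexed by the bounded ``defect data'', of terms each of which is (i) a binomial-type count of how to place the defect along long chains of edges --- giving polynomial dependence on $a,b,n,s$ via Vandermonde-style identities like $\sum_{j} \binom{j+c}{c}=\binom{\text{length}+c+1}{c+1}$ --- times (ii) a fixed rational number coming from the small part. The inequalities defining $\mathcal U_i$ ($an+b\ge i+2s$, $b>i$, $a>i$) are exactly what guarantees the long chains are long enough that no boundary effects spoil the polynomial formula and that the $s$ marked points fit without collision.

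\smallskip
The degree count in each variable then comes from tracking the Vandermonde contributions: the defect can be distributed along a chain whose length is an affine function of $a$ (resp.\ $b$, $n$, $s$), and distributing a bounded total defect $\le i$ among positions in a chain of length $\ell$ gives a polynomial of degree $\le i$ in $\ell$, hence degree $i$ in each of $a,b,n,s$; one exhibits a diagram achieving degree exactly $i$ to get the sharp statement. **The hard part will be** the combinatorial bookkeeping that makes the ``codegree truncation'' precise: one must organize floor diagrams of genus $0$ for $\Delta_{a,b,n}$ into a stable ``core'' plus variable ``long edges/chains'', prove uniformly that the high-codegree contributions of the long chains assemble into binomial coefficients in the chain lengths, and check that the $\psi$-point ($s$) decorations interact with this truncation in a controlled, again polynomial, way --- this is where the genus-$0$ hypothesis is used essentially, since it keeps the diagrams linear enough that the bounded-defect analysis does not interact with loops. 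I expect the bulk of the work (and the part deferred to later sections) to be setting up this ``defect decomposition'' as a general lemma about products of quantum integers indexed by $h$-transverse floor diagrams, from which Theorems~\ref{thm:maing}--\ref{thm:main1} all follow by specialization.
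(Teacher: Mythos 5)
Your proposal follows essentially the same strategy as the paper's proof: on the chamber $\mathcal{U}_i$ one classifies the finitely many genus-0 floor diagrams of codegree at most $i$ (Lemma~\ref{lem:d rat}, giving the diagrams $\D_{a,b,n,u,\widetilde u}$ indexed by the finite set $C_i$), exploits the insensitivity of codegree-$i$ coefficients of products of quantum squares to the exact values of weights exceeding $i$ (Corollary~\ref{cor:key}, yielding the polynomials $\Phi_i(k)$), and assembles the answer via a polynomial marking count compatible with the pairing (Lemma~\ref{lem:marking g=0}, yielding $\nu_{u,\widetilde u}$). One small correction to your sketch: the contribution of the ``stable small part'' in each term is not a fixed rational number but is itself a polynomial of positive degree in $a$, namely $\Phi_{i-\codeg(u,\widetilde u)}(a-1)$ of degree $i-\codeg(u,\widetilde u)$, and this is precisely where the degree-$i$ dependence on $a$ comes from.
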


Theorem \ref{thm:main1} is an easy-to-state version of
Theorem \ref{thm:main1 expl g0} where we also provide an explicit
expression for $ \coef[i]{G_{\Delta_{a,b,n}}(0;s)}$.
As in the higher genus case,  Theorem \ref{thm:main1} can be adapted to the case
when $b=0$ and $n$ is fixed. 

\begin{thm}\label{thm:main1 expl g02}
For any $(i,n)\in\Z_{\ge 0}\times\Z_{>0}$, the function
\[
\begin{array}{ccc}
\Z_{\ge 0}^2&\longrightarrow & \Z_{\ge 0}
\\ (a,s)& \longmapsto & \coef[i]{G_{\Delta_{a,0,n}}(0;s)}
\end{array}
\]
 is polynomial on the set defined by 
 \[
 \left\{\begin{array}{l}
 an\ge i+2s
 \\a\ge i+2
 \end{array}
\right. 
\]
Furthermore it has  degree $i$ in each of the variables
$a$ and $s$.
\end{thm}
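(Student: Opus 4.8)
The plan is to deduce Theorem~\ref{thm:main1 expl g02} from the already-established Theorem~\ref{thm:main1} (equivalently, from its explicit refinement Theorem~\ref{thm:main1 expl g0}) by specializing the parameter $b$ to $0$, exactly as Theorem~\ref{thm:maing2} is obtained from Theorem~\ref{thm:maing} in the higher genus case. The subtlety is that $b=0$ does \emph{not} lie in the open region $\mathcal U_i$ of Theorem~\ref{thm:main1}, since that region requires $b>i$; so one cannot simply plug in. First I would examine the explicit formula for $\coef[i]{G_{\Delta_{a,b,n}}(0;s)}$ provided by Theorem~\ref{thm:main1 expl g0}: it is a genuine polynomial $P_{i}(a,b,n,s)$, valid on $\mathcal U_i$. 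The floor diagram computation of Theorem~\ref{thm:psi fd} degenerates in a controlled way when $b=0$ (the polygon $\Delta_{a,0,n}$ of Figure~\ref{fig:hirz}b has no vertical left edge of positive length contributing the "$b$ extra columns"), and one checks that the combinatorial contributions which are killed by setting $b=0$ are precisely those of codegree $>i$ once $n\ge 1$ and $a\ge i+2$; hence on the stated region the sum defining $\coef[i]{G_{\Delta_{a,0,n}}(0;s)}$ coincides term-by-term with the $b=0$ specialization of the sum defining $\coef[i]{G_{\Delta_{a,b,n}}(0;s)}$ for $b\ge 1$.

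Concretely, the key steps, in order, are: (i) recall from Theorem~\ref{thm:main1 expl g0} the explicit polynomial expression for $\coef[i]{G_{\Delta_{a,b,n}}(0;s)}$, together with the description of which floor diagrams (or which markings) contribute to it; (ii) redo the relevant part of the floor diagram count of Theorem~\ref{thm:psi fd} for the polygon $\Delta_{a,0,n}$, observing that the diagrams contributing in codegree $\le i$ are the same as for $\Delta_{a,b,n}$ with the columns associated to $b$ removed, and that for $n\ge 1$, $a\ge i+2$ no new low-codegree contributions appear; (iii) conclude that $\coef[i]{G_{\Delta_{a,0,n}}(0;s)} = P_i(a,0,n,s)$ on the region $\{an\ge i+2s,\ a\ge i+2\}$, which is manifestly polynomial in $a$ and $s$; (iv) read off the degrees: since $P_i$ has degree $i$ in each of $a,b,n,s$ separately, its specialization at $b=0$ has degree $\le i$ in $a$ and in $s$, and one verifies the leading coefficients do not vanish — for $s$ this is inherited directly from Theorem~\ref{thm:main1}, and for $a$ one uses that the $b$-contributions were of strictly higher codegree so cannot have cancelled the top-degree-in-$a$ term.

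I expect the main obstacle to be step (ii): carefully matching the floor diagram (or marked floor diagram) contributions for $\Delta_{a,0,n}$ against those for $\Delta_{a,b,n}$ and verifying that nothing of codegree $\le i$ is lost or gained when $b$ is set to $0$, in the range $n\ge1$, $a\ge i+2$. This is the same bookkeeping that underlies the passage from Theorem~\ref{thm:maing} to Theorem~\ref{thm:maing2}, but one must check it is compatible with the extra $\psi$-class parameter $s$ and with the constraint $an\ge i+2s$ that replaces $an+b\ge i+2s$. Everything else — extracting the polynomial, specializing, and computing degrees — is routine once the explicit formula of Theorem~\ref{thm:main1 expl g0} is in hand; the degree bound $\le i$ is automatic and the non-vanishing of the leading coefficients follows from the genus-$0$ explicit expression exactly as in the $b\neq 0$ case.
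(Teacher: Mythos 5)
Your plan fails at exactly the step you flagged as the ``main obstacle,'' namely step~(ii), and the failure is not merely a bookkeeping nuisance but a substantive change in the combinatorics. For $b>i$, Lemma~\ref{lem:d rat} forces every floor diagram of codegree at most $i$ to be layered, hence of the explicit form $\D_{a,b,n,u,\widetilde u}$; this is what makes Theorem~\ref{thm:main1 expl g0} an exact closed formula. When $b=0$ this rigidity breaks: the lower bound $\codeg(\D)\ge b+n$ used to kill branching at the top degenerates, and floor diagrams with \emph{several maximal floors} now contribute in codegree $\le i$. These are not obtained from any $\D_{a,b,n,u,\widetilde u}$ by removing ``$b$ extra columns''; they are genuinely new. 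The paper handles them by introducing capping trees (Definition~\ref{def:capping}) and bounding their codegree from below (Lemma~\ref{lem:codeg capping}), which ensures there are only finitely many such top configurations for fixed $i$ and $n$. Your step~(iii), which concludes that $\coef[i]{G_{\Delta_{a,0,n}}(0;s)}=P_i(a,0,n,s)$, is therefore false.

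In fact the paper explicitly warns against your plan: it is stated just after Theorem~\ref{thm:maing2} (and illustrated in Examples~\ref{exa:coef g0} and \ref{exa:coef g0 bis}) that the polynomials of Theorems~\ref{thm:maing} and \ref{thm:maing2} are \emph{not} equal, i.e.\ the $b=0$ answer is not the $b\to 0$ specialization of the $b>i$ answer. Concretely, for $b>1$ one has $\coef[1]{G_{\Delta_{a,b,n}}(0;s)}=(n+2)a+2b+2-2s$, whose specialization at $b=0$, $n=1$ is $3a+2-2s$; but $\coef[1]{G_{\Delta_{d}}(0;s)}=3d+1-2s$. The discrepancy of $1$ is exactly the contribution of the codegree-$1$ capping tree (the three-floor tree with two maximal floors in Figure~\ref{degree 4 g=0}-style pictures) that your specialization misses. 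Your step~(iv) would then also be unreliable, since the missing capping-tree contributions carry the same top degree in $a$ and could a priori affect the leading coefficient. The correct route, which the paper follows, is to redo the codegree analysis from scratch for $\Delta_{a,0,n}$: establish that the unique minimal floor still forces the bottom of the diagram to look like $\D_{a,0,n,u,0}$, classify the finitely many possible tops as capping trees via Lemma~\ref{lem:codeg capping}, and then apply Corollary~\ref{cor:key2} and the marking count $\widetilde\nu_u(a,0,n,s)$ to each such shape.
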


As mentioned above, floor diagrams allow the computation of the invariants
$G_{\Delta}(g)$ and $G_{\Delta}(0;s)$ when $\Delta$ is an
$h$-transverse polygons. The polygons
$\Delta_{a,b,n}$ are $h$-transverse, but the converse may not be true.
We do not see any difficulty other than technical to generalize all
the statements above to the case of $h$-transverse polygons, in the
spirit of \cite{ArdBlo,BlGo14bis}. Since this paper is already quite
long and technical, we have restricted ourselves to the case of polygons
$\Delta_{a,b,n}$.
From an algebro-geometric perspective, these polygons correspond to
the toric surfaces
$\CP^2$,  the $n$-th Hirzebruch surface $\mathbb F_n$, and the
weighted projective plane $\CP^2(1,1,n)$.

\medskip
It emerges from Section \ref{sec:got} that
 polynomiality with respect to $s$ deserves a separate  study 
 from  polynomiality with respect to $\Delta$.
Clearly, the values $\coef[i]{G_\Delta(0;0)},\cdots,\coef[i]{G_\Delta(0;s_{max})}$
are interpolated by a polynomial of degree at most $s_{max}$, where
\[
s_{max}=\left[ \frac{\eta(\Delta)}{2}\right].
\]
It is nevertheless reasonable to expect,
at least for  ``simple'' polygons, 
this interpolation polynomial to be of degree  $\min(i,s_{max})$.
The next Theorem
states that this is indeed the case for small values of $i$.
Given a convex integer
polygon $\Delta\subset \R^2$, we denote by
$d_b(\Delta)$ the  length of the bottom horizontal edge of
$\Delta$. Note that $d_b(\Delta)=0$ if this edge is reduced to a point.

\begin{thm}\label{thm:poly s}
  Let $\Delta$ be an $h$-transverse polygon in $\R^2$.
  If
$2i\le d_b(\Delta)+1 $ and $i\le\iota_\Delta$, then the values
$\coef[i]{G_\Delta(0;0)},\cdots,\coef[i]{G_\Delta(0;s_{max})}$
are interpolated by a polynomial of degree $i$, whose leading
coefficient is $\frac{(-2)^i}{i!}$.
If $\Delta=\Delta_{a,b,n}$, then the result holds also for
$2i= d_b(\Delta)+2 $.
\end{thm}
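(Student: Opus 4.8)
The plan is to carry out the whole argument at the level of the floor diagram formula of Theorem~\ref{thm:psi fd}, which writes $G_\Delta(0;s)$ as a sum over marked genus-$0$ floor diagrams $\D$ with Newton polygon $\Delta$ equipped with $s$ distinguished $\psi$-markings, the refined multiplicity $\mu_q^s(\D)$ being a product over the floors and elevators of $\D$ in which the whole dependence on $s$ is concentrated in the $s$ local contributions of those $\psi$-markings. Since $G_\Delta(0;s)$ has degree $\iota_\Delta$ for every $s$ (the hypothesis $i\le\iota_\Delta$ only ensures that codegree $i$ is meaningful), computing $\coef[i]{G_\Delta(0;s)}$ amounts to controlling the top $i+1$ coefficients in $q$ of each $\mu_q^s(\D)$ and summing. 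The first step is thus to identify which diagrams $\D$ can contribute to codegree $i$: exactly as in the proofs of Theorems~\ref{thm:maing} and~\ref{thm:main1}, for small $i$ these diagrams are highly constrained, and I expect that under the hypothesis $2i\le d_b(\Delta)+1$ every contributing diagram coincides with a codegree-$0$ diagram except in a portion of combinatorial size $O(i)$ located over the bottom horizontal edge, in which in particular all $s$ $\psi$-markings lie on horizontal weight-one elevators attached to that edge.

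Granting this reduction, the second step is a purely local degree computation at a weight-one elevator carrying a $\psi$-marking. Comparing with an ordinary marking at the same place, I expect, up to terms of lower $q$-degree, two possibilities: the $\psi$-marking is either \emph{inert}, reproducing the top-degree behaviour of an ordinary marking, or \emph{active}, in which case it lowers the attained $q$-degree by exactly $1$ and multiplies the corresponding coefficient by $-2$. This is the combinatorial origin of the constant $-2$, and it is coherent with the fact that specialising $q=-1$ must recover Welschinger invariants counting real rational curves through $s$ pairs of complex conjugate points. Morally each active $\psi$-marking behaves like a genuine conjugate pair and occupies two consecutive integer points of the bottom edge, which is why $d_b(\Delta)\ge 2i-1$ is exactly the room needed to realise $i$ active markings on mutually non-interacting elevators while leaving the remaining part of the diagram of codegree $0$; for $\Delta=\Delta_{a,b,n}$ the bottom-right corner provides one additional slot, the source of the sharper bound $2i=d_b(\Delta)+2$ in the last sentence of the statement.

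The third step assembles these pieces. Splitting the $s$ $\psi$-markings of each contributing diagram into active and inert ones and reorganising the sum, one expects to land on an expression of the shape
\[
\coef[i]{G_\Delta(0;s)}=\sum_{j=0}^{i}c_j\binom{s}{j},
\]
valid for every $s\in\{0,\dots,s_{max}\}$, with $c_j$ independent of $s$ and recording, with weight $(-2)^j$, the contributions of the configurations having exactly $j$ active $\psi$-markings and base codegree $i-j$. The term $j=i$ forces the remaining diagram to have codegree $0$, whose total contribution is $\coef[0]{G_\Delta(0)}=\binom{\iota_\Delta}{0}=1$, so that $c_i=(-2)^i$; the terms $j<i$ involve smaller base codegrees and are controlled by the same analysis run with a smaller budget, equivalently by induction on $i$ together with the fact that $\coef[j]{G_\Delta(0;s)}$ is then already a polynomial of degree $j$ in $s$. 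The displayed formula is manifestly a polynomial in $s$ of degree $i$ with leading coefficient $c_i/i!=(-2)^i/i!$, and since the hypotheses force $s_{max}\ge i$, it is precisely the interpolation polynomial of the finitely many values $\coef[i]{G_\Delta(0;0)},\dots,\coef[i]{G_\Delta(0;s_{max})}$.

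The main obstacle is the first step: the rigid control of which floor diagrams reach codegree $i$ when $2i\le d_b(\Delta)+1$. One must show that no extra power of $s$ can be gained by moving $\psi$-markings onto elevators of weight greater than one or onto the non-horizontal part of a diagram, that active markings on distinct elevators genuinely contribute independently, and that non-maximality of a diagram for reasons unrelated to the $\psi$-markings only decreases the available $s$-degree. This is a delicate refinement of the degree estimates already carried out for Theorems~\ref{thm:maing} and~\ref{thm:main1}, and it is exactly where the threshold $2i\le d_b(\Delta)+1$, together with its improvement for $\Delta_{a,b,n}$, is used.
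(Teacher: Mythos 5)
Your plan is pointed in the right direction conceptually---the answer is indeed equivalent to showing that the expansion $\coef[i]{G_\Delta(0;s)}=\sum_{j=0}^i c_j\binom{s}{j}$ holds with $c_i=(-2)^i$, which is the same as showing (as the paper does) that the $i$th discrete derivative of the interpolation polynomial is constantly $2^i$. But as written the proposal is a sketch with one concretely incorrect intermediate claim and a real hole exactly where the work must be done.

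First, the assertion that in every contributing diagram ``all $s$ $\psi$-markings lie on horizontal weight-one elevators attached to the bottom edge'' cannot be right. By Theorem~\ref{thm:psi fd} one fixes a pairing $S$ of order $s$ of the index set $\{1,\dots,n(\Delta)\}$, and as $m$ ranges over all markings, the pairs $m(\{j,j+1\})$ for $\{j,j+1\}\in S$ sweep through the entire diagram; moreover once $s$ grows past roughly $d_b(\Delta)/2$ there simply is no room to park all of them on the bottom edge. In the paper's proof $S$ is chosen to be a pairing of $\{2i+1,\dots,n(\Delta)\}$, and the surviving contributions to the discrete derivative are identified by constraining only the \emph{first} $2i$ marks $m(\{1,\dots,2i\})$; the $s$ pairs of $S$ remain free to land anywhere compatible.

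Second, the ``active/inert'' dichotomy and the slogan that each active $\psi$-marking ``lowers the $q$-degree by one and multiplies by $-2$'' is a heuristic that does not match the actual bookkeeping. In the paper the $(-1)^i$ arises from the sign in the discrete derivative $P^{(i)}(s)=\sum_l(-1)^l\binom{i}{l}P(s+l)$, and the $2^i$ arises from counting the sets $J\subset\{1,\dots,2i\}$ containing exactly one element of each pair $\{2k-1,2k\}$; a factor $2$ of the flavor you describe (the codegree-one coefficient of $[w]^2(q)-[w](q^2)$) shows up only in one boundary subcase of Step~3, not as the universal mechanism.

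Third, the crux of the theorem---the $s$-independence of the $c_j$, equivalently the constancy of $a_i^{(i)}(s)$---is precisely what you defer in your last paragraph. The paper achieves this via the inclusion-exclusion $\kappa(\D,m)=\sum_{I\subset\{1,\dots,i\}}(-1)^{|I|}\mu_{S^I}(\D,m)$, a cancellation lemma (Step~1: $\kappa(\D,m)=0$ whenever $J\cup\{i_0\}$ contains a full pair $\{2k-1,2k\}$), and a codegree case analysis (Steps~2 and~3). Nothing in your sketch substitutes for those three ingredients, so as it stands this is a plausible plan but not a proof.
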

Observe that even when
$\Delta=\Delta_{a,b,n}$, Theorem \ref{thm:poly s} cannot be deduced from  
Theorems \ref{thm:main1} or from  
Theorems \ref{thm:main1 expl g02}.
Since
the proof of Theorem \ref{thm:poly s} does not seem  easier when
 restricting
to polygons $\Delta_{a,b,n}$ for $2i\le d_b(\Delta)+1$, we provide a proof valid for any
$h$-transverse polygon.
We expect that the upper bounds $2i\le d_b(\Delta)+1$ and
$2i\le d_b(\Delta)+2$ can be weakened;
nevertheless the proof via floor diagrams becomes more and more
intricate as $i$ grows, as is visible in our proof of Theorem \ref{thm:poly s}.

\subsection{Connection to complex and real enumerative geometry}\label{sec:got}
Let  $N_{\CP^2}^\delta(d)$ be the number of
irreducible algebraic curves of degree $d$, with $\delta$ nodes, and
passing through a generic configuration of $\frac{d(d+3)}{2}-\delta$
points in $\CP^2$. 
For a fixed $\delta\in\Z_{\ge 0}$, this number is polynomial in $d$ of
degree $2\delta$
for $d\ge \delta +2$. For example, one
has
\begin{align*}
 & \forall d\ge 1, \ N_{\CP^2}^0(d)=1
\\&  \forall d\ge 3,\ N_{\CP^2}^1(d)=3(d-1)^2
\\ & \forall d\ge 4,\ N_{\CP^2}^2(d)=\frac 3 2(d - 1)(d - 2)(3d^2 - 3d - 11)
\end{align*}
These \emph{node polynomials} have a long history. After some
computations for small values of $\delta$, they were
conjectured to exist for any $\delta$ by Di Francesco and Itzykson in \cite{DiFrIt}.
By around 2000,
they were computed up to $\delta=8$, see \cite{KlePie04} and references
therein for an historical account.
Göttsche proposed in \cite{Got98} a more general conjecture: given a
non-singular complex algebraic surface $X$,  a non-negative integer
$\delta$, and a line bundle $\mathcal L$
on $X$ that is sufficiently ample with respect to $\delta$, the number
$N_X^\delta(\mathcal L)$ of
irreducible algebraic curves in the linear system $|\mathcal L|$, with $\delta$ nodes, and
passing through a generic configuration of
$\frac{\mathcal L^2 + c_1(X)\cdot\mathcal L}{2}-\delta$
points in $X$ equals
$P_\delta(\mathcal L^2, c_1(X)\cdot\mathcal L,c_1(X)^2,c_2(X))$,
with $P_\delta(x,y,z,t)$ a  universal polynomial depending only on $\delta$. 

The Göttsche conjecture was proved in full generality by Tzeng in
\cite{Tze12}, and an alternative proof was
proposed shortly thereafter in \cite{KST11}. Both proofs use
algebro-geometric methods.
Fomin and Mikhalkin gave in \cite{FM} a combinatorial proof of the Di
Francesco-Itzykson conjecture
by mean of floor diagrams. This was  generalized by Ardila and Block 
in \cite{ArdBlo}
to a proof of the Göttsche conjecture restricted to the case when $X$ is
the toric surface associated to an $h$-transverse polygon.
Ardila and Block's work contains
an interesting outcome:  combinatorics
allows one to transcend from the original realm of the Göttsche
conjecture, and to consider  algebraic surfaces with mild
singularities as well. We are not aware of any algebro-geometric
approach to the Göttsche conjecture in the case of singular surfaces. 

Motivated by the paper \cite{KST11}, Göttsche and Shende defined in \cite{GotShe12} a
$\chi_y$-refined version of the numbers $N_X^\delta(\mathcal L)$. In the case
when $X$ is the toric surface  $X_\Delta$ associated to the polygon $\Delta$,
these refined invariants  are
conjecturally equal to the refined tropical invariants
$G_\Delta(\frac{\mathcal L^2 - c_1(X_\Delta)\cdot\mathcal L+2}{2}-\delta)$
that were simultaneously defined by Block and Göttsche in
\cite{BlGo14}.
In light of  the Göttsche conjecture,
it is reasonable to expect  the coefficients of
$G_\Delta(\frac{\mathcal L^2 - c_1(X_\Delta)\cdot\mathcal L+2}{2}-\delta)$ to be
asymptotically 
polynomial with respect to  $\Delta$.
Block and
Göttsche adapted in \cite{BlGo14}
 the methods from \cite{FM,ArdBlo} to show
that this is indeed the case.

\medskip
In all the story above, the parameter $\delta$ is fixed and the line bundle
$\mathcal L$ varies. In other words, we are enumerating algebraic
curves with a fixed number of nodes in a varying linear system.
In particular, the genus of the curves under enumeration in
the linear system $d\mathcal L$ grows quadratically with respect to $d$.
In a kind of dual setup, one may  fix the genus of curves under
enumeration. For example one may consider the numbers
$N^{\frac{(d-1)(d-2)}{2}-g}_{\CP^2}(d)$ in the case of $\CP^2$, and
let $d$ vary. However in this case it seems hopeless to seek for  any
polynomiality behavior. Indeed, the sequence 
$N^{\frac{(d-1)(d-2)}{2}-g}_{\CP^2}(d)$ tends to infinity  more than
exponentially fast. 
This has been proved by Di Francesco and Itzykson in
\cite{DiFrIt} when $g=0$, and the general case can be  obtained for example by an
easy adaptation of the proof of Di Francesco and Itzykson's result
via floor diagrams proposed in \cite{Br6b,Br10}.

Nevertheless, our results can be interpreted as a reappearance of the
Göttsche conjecture
at the refined level: coefficients of small codegrees of
$G_{\Delta_{a,b,n}}(g)$ behave polynomially asymptotically with respect to $(a,b,n)$.
It is somewhat reminiscent of the Itenberg-Kharlamov-Shustin
conjecture \cite[Conjecture 6]{IKS2}: although it has been shown to be wrong in
\cite{Wel4,Br8}, its refined version turned out to be true by
\cite[Corollary 4.5]{Bru18} and Corollary \ref{cor:decreasing} below.
Anyhow, it may be interesting to understand further this reappearance of 
the Göttsche conjecture.

In the same range of ideas, it may be worthwhile to investigate the existence
of universal polynomials
giving  asymptotic values of 
$\coef[i]{G_{\Delta_{a,b,n}}(g)}$. It follows from Examples
\ref{exa:coef g0} and \ref{exa:coef g0 bis} that the polynomials whose
existence is attested in Theorems \ref{thm:maing} and
\ref{thm:maing2} are not equal.
Nevertheless, 
we do not know whether there
exists  a universal polynomial $Q_{g,i}(x,y,z,t)$ such that,
under the assumption that the toric
surface $X_{\Delta_{a,b,n}}$ is non-singular, the
equality
\[
\coef[i]{G_{\Delta_{a,b,n}}(g)}=Q_{g,i}(\mathcal L_{a,b,n}^2,
c_1(X_{\Delta_{a,b,n}})\cdot\mathcal L_{a,b,n},c_1(X_{\Delta_{a,b,n}})^2,c_2(X_{\Delta_{a,b,n}}))
\]
holds in each of the three regions described in Theorems
\ref{thm:maing},
\ref{thm:maing n=0}, and
\ref{thm:maing2}. In the expression above $\mathcal L_{a,b,n}$ denotes
the line bundle on $X_{\Delta_{a,b,n}}$ defined by $\Delta_{a,b,n}$.
As explained in \cite[Section 1.3]{ArdBlo}, it is unclear what
should generalize the four intersection numbers in the formula above 
 when $X_{\Delta_{a,b,n}}$ is singular.
Recall that the surface $X_{\Delta_{a,b,n}}$ is non-singular precisely 
when $b\ne 0$ or $n= 1$, in which case one has
\[
\mathcal L_{a,b,n}^2=a^2n+2ab,
\qquad
c_1(X_{\Delta_{a,b,n}})\cdot\mathcal L_{a,b,n}=(n+2)a+2b,
\]
and
\[
c_1(X_{\Delta_{a,b,n}})^2=8 \mbox{ and } c_2(X_{\Delta_{a,b,n}})=4 \quad\mbox{if }b\ne 0,
\qquad \qquad c_1(X_{\Delta_{a,0,1}})^2=9 \mbox{ and } c_2(X_{\Delta_{a,0,1}})=3.
\]
It follows from the adjunction formula
combined with Pick's formula that
\[
\iota_{\Delta_{a,b,n}}= \frac{\mathcal
           L_{a,b,n}^2-c_1(X_{\Delta_{a,b,n}})\cdot\mathcal
           L_{a,b,n}+2}{2}.
\]
As a consequence, for $i=0$, the universal polynomials $Q_{g,0}$ exist and are given by
\[
Q_{g,0}(x,y,z,t)={{\frac{x-y+2}{2}}\choose{g}}.
\]
At the other extreme, Examples \ref{exa:coef g0} and
\ref{exa:coef g0 bis} suggest that $Q_{0,i}$ may not depend on $x$.

\bigskip
If this kind of ``dual'' Göttsche conjecture phenomenon may come as a
surprise, 
polynomiality with respect to $s$ of
$\coef[i]{G_{\Delta_{a,b,n}}(0;s)}$ is quite expected. It
is also related to complex and real enumerative geometry, and
pleads  in favor of a more geometric
definition of refined tropical invariants as conjectured, for example, in  \cite{GotShe12}.
Given a real projective algebraic surface $X$, we denote by 
$W_{X}(d;s)$ the Welschinger invariant  of $X$ 
counting (with signs) real  $J$-holomorphic rational curves  realizing
the class $d\in H_2(X;\Z)$, and
 passing through a generic
real configuration of $c_1(X)\cdot d-1$
  points in $X$ containing exactly 
  $s$ pairs of complex conjugated points (see \cite{Wel1,Bru18}).
  Welschinger exhibited in \cite[Theorem 3.2]{Wel1} a very simple
  relation between Welschinger invariants of a real algebraic surface $X$ and
  its blow-up $\widetilde X$ at a real point, with exceptional
  divisor $E$:
  \begin{equation}\label{eq:w rel}
  W_{X}(d;s+1)=W_{X}(d;s) - 2W_{\widetilde X}(d -2[E];s).
  \end{equation}
This equation is also obtained in \cite[Corollary 2.4]{Bru18} as a special
case of a formula relating Welschinger invariants of real surfaces
differing by a surgery along a real Lagrangian sphere. 
As suggested in \cite[Section 4]{Bru18},
it is reasonable to expect
that  such formulas admit a refinement.
The refined Abramovich-Bertram formula \cite[Corollary 5.1]{Bou19}, proving
\cite[Conjecture 4.6]{Bru18},
provides a piece of evidence for such
expectation.
Hence one may expect that a refinement of
  formula $(\ref{eq:w rel})$ holds both for tropical refined invariants from
  \cite{BlGo14,GotSch16}
  and for
  $\chi_y$-refined invariants from \cite{GotShe12}.
  
  As mentioned earlier, one has
  \[
G_{\Delta}(0;s)(-1)=  W_{X_\Delta}(\mathcal L_\Delta;s)
  \]
  when $X_\Delta$ is an
  unnodal del Pezzo surface. In particular
  \cite[Proposition 4.3]{Bru18} and 
 Proposition \ref{prop:wel} below state precisely that the refinement
of formula $(\ref{eq:w rel})$
holds true in the tropical set-up when both $X_\Delta$ and
$\widetilde{X_\Delta}$ are  unnodal toric del Pezzo surfaces.

In any event, reducing inductively to $s=0$,
one sees easily that $\coef[i]{G_{\Delta}(d,0;s)}$ is polynomial of
degree $i$ in $s$ if one takes for granted that
\begin{itemize}
  \item  tropical refined invariants $ G_{\Delta}(0;s)$ generalize
    to some $\chi_y$-refined tropical invariants
    $G_{X,\mathcal L}(0;s)$, where $X$ is an arbitrary projective
    surface and $\mathcal L\in Pic(X)$ is a line bundle;
\item $G_{X,\mathcal L}(0;s)$ is a symmetric Laurent series of degree
  $\frac{\mathcal L^2-c_1(X)\cdot \mathcal L+2}{2} $
  with leading coefficient equal to 1;
\item a refined version of formula $(\ref{eq:w rel})$ holds for
  refined invariants $G_{X,\mathcal L}(0;s)$.  
\end{itemize}
Since none of the last three conditions are established yet, Theorem
\ref{thm:poly s} may be seen as an evidence that these conditions
actually hold.

\bigskip
To end this section, note that  all the mentioned asymptotical problems
require one to fix either the number $\delta$ of nodes
of the curves under enumeration, or their genus $g$.
These two numbers are related by the adjunction formula
\[
g+\delta = \frac{\mathcal L^2-c_1(X)\cdot \mathcal L+2}{2}.
\]
One may wonder whether these  asymptotical results generalize when both
$g$ and $\delta$ are 
allowed  
to vary, as long as they satisfy the equation above.

\subsection{Some explicit computations in genus 0}\label{sec:comp}

Here we present a few  computations that illustrates Theorems \ref{thm:main1 expl g0},
\ref{thm:main1 expl g02}, and \ref{thm:poly s}, and which, in the light of Section
\ref{sec:got}, may point towards interesting directions.

\begin{exa}\label{exa:coef g0}
Theorem  \ref{thm:main1 expl g0} allows one to compute 
$\coef[i]{G_{\Delta_{a,b,n}}(0;s)}$ for small values of $i$.
For example
one computes easily that
 (recall that the sets $\mathcal U_i$ are defined in the statement of Theorem \ref{thm:main1})
 \[
 \forall (a,b,n)\in\mathcal U_{1},\  \coef[1]{G_{\Delta_{a,b,n}}(0;s)}=(n+2)a+2b+2-2s.
  \]
In relation to the Göttsche conjecture, one may try to express
$\coef[i]{G_{\Delta_{a,b,n}}(0;s)}$
in terms of topological numbers related to the linear
system $\mathcal L_{a,b,n}$ defined
by the polygon $\Delta_{a,b,n}$ in the Hirzebruch surface
$X_{\Delta_{a,b,n}}=\mathbb F_n$. Surprisingly, the values of
$\coef[i]{G_{\Delta_{a,b,n}}(0;s)}$ we compute can be expressed in
terms of $c_1(\mathbb F_n)\cdot \mathcal L_{a,b,n}=(n+2)a+2b$ and $s$
only.
Furthermore expressing these values in terms of the number of real points  rather
than in terms of the number $s$ of pairs of complex conjugated points
simplifies even further the final expressions.
More
precisely,  setting $y=(n+2)a+2b$ and $t=y-1-2s$, we obtain
\begin{align*}
&  \coef[0]{G_{\Delta_{a,b,n}}(0;s)}=1
 \\&\coef[1]{G_{\Delta_{a,b,n}}(0;s)}=t+3
 \\&\coef[2]{G_{\Delta_{a,b,n}}(0;s)}=\frac{t^2 + 6t + y + 19}{2}
 \\&\coef[3]{G_{\Delta_{a,b,n}}(0;s)}=\frac{t^3 + 9t^2 + (3y + 59)t + 9y + 147}{3!}
 \\&\coef[4]{G_{\Delta_{a,b,n}}(0;s)}=\frac{t^4 + 12t^3 + (6y + 122)t^2 + (36y + 612)t + 3y^2 + 120y + 1437}{4!}
 \\&\coef[5]{G_{\Delta_{a,b,n}}(0;s)}=\frac{1}{5!}\times \left(
 t^5 + 15t^4 + (10y + 210)t^3 + (90y + 1590)t^2 + (15y^2 + 620y +
 7589)t\right.
 \\ & \qquad\qquad\qquad\qquad\qquad\quad\left. + 45y^2 + 1560y + 16035\right)
 \\&\coef[6]{G_{\Delta_{a,b,n}}(0;s)}=\frac{1}{6!}\times \left(
 t^6 + 18t^5 + (15y + 325)t^4 + (180y + 3300)t^3 + (45y^2 + 1920y +
 24019)t^2\right.
 \\ & \qquad\qquad\qquad\qquad\qquad\quad\left.
 + (270y^2 + 9720y + 102522)t + 15y^3 + 945y^2 + 23385y + 207495\right)
\end{align*}
for any $(a,b,n,s)$ in the corresponding $\mathcal U_{i}$.
It appears from these computations that the polynomial
$\coef[i]{G_{\Delta_{a,b,n}}(0;s)}$
has total degree $i$ if $t$ has degree 1 and $y$ and degree 2. In
addition, 
its coefficients
seem to be all positive and  to also have some polynomial
behavior with respect to $i$:
\[
i!\times \coef[i]{G_{\Delta_{d}}(0;s)}=
t^i+ 3it^{i-1} +  \frac{i(i-1)}{6}\left( 3y+ 2i +53\right)t^{i-2}
+
\frac{i(i-1)(i-2)}{2}\left( 3y +2i + 43\right )t^{i-3}
+
\cdots 
\]
It could be interesting to study further these observations.
\end{exa}

\begin{exa}\label{exa:coef g0 bis}
Throughout 
the text, we  use the more common notation $\Delta_d$ rather than $\Delta_{d,0,1}$.
  It follows from Theorem \ref{thm:main1 expl g02} combined with
  Examples \ref{exa:fd cubic s} and \ref{exa:G quartic} that
  \[
  \forall d\ge 3,\ \coef[1]{G_{\Delta_{d}}(0;s)}=3d+1-2s.
  \]
  Further computations allow one to compute
  $\coef[i]{G_{\Delta_{d}}(0;s)}$ for the first values of $i$. Similarly to
  Example  \ref{exa:coef g0},
  it is interesting  to express
$\coef[i]{G_{\Delta_{d}}(0)}$
  in terms of $y=3d=c_1(\CP^2)\cdot d\mathcal L_1$ and $t=y-1-2s$:
  \begin{align*}
 &\forall d\ge 3,\  \coef[1]{G_{\Delta_{d}}(0;s)}=t+2
\\ &  \forall d\ge 4,\  \coef[2]{G_{\Delta_{d}}(0;s)}=   \frac{t^2 + 4t + y + 11}{2}
 \\&\forall d\ge 5,\  \coef[3]{G_{\Delta_{d}}(0;s)}=\frac{t^3 + 6t^2 + (3y + 35)t + 6y + 72}{3!}
 \\&\forall d\ge 6,\  \coef[4]{G_{\Delta_{d}}(0;s)}=\frac{t^4 + 8t^3 + (6y + 74)t^2 + (24y + 304)t + 3y^2 + 72y + 621}{4!}
 \\&\forall d\ge
 7,\  \coef[5]{G_{\Delta_{d}}(0;s)}=\frac{1}{5!}\times\left(
 t^5 + 10t^4 + (10y + 130)t^3 + (60y + 800)t^2 + (15y^2 + 380y +
 3349)t \right.
 \\ & \qquad\qquad\qquad\qquad\qquad\qquad\quad \left. + 30y^2 + 780y + 6030\right)
  \end{align*}
We observe the same phenomenon for
  the coefficients of the polynomial
  $\coef[i]{G_{\Delta_{d}}(0;s)}$ as in Example \ref{exa:coef g0}. In particular
  they
  seem  to have some polynomial
behavior with respect to $i$:
\[
i!\times \coef[i]{G_{\Delta_{d}}(0;s)}=
t^i+ 2it^{i-1} +   \frac{i(i-1)}{6}\left(3y+ 2i +29\right)t^{i-2}
+
\frac{i(i-1)(i-2)}{3}\left( 3y +2i + 30\right )t^{i-3}
+
\cdots 
\]
\end{exa}

\begin{exa}
For $n\ge 2$, one computes easily that 
  \[
  \coef[1]{G_{\Delta_{2,0,n}}(0;s)}=2n+2-2s=c_1(\mathbb F_n)\cdot \mathcal L_{2,0}-2s.
  \]
In particular, one notes a discrepancy with the case of $\CP^2$, i.e.
when $n=1$. This originates probably from the fact that the toric complex
algebraic surface $X_{\Delta_{a,0,n}}$ is singular
as soon as $n\ge 2$.
\end{exa}

\begin{exa}
Here we illustrate Theorem \ref{thm:poly s} in the case of $\Delta_4$.
  According to Example \ref{exa:G quartic} and setting $t=11-2s$, one has
\begin{align*}
G_{\Delta_4}(0;s)=& q^{-3} & +& (2+t) q^{-2} &+&
\frac{t^2+4t+23}{2} q^{-1} &+&
\frac{t^3 + 3t^2 + 59t + 81}{6}+
\\ & q^3  &+&  (2+t) q^{2}&+&\frac{t^2+4t+23}{2} q \ .
\end{align*}
In particular one has
\[
\coef[3]{G_{\Delta_{4}}(0;s)}=\frac{t^3 + 3t^2 + 59t + 81}{6}
\ne \frac{t^3 + 6t^2 + (3\times 12 +35)t + 6\times 12+ 72}{6}
\]
 meaning that the  threshold $d\ge i+2$ in Example \ref{exa:coef g0 bis} is sharp.
\end{exa}

\subsection{Method and outline of the paper}
Fomin and Mikhalkin were the first to use floor diagrams
to address the Göttsche conjecture in \cite{FM}. 
The basic strategy, further developed by Ardila and Block in 
\cite{ArdBlo}, is to 
 decompose floor diagrams into elementary building blocks, 
called \emph{templates}, that are suitable for a combinatorial treatment. 
Even though the basic idea 
to prove Theorems
\ref{thm:maing}, \ref{thm:maing n=0}, and \ref{thm:maing2}, is to
follow this
overall strategy, the
building blocks and their combinatorial treatment we need here differ from
those used in \cite{FM,ArdBlo}.

However in the special case when $g=0$, the situation simplifies
drastically, and there is no need of the templates machinery to prove
Theorems  \ref{thm:main1} and
\ref{thm:main1 expl g02}.
Indeed, one can
easily describe all floor diagrams coming into play, and perform a
combinatorial study by hand. In particular, we are able to
 provide an explicit
expression for $ \coef[i]{G_{\Delta_{a,b,n}}(0;s)}$ in Theorem \ref{thm:main1 expl g0}.

On the other hand, we  use a different
strategy than the one from \cite{FM,ArdBlo} to tackle polynomiality
with respect to  $s$ when $\Delta$ is fixed.
We prove Theorem \ref{thm:poly s}
by establishing that the sequence $(\coef[i]{G_\Delta(0;s)})_s$ is interpolated
by a polynomial whose $i$th discrete derivative (or $i$th difference)
is constant.

\medskip
The remaining part of this paper is organized as follows. We start by
recalling the definition of floor diagrams in Section \ref{sec:floor},
and how to use them to compute tropical refined invariants of $h$-transverse
polygons.
In particular, Theorems  \ref{thm:fd}
and \ref{thm:psi fd} may be considered as  definitions of these
invariants for readers unfamiliar with tropical geometry.
We collect some general facts about codegrees that will be used
throughout the text in Section \ref{sec:codeg}. In Section
\ref{sec:g=0},
we prove polynomiality
results for tropical refined invariants in genus 0. We first treat the
very explicit case when $\Delta=\Delta_{a,b,n}$ with $b\ne 0$, before turning
 to the slightly more technical situation when $b$ vanishes. We end
 this section by proving polynomiality with respect to $s$ alone with
 the help of
 discrete derivatives.
Lastly,  Section \ref{sec:gen} is devoted to higher genus and
becomes more technical. We define
a suitable notion of templates, and adapt the proofs from Section 
\ref{sec:g=0} in this more general situation.
Some well-known or easy identities on quantum numbers are recast
in Appendix \ref{app:quantum int} in order to ease the reading of the text.

\bigskip
\noindent{\bf Acknowledgment.}
We are grateful to Gurvan Mével, as well as two anonymous
referees, for pointing us out several typos and
inaccuracies  in
a first version of this paper. 
This  work is
partially supported by the grant TROPICOUNT of Région Pays de la
Loire, and the ANR project ENUMGEOM NR-18-CE40-0009-02.

\section{Floor diagrams}\label{sec:floor}


\subsection{$h$-transverse polygons}

The class of $h$-transverse polygons enlarges slightly the class of
polygons $\Delta_{a,b,n}$.
\begin{defi}\label{df:ht}
A convex integer polygon $\Delta$ is called \emph{$h$-transverse} if
every edge contained in its boundary $\partial \Delta$ is either horizontal,
vertical, or has slope 
$\frac{1}{k}$, with $k\in\Z$. 
\end{defi}
Given an $h$-transverse polygon $\Delta$, we use the following
notation:
\begin{itemize}
 \item 
$\partial_l\Delta$ and $\partial_r\Delta$
 denote  the sets of 
edges~$e\subset\partial \Delta$ with an external normal vector
having negative and positive first
coordinate, respectively;

\item  $d_l\Delta$ and $d_r\Delta$ denote the unordered lists of integers~$k$ 
appearing~$j\in\Z_{>0}$ times, such that the vector $(jk,-j)$ belongs to $\partial_l\Delta$ and $\partial_r\Delta$, respectively, with $j$ maximal; 

\item $d_b\Delta$ and $d_t\Delta$
denote the  lengths  of the horizontal edges at the bottom
and top, respectively, of $\Delta$.
\end{itemize}

Note that both sets $d_l\Delta$ and $d_r\Delta$ have
the integer height of $\Delta$ for cardinality.

\begin{exa}
As said above, all polygons $\Delta_{a,b,n}$ are
$h$-transverse. Recall that we use the  notation $\Delta_d$ instead of $\Delta_{d,0,1}$.
  We depict in Figure~\ref{fig:htrans}  two examples of $h$-transverse
polygons,  where the integer close to a
  segment in $\partial_l \Delta$ or  $\partial_r \Delta$  denotes its
  contribution to $d_l\Delta$ or $d_r \Delta$, respectively.
\end{exa}

\begin{figure}[h]
\begin{center}
\begin{tabular}{ccc}
  \includegraphics[height=3cm]{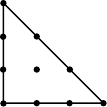}
   \put(-95,65){$0$}
 \put(-95,38){$0$}
  \put(-95,12){$0$}
  \put(-60,65){$1$}
  \put(-35,40){$1$}
  \put(-10,15){$1$}
  &
  \hspace{8ex} &
  \includegraphics[height=3cm]{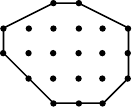}
     \put(-99,75){$-2$}
 \put(-113,50){$0$}
 \put(-103,25){$1$}
  \put(-82,6){$1$}
  \put(-21,75){$2$}
  \put(3,50){$0$}
  \put(3,30){$0$}
  \put(-10,5){$-1$}

 \\ \\a) $\begin{array}{l}d_l\Delta_3=\{0,0,0\},\\ d_r\Delta_3=\{1,1,1\},\\ d_b\Delta_3=3,\\ d_t\Delta_3=0.
  \end{array}$& &  b)
$\begin{array}{l}
   d_l\Delta=\{-2,0,1,1\},\\ d_r\Delta=\{2,0,0,-1\},\\ d_b\Delta=2,\\ d_t\Delta=1.\end{array}$
\end{tabular}
\end{center}
\caption{Examples of $h$-transverse polygons.}
\label{fig:htrans}
\end{figure}

\subsection{Block-Göttsche refined invariants via floor diagrams}

In this text, an oriented multigraph $\Gamma$ consists of
a
set of vertices
$V(\Gamma)$, a collection
$E^0(\Gamma)$ of oriented
bivalent edges
in $V(\Gamma)\times V(\Gamma)$
and two collections of  monovalent edges:
a collection of sources
$E^{-\infty}(\Gamma)$, and a collection of sinks 
  $E^{+\infty}(\Gamma)$.
 A source adjacent to the vertex $v$ is oriented towards $v$, and
 a sink adjacent to the vertex $v$ is oriented away from $v$.
 Given such an oriented graph, we define the set of all edges of 
 $\Gamma$ by
 \[
 E(\Gamma)=E^0(\Gamma)\cup E^{-\infty}(\Gamma)\cup
 E^{+\infty}(\Gamma).
 \]
 We use the notation $\underset{\longrightarrow v}{e}$ and
 $\underset{v \longrightarrow}{e}$
 if the edge $e$ is oriented toward the vertex $v$ and away from $v$,
 respectively. 
 
 A \emph{weighted} oriented graph $(\Gamma,\omega)$ is an oriented
 graph endowed with a function $\omega:E(\Gamma)\to \Z_{>0}$.
 The \emph{divergence} of a vertex $v$ of a weighted oriented graph is
 defined as
 \[
\dive(v)=\sum_{\underset{\longrightarrow v}{e}}\omega(e)-\sum_{\underset{v \longrightarrow}{e}}\omega(e).
 \]

\begin{defi}\label{df:fd}
  A \emph{floor diagram} $\D$ with Newton polygon $\Delta$ is
  a quadruple
  $\D=(\Gamma, \omega,l,r)$ such that
  \begin{enumerate}
  \item $\Gamma$ is a connected weighted acyclic oriented graph
    with $\Card(d_l\Delta)$ vertices, with
    $d_b\Delta$ sources and $d_t\Delta$ sinks;
\item all sources and sinks have weight~$1$;
\item  $l:V(\Gamma)\longrightarrow d_l\Delta$ and $r: V(\Gamma)\longrightarrow d_r\Delta$
are bijections  such that for every vertex $v\in V(\Gamma)$,
one has
$ \dive(v)=r(v)-l(v)$.
\end{enumerate}

\end{defi}

By a slight abuse of notation, we
will not distinguish in this text between
a floor diagram $\D$ and its underlying graph $\Gamma$.
The first Betti number of $\D$ is called the
\emph{genus}  of the floor diagram $\D$. The vertices
of a floor diagram are called its \emph{floors}, and its edges
 are called \emph{elevators}. 
The \emph{degree} of a floor diagram~$\D$ is defined as
$$\deg(\D)=\sum_{e\in E(\D)}(\omega(e)-1).$$

Given an integer $k\in \Z$, the quantum integer $[k](q)$ is defined by
\[
\displaystyle
    [k](q)=\frac{q^{\frac{k}{2}}-q^{-\frac{k}{2}}}{q^{\frac{1}{2}}-q^{-\frac{1}{2}}}=
    q^{\frac{k-1}{2}}  +q^{\frac{k-3}{2}}+\cdots +q^{-\frac{k-3}{2}}+q^{-\frac{k-1}{2}}.
\]
For the reader's convenience, we collect some easy or well-known properties of quantum integers
in Appendix \ref{app:quantum int}.

\begin{defi}
The refined multiplicity of a floor diagram
$\D$ is the Laurent polynomial defined by 
\[
\mu(\D)(q)=\prod_{e\in E(\D)}[\omega(e)(q)]^2.
\]
\end{defi}
Note that $\mu(\D)(q)$ is in $\Z_{>0}[q^{\pm 1}]$, 
is symmetric, and has degree $\deg(\D)$.

\begin{exa}\label{exa:ex fd}
Examples of floor diagrams together with their refined multiplicities
are depicted in Figure \ref{fig:ex FD}. Conventionally, floors and
  elevators
  are represented
by ellipses and vertical lines, respectively. Orientation on elevators
is understood from bottom to top and will not be depicted; neither
will be the weight on elevators of weight~$1$.
All floor diagrams with Newton polygon $\Delta_3$
are depicted in Figures \ref{fig:ex FD}a), b), c), and 
d). Since both functions $l$ and $d$ are trivial in this case, we do
not
specify
them on the picture. An example of floor diagram with Newton polygon
depicted in Figure 
\ref{fig:htrans}b) is depicted in  Figure \ref{fig:ex FD}e). We specify
the value of $l$ and $r$ at each floor by an integer on the left and
on the right in the corresponding ellipse, respectively.
\end{exa}
\begin{figure}[h]
\begin{center}
\begin{tabular}{ccccc}
  \includegraphics[height=3cm, angle=0]{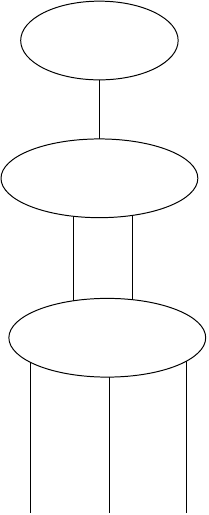}&
  \includegraphics[height=3cm, angle=0]{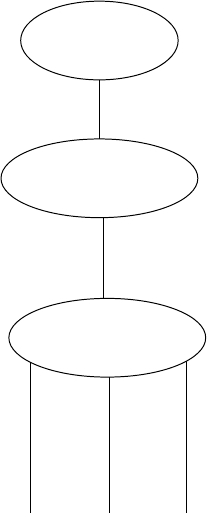}
  \put(-13,40){$2$}&
  \includegraphics[height=3cm, angle=0]{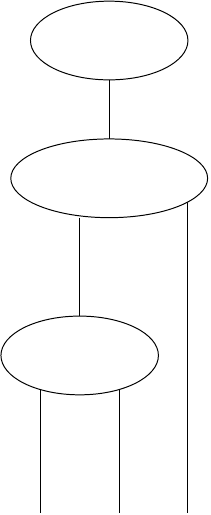}&
  \includegraphics[height=3cm, angle=0]{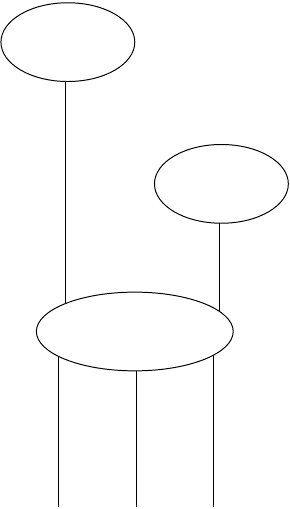}&
  \includegraphics[height=4cm, angle=0]{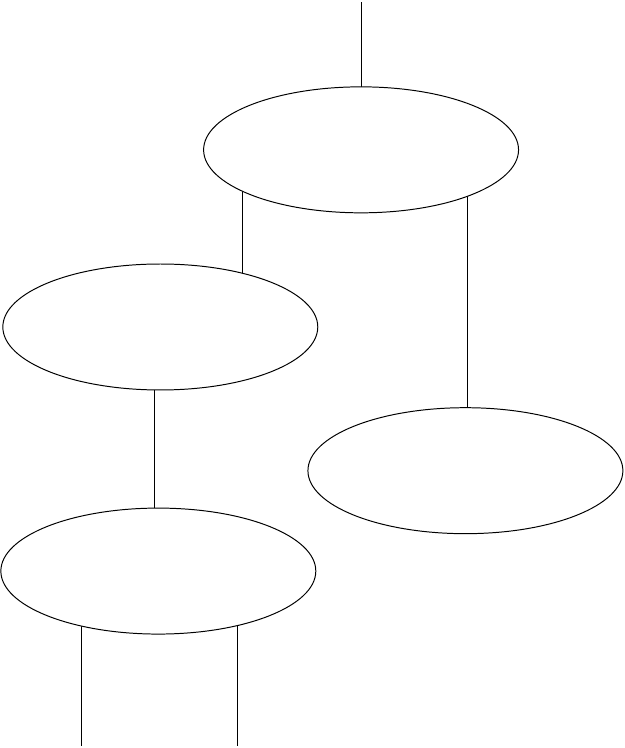}
   \put(-82,42){$3$}
  \put(-68,77){$2$}
  \put(-88,24){$1$}
  \put(-62,23){$0$}
  \put(-90,60){$-2$}
  \put(-65,60){$-1$}
  \put(-56,88){$0$}
  \put(-30,88){$2$}
  \put(-42,39){$1$}
  \put(-15,39){$0$}
 \\ \\a) $\mu=1$ &  b) $\mu=q + 2 + q^{-1}$ & c) $\mu=1$ & d) $\mu=1$ &
  e) $\mu=\ \  q^{3}\   +4q^{2}\ \ +8q\ \ +10 $
  \\&&&
  & $\quad \qquad \ +q^{-3}+4q^{-2}+8q^{-1}\qquad$
\end{tabular}
\end{center}
\caption{Example of floor diagrams with their refined multiplicities.}
\label{fig:ex FD}
\end{figure}

For a floor diagram $\D$
with Newton polygon $\Delta$ and genus $g$,
we define
\[
\eta(\D)=\eta(\Delta)+g.
\]
Note that, by a simple Euler characteristic computation, we also have
\[
\eta(\D)=\Card(V(\D))+\Card(E(\D)).
\]
The orientation of $\D$ induces a partial ordering on $\D$, that we
denote by $\preccurlyeq$. A map $m:A\subset\Z\to V(\D)\cup E(\D)$ is said to
be
\emph{increasing} if
 $i\le j$ whenever $m(i)\preccurlyeq m(j)$.
\begin{defi}
  A \emph{marking} of a floor diagram $\D$ with Newton polygon $\Delta$
  is an increasing bijection
  \[
  m\colon\{1,2,\dots,\eta(\D)\}\longrightarrow V(\D)\cup E(\D).
  \]

Two marked floor diagrams $(\D,m)$, $(\D',m')$
 with Newton polygon $\Delta$
are said to be
\emph{isomorphic} if there exists an isomorphism of weighted oriented
graphs $\varphi:\D\longrightarrow \D'$ such that $l=l'\circ\varphi$,
$r=r'\circ\varphi$, and $m'=\varphi\circ m$.
\end{defi}

The next theorem is a slight generalisation of \cite[Theorem 3.6]{Br6b}.

\begin{thm}[{\cite[Theorem 4.3]{BlGo14bis}}]\label{thm:fd}
Let $\Delta$ be an $h$-transverse polygon in $\R^2$, and $g\ge 0$ an
integer. Then one has
\[
G_\Delta(g)(q)=\sum_{(\D,m)} \mu(\D)(q),
\]
where the sum runs over all isomorphism classes of
marked floor diagrams with Newton polygon
$\Delta$ and genus $g$.
\end{thm}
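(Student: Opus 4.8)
The plan is to reduce the statement to the floor decomposition argument of \cite{Br7,Br6b}. Recall first that, by definition (following \cite{BlGo14}, itself based on \cite{Mik1}), $G_\Delta(g)(q)=\sum_C\mu_q(C)$, where $C$ ranges over the plane tropical curves of genus $g$ and Newton polygon $\Delta$ passing through a generic configuration $\mathcal P$ of $\Card(\partial\Delta\cap\Z^2)+g-1$ points, and $\mu_q(C)=\prod_v[m_v]$ is the Block--Göttsche multiplicity: the product over the vertices $v$ of $C$ of the quantum integer $[m_v]$ attached to the lattice index $m_v$ of $v$, so that $\mu_q(C)$ recovers Mikhalkin's complex multiplicity of $C$ at $q=1$. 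Since by \cite{IteMik13} this sum does not depend on $\mathcal P$, I would choose $\mathcal P$ to be \emph{stretched} in the sense of \cite{Br7,Br6b}: the heights of the points of $\mathcal P$ are very spread out compared to all other scales appearing in the problem.

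First I would establish the combinatorial correspondence. For such a $\mathcal P$, every tropical curve $C$ counted by $G_\Delta(g)$ decomposes canonically into \emph{floors} and \emph{elevators}, the elevators being the vertical edges of $C$ and the floors the closures of the connected components of $C$ minus its elevators. Because $\Delta$ is $h$-transverse, each floor is supported over a lattice-height-one horizontal slice of $\Delta$, so there are exactly $\Card(d_l\Delta)$ of them; the integer recording the slope of the left (resp.\ right) boundary edge of a floor picks out an element of $d_l\Delta$ (resp.\ $d_r\Delta$), which yields the bijections $l$ and $r$, and the balancing of $C$ along that floor becomes precisely $\dive(v)=r(v)-l(v)$. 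Contracting each floor to a point turns $C$ into a connected acyclic weighted oriented graph whose weights are the elevator weights, whose $d_b\Delta$ sources (resp.\ $d_t\Delta$ sinks) are the unbounded downward (resp.\ upward) elevators, and whose first Betti number equals $g$; listing the floors and elevators of $C$ in the order in which they are met by the points of $\mathcal P$ produces an increasing marking $m$. I would then check that $C\mapsto(\D,m)$ is a bijection from the set of tropical curves through $\mathcal P$ onto the set of isomorphism classes of marked floor diagrams with Newton polygon $\Delta$ and genus $g$, with inverse realizing each abstract floor, together with its incident elevators, as the unique tropical piece in the corresponding slice passing through the relevant subset of $\mathcal P$.

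Next I would match the multiplicities, i.e.\ verify $\mu_q(C)=\mu(\D)$ under this bijection. Each floor is a rigid elementary piece: away from the points where elevators attach, all its vertices have lattice index $1$ and contribute $1$ to $\mu_q(C)$, whereas the vertex at which an elevator of weight $\omega$ is attached to a floor has lattice index $\omega$ and contributes the quantum integer $[\omega]$. Since every elevator has exactly two endpoints lying on floors, while the weight-$1$ sources and sinks contribute $1$, this gives $\mu_q(C)=\prod_{e\in E(\D)}[\omega(e)]^2=\mu(\D)$, and summing over $C$ proves the theorem. This recovers \cite[Theorem 3.6]{Br6b} when $\Delta$ is one of the polygons treated there; the extra input needed for an arbitrary $h$-transverse polygon is the bookkeeping of the slopes $\tfrac1k$ of the left and right boundary edges, which is the content of \cite[Theorem 4.3]{BlGo14bis}.

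I expect the main obstacle to be the floor decomposition bijection itself: one must show that stretching $\mathcal P$ genuinely forces $C$ into the shape ``floors joined by vertical elevators'', rule out any further degeneration, and verify that the data $(\omega,l,r)$ and the divergence conditions read off from $C$ match Definition \ref{df:fd} exactly — the subtle part being the handling of boundary edges of slope $\tfrac1k$ for varying $k$, which is precisely what makes the general $h$-transverse case more involved than $\CP^2$. Once the decomposition is in place, the multiplicity factorization is routine.
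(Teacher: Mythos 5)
The paper does not actually prove Theorem~\ref{thm:fd}: it states it as a citation to \cite[Theorem 4.3]{BlGo14bis} (described as a slight generalisation of \cite[Theorem 3.6]{Br6b}) and uses it as a black box, so there is no internal proof to compare against. Your sketch correctly reconstructs the floor-decomposition argument of those references: stretch $\mathcal P$ vertically, deduce that every tropical curve through $\mathcal P$ breaks into floors and elevators, contract the floors to obtain a weighted acyclic oriented graph, read $l$ and $r$ from the slopes of the two unbounded ends of each floor (this is the genuinely $h$-transverse bookkeeping), record the marking from the order in which $\mathcal P$ meets floors and elevators, and factor the Block--Göttsche vertex multiplicities. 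Your multiplicity computation is right; in fact the caveat ``away from the points where elevators attach'' is vacuous here, because a floor lives over a lattice-height-one slice, every triangle in the dual subdivision of such a slice has a horizontal edge, and hence every trivalent vertex of a floor is the attachment point of some elevator whose weight $\omega$ equals the normalised area of that triangle and contributes $[\omega]$; squaring over the two floor endpoints of each bounded elevator, and using that sources and sinks have weight $1$, gives $\prod_{e}[\omega(e)]^2=\mu(\D)$. So the proposal is sound and matches the cited proof; just be aware that you are supplying an argument the paper chose to reference rather than include, and that the real technical work is the bijection between stretched-configuration tropical curves and marked floor diagrams, which you have identified but left as an acknowledged obstacle.
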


\begin{exa}\label{exa:fd cubic}
  Using Figures \ref{fig:ex FD}a), b), c), and d)
 one obtains
  \[
  G_{\Delta_3}(1)(q)=1 \qquad\mbox{and}\qquad
  G_{\Delta_3}(0)(q)=q+10+q^{-1}.
  \]
\end{exa}

\begin{exa}\label{exa:fd quartic}
Combining Theorem \ref{thm:fd} with Figures \ref{degree 4 g=3,2},
\ref{degree 4 g=1}, and \ref{degree 4 g=0}, where all floor
diagrams with Newton polygon $\Delta_4$ are depicted, one obtains:
\[
G_{\Delta_4}(3)=1,  \qquad
G_{\Delta_4}(2) = 3q^{-1} + 21 + 3q, \qquad
G_{\Delta_4}(1) = 3q^{-2} + 33q^{-1} +153 + 33q + 3q^2,
\]
\[
G_{\Delta_4}(0) = q^{-3} +13q^{-2} + 94q^{-1} + 404 + 94q + 13q^2 + q^3.
\]
\begin{figure}[h]
\centering
\begin{tabular}{cccccc}
\includegraphics[height=3.5cm, angle=0]{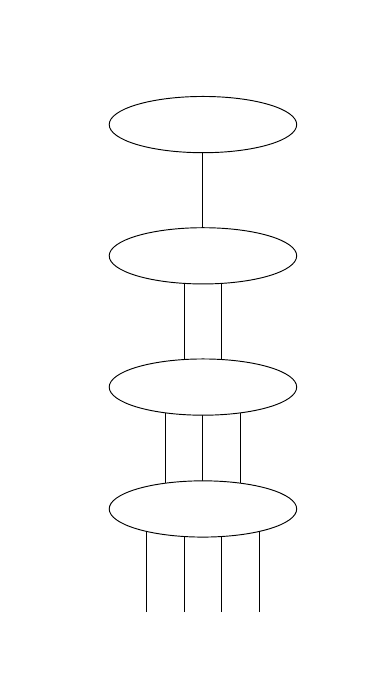}&
\includegraphics[height=3.5cm, angle=0]{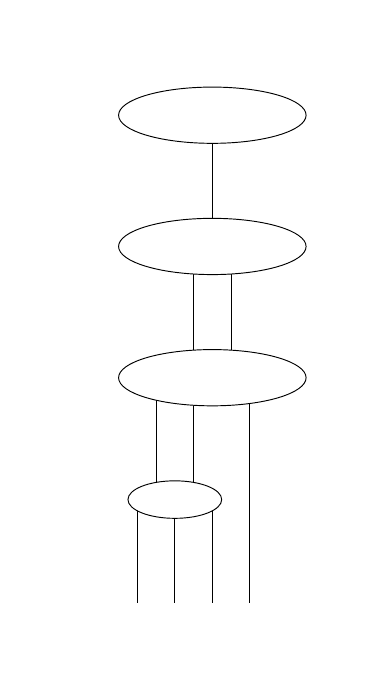}&
\includegraphics[height=3.5cm, angle=0]{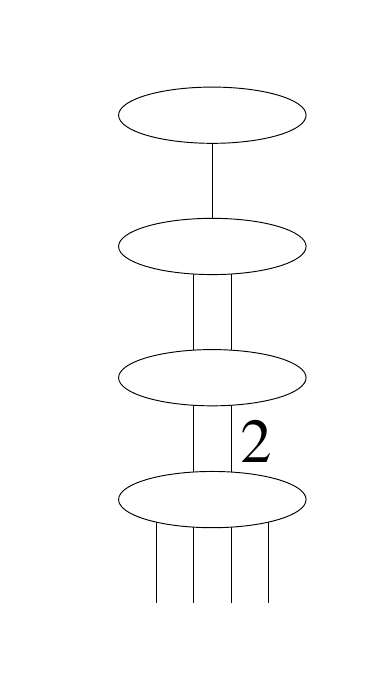}&
\includegraphics[height=3.5cm, angle=0]{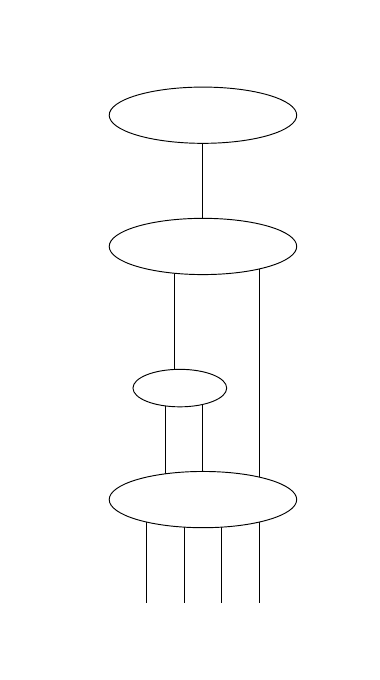}&
\includegraphics[height=3.5cm, angle=0]{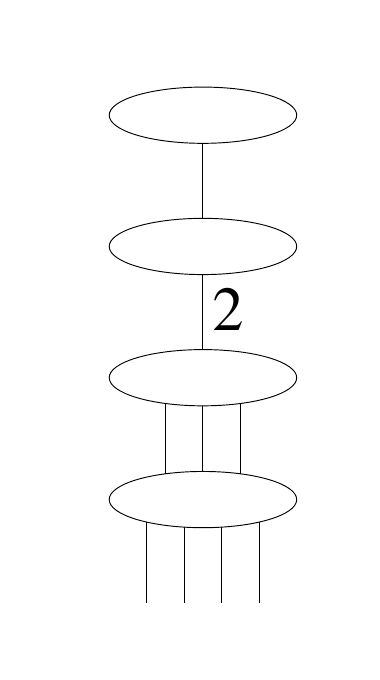}&
\includegraphics[height=3.5cm, angle=0]{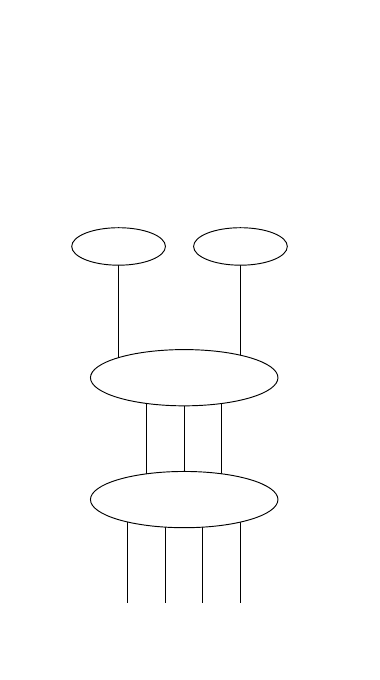}
\end{tabular}
\caption{Floor diagrams of genus 3 and 2 with Newton polygon
  $\Delta_4$}
\label{degree 4 g=3,2}
\end{figure}

\begin{figure}[h]
\centering
\begin{tabular}{cccccc}
\includegraphics[height=3.5cm, angle=0]{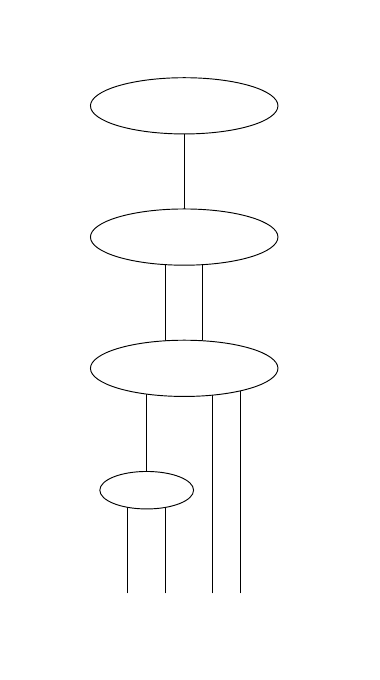}&
\includegraphics[height=3.5cm, angle=0]{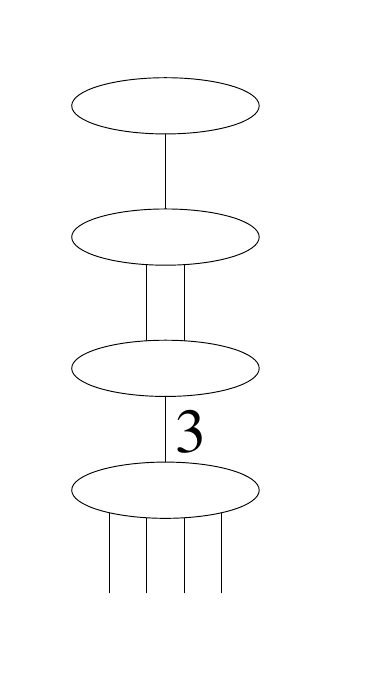}&
\includegraphics[height=3.5cm, angle=0]{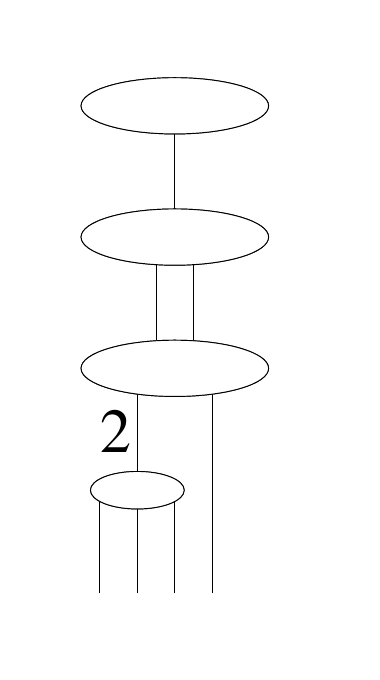}&
\includegraphics[height=3.5cm, angle=0]{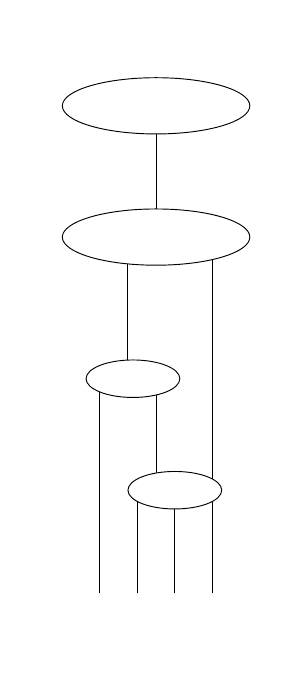}&
\includegraphics[height=3.5cm, angle=0]{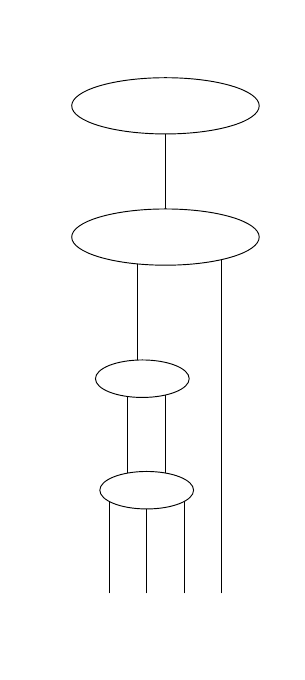}&
\includegraphics[height=3.5cm, angle=0]{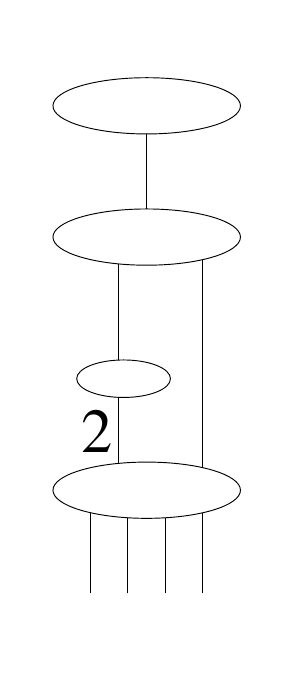}
\\
\includegraphics[height=3.5cm, angle=0]{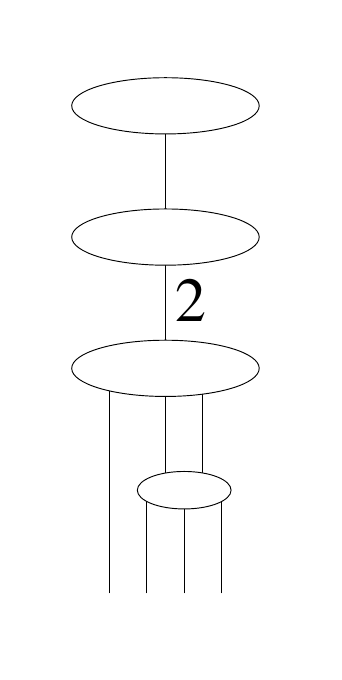}&
\includegraphics[height=3.5cm, angle=0]{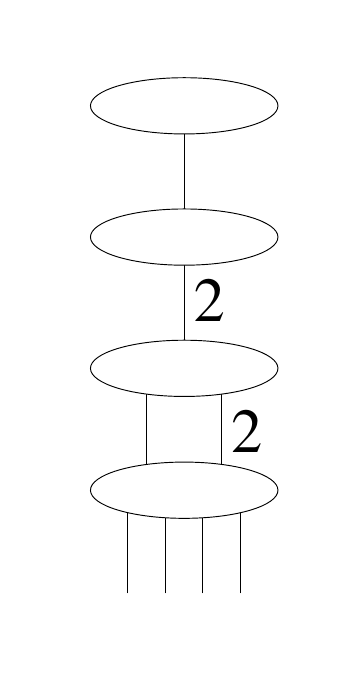}&
\includegraphics[height=3.5cm, angle=0]{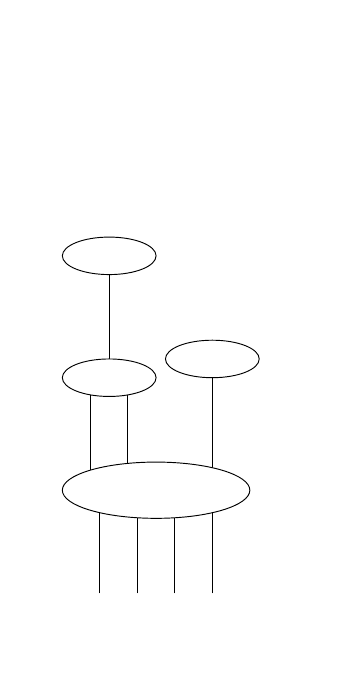}&
\includegraphics[height=3.5cm, angle=0]{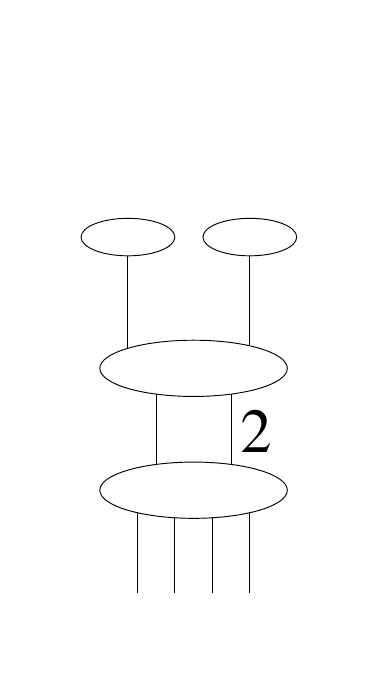}&
\includegraphics[height=3.5cm, angle=0]{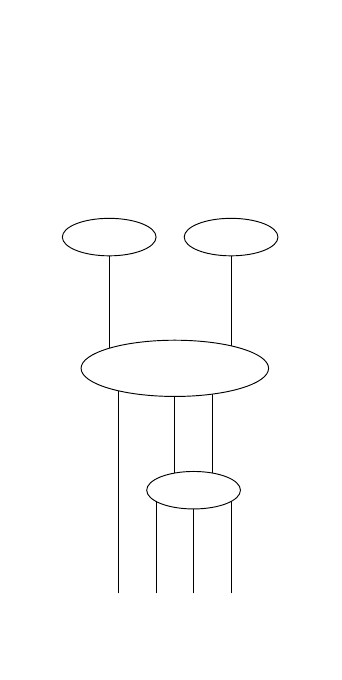}&

\end{tabular}
\caption{Floor diagrams of genus  1 with Newton polygon $\Delta_4$}
\label{degree 4 g=1}
\end{figure}

\begin{figure}[h]
\centering
\begin{tabular}{cccccc}
\includegraphics[height=3.5cm, angle=0]{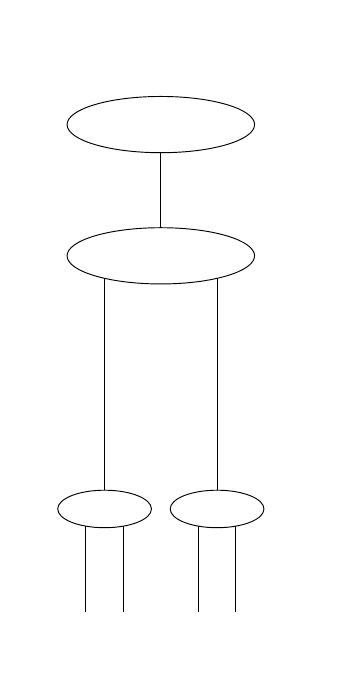}&
\includegraphics[height=3.5cm, angle=0]{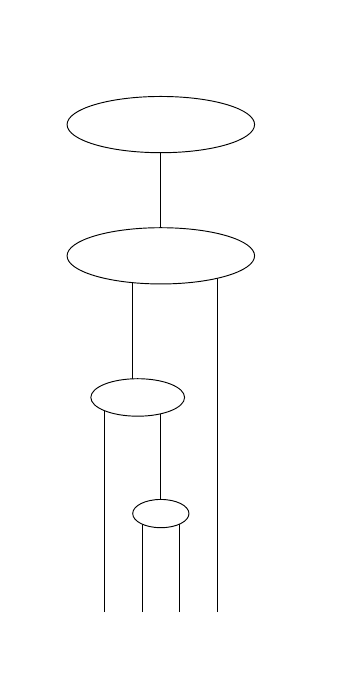}&
\includegraphics[height=3.5cm, angle=0]{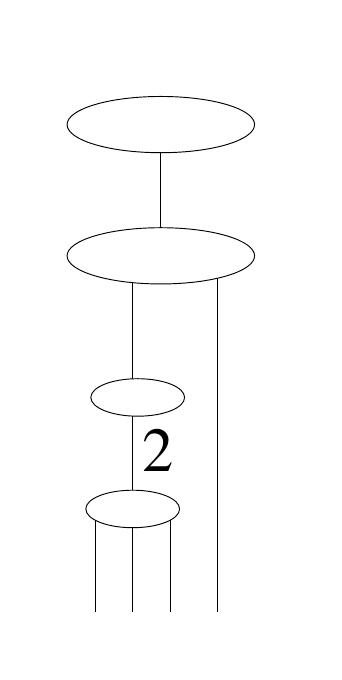}&
\includegraphics[height=3.5cm, angle=0]{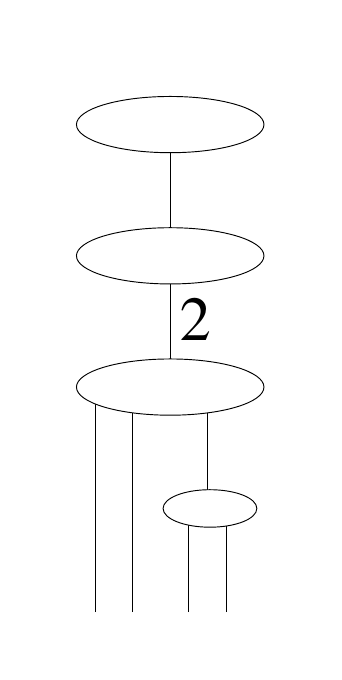}&
\includegraphics[height=3.5cm, angle=0]{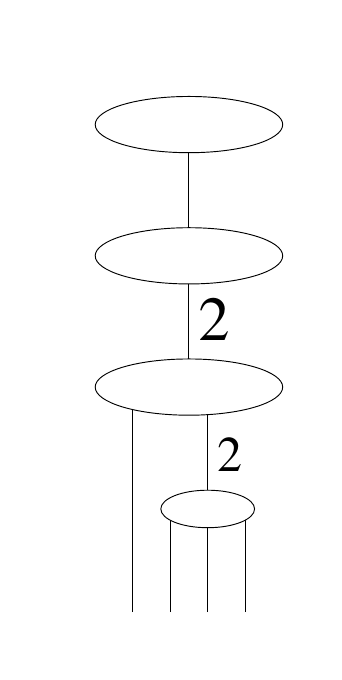}&
\includegraphics[height=3.5cm, angle=0]{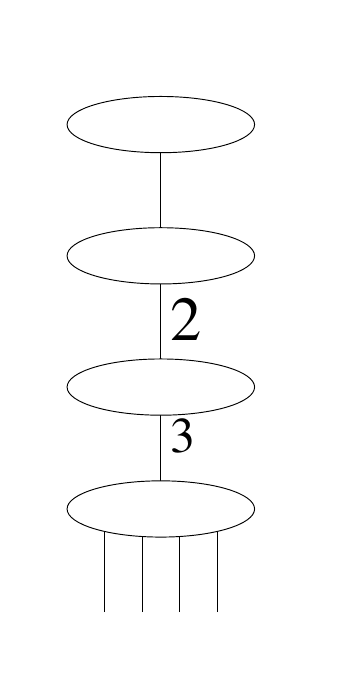}
\\
\includegraphics[height=3.5cm, angle=0]{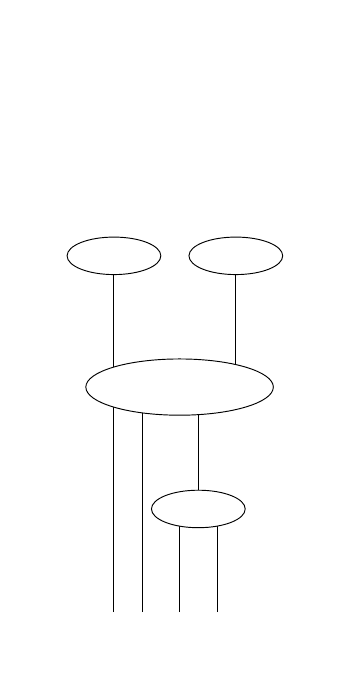}&
\includegraphics[height=3.5cm, angle=0]{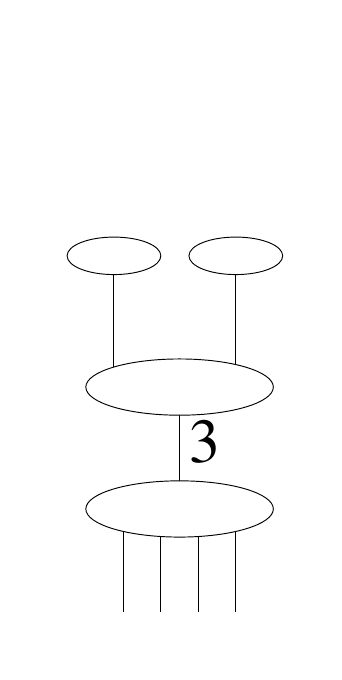}&
\includegraphics[height=3.5cm, angle=0]{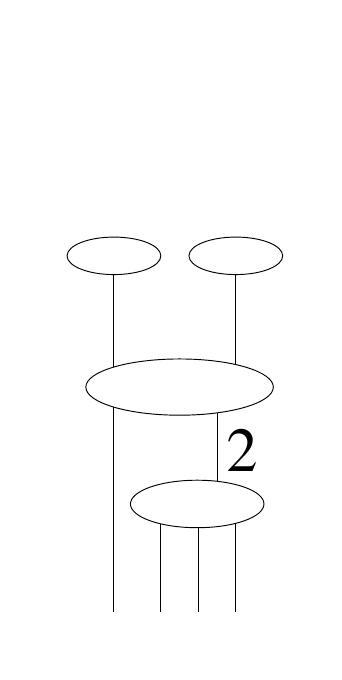}&
\includegraphics[height=3.5cm, angle=0]{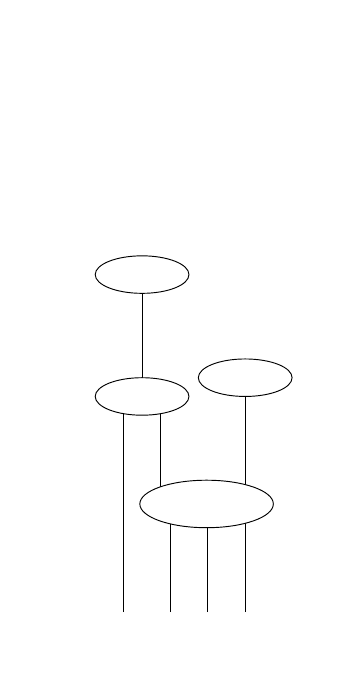}&
\includegraphics[height=3.5cm, angle=0]{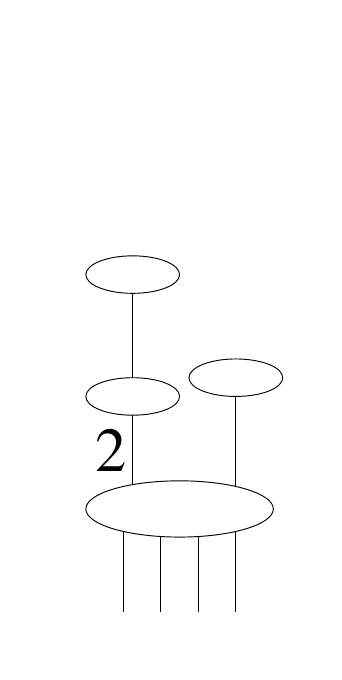}&
\includegraphics[height=3.5cm, angle=0]{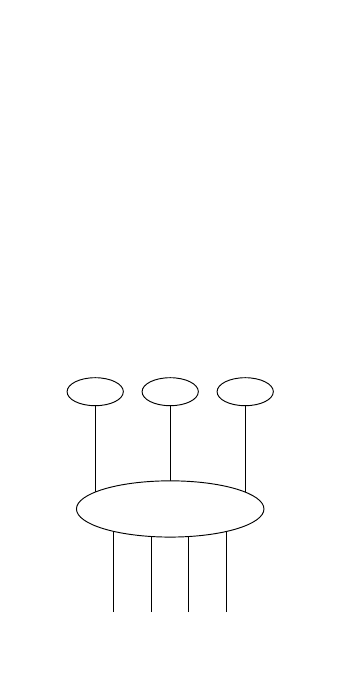}

\end{tabular}
\caption{Floor diagrams of genus  0 with Newton polygon $\Delta_4$}
\label{degree 4 g=0}
\end{figure}
\end{exa}

\subsection{Göttsche-Schroeter refined invariants via floor diagrams}

In the case when $g=0$, one can tweak the notion of marking of a
floor diagram in order to compute Göttsche-Schroeter refined
invariants.

\begin{defi}
A \emph{pairing of order $s$} of the set $\mathcal P=\{1,\cdots, n\}$ is a set
$S$ of $s$ disjoint pairs $\{i,i+1\}\subset \mathcal P$.

Given a floor diagram $\D$ and
a pairing $S$ of the set $\{1,\cdots,  \eta(\D)\}$, a marking
$m$ of $\D$ is said to be 
\emph{compatible with $S$} if for any $\{i,i+1\}\in S$, the set
$\{m(i),m(i+1)\}$ consists of one of the following sets
(see Figure \ref{fig:S-marking}): 
\begin{itemize}
\item an elevator and an adjacent
  floor;
  \item two elevators that have a common adjacent floor, from which  both
  are  emanating or both are ending. 
\end{itemize}
\end{defi}
\begin{figure}[h]
\begin{center}
\begin{tabular}{ccccc}
  \includegraphics[height=2.5cm, angle=0]{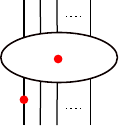}&
   \includegraphics[height=2.5cm, angle=0]{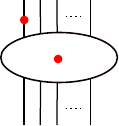}&
  \includegraphics[height=2.5cm, angle=0]{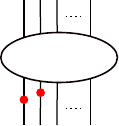}&
   \includegraphics[height=2.5cm, angle=0]{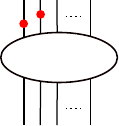}&
 \includegraphics[height=3.3cm, angle=0]{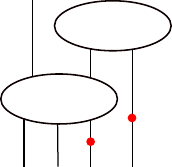}
 \\
 \\a) Compatible &b) Compatible &c) Compatible  &d) Compatible
 & e) Not compatible
\end{tabular}
\end{center}
\caption{Marking and pairing; the red dots corresponds to the image of
  $i$ and $i+1$.}
\label{fig:S-marking}
\end{figure}

We generalize the refined multiplicity of a marked floor
diagram in the presence of a pairing.
Given $(\D,m)$  a marked floor diagram compatible with a pairing $S$, 
we define the following sets of elevators of $\D$:
\[
\begin{array}{l}
  E_0=\{e\in E(\D)\ | \ e\notin m(S)\};
  \\ E_1=\{e\in E(\D)\ |\ \{e,v\}=m(\{i,i+1\})\mbox{ with }v\in V(\D)
  \mbox{ and } \{i,i+1\}\in S\};
  \\E_2=\left\{\{e,e'\}\subset E(\D)\ |\ \{e,e'\}=m(\{i,i+1\})\mbox{ with }
   \{i,i+1\}\in S\right\}.
\end{array}
\]

\begin{defi}\label{def:refined mult s}
  The refined $S$-multiplicity of a marked floor diagram $(\D,m)$  is defined by
  \[
  \mu_S(\D,m)(q)=\prod_{e\in E_0} [\omega(e)]^2(q)
    \prod_{e\in E_1}[\omega(e)](q^2)
\prod_{\{e,e'\}\in E_2} \frac{[\omega(e)]\times[\omega(e')]\times[\omega(e)+\omega(e')]}{[2]}(q)
\]
if  $(\D,m)$  is compatible with  $S$, and by
  \[
  \mu_S(\D,m)(q)=0
  \]
  otherwise.
\end{defi}

Clearly  $\mu_S(\D,m)(q)$ is 
 symmetric  in $q^{\frac{1}2}$, but more can be said. 
\begin{lemma}
For any marked floor diagram $(\D,m)$ compatible with a
pairing $S$, one has
\[
\mu_S(\D,m)(q)\in \Z_{\ge 0}[q^{\pm 1}].
\]
Furthermore $\mu_S(\D,m)(q)$ has degree $\deg(\D)$.
\end{lemma}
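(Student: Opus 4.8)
The statement has two parts: integrality with nonnegative coefficients, and the degree computation. I would handle the degree first, since it is essentially immediate: each factor $[\omega(e)]^2(q)$ has degree $\omega(e)-1$, each factor $[\omega(e)](q^2)$ has degree $\omega(e)-1$, and each factor $\frac{[\omega(e)][\omega(e')][\omega(e)+\omega(e')]}{[2]}(q)$ has degree $(\omega(e)-1)+(\omega(e')-1)+(\omega(e)+\omega(e')-1)-1 = 2\omega(e)+2\omega(e')-4$. Summing over all elevators and comparing with $\deg(\D)=\sum_{e\in E(\D)}(\omega(e)-1)$, one checks that the total degree equals $\deg(\D)$ in all cases: an unpaired elevator contributes $\omega(e)-1$ just as in $\mu(\D)$; a type-$E_1$ elevator still contributes $\omega(e)-1$; and a type-$E_2$ pair $\{e,e'\}$ contributes $2\omega(e)+2\omega(e')-4$, whereas in $\mu(\D)$ it would contribute $2(\omega(e)-1)+2(\omega(e')-1)=2\omega(e)+2\omega(e')-4$ — the same. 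So the degree is unchanged by the presence of $S$, hence equal to $\deg(\D)$.

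For integrality and nonnegativity, it suffices to treat each factor separately, since $\Z_{\ge 0}[q^{\pm 1}]$ is closed under products. The factors in $E_0$ are squares of quantum integers, hence manifestly in $\Z_{\ge 0}[q^{\pm 1}]$. For a factor in $E_1$, write $[\omega(e)](q^2) = q^{\omega(e)-1} + q^{\omega(e)-3} + \cdots + q^{-(\omega(e)-1)}$, which is again in $\Z_{\ge 0}[q^{\pm 1}]$ (and symmetric). The only nontrivial factor is the $E_2$ one: I must show that $\frac{[k][m][k+m]}{[2]}(q)\in\Z_{\ge 0}[q^{\pm 1}]$ for all $k,m\in\Z_{>0}$. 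This is where I expect the real work to be, and I would isolate it as a quantum-number identity, presumably one of those collected in Appendix \ref{app:quantum int}.

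The cleanest route to the $E_2$ claim is to exhibit an explicit nonnegative formula. One has the identity $[k][m] = \sum_{j} [k+m-1-2j]$ where $j$ runs over $0,\dots,\min(k,m)-1$ (the quantum analogue of the Clebsch--Gordan / Pieri rule), so that $\frac{[k][m][k+m]}{[2]} = \frac{[k+m]}{[2]}\sum_{j=0}^{\min(k,m)-1}[k+m-1-2j]$. Since $[a][b]/[2]$ need not be a polynomial for both parities, a better regrouping is to use $\frac{[a][k+m]}{[2]}$: note $[2](q)=q^{1/2}+q^{-1/2}$ and $[a][k+m] = [2]\cdot([a+k+m-2]+[a+k+m-6]+\cdots)$ whenever $a$ and $k+m$ have opposite parity — and indeed $k+m-1-2j$ and $k+m$ always have opposite parity, so each summand pairs correctly with $[k+m]$ and the division by $[2]$ is exact with nonnegative quotient. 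Thus $\frac{[k][m][k+m]}{[2]}(q)$ is a sum of terms of the form $[N](q)$ with $N>0$, hence lies in $\Z_{\ge 0}[q^{\pm 1}]$, and is symmetric. Putting the three factor types together gives $\mu_S(\D,m)(q)\in\Z_{\ge 0}[q^{\pm 1}]$, completing the proof; the case $\mu_S(\D,m)=0$ is trivial. The main obstacle is purely the bookkeeping in the quantum-integer identity for the $E_2$ factor — checking the parity conditions so that every division by $[2]$ is exact — which I would dispatch by citing the relevant lemma in Appendix \ref{app:quantum int} rather than expanding it inline.
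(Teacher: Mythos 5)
Your route is essentially the paper's: both reduce the nontrivial $E_2$ factor to a parity observation combined with the two quantum-integer identities in Appendix \ref{app:quantum int} (the Clebsch--Gordan expansion of $[k][l]$, Lemma \ref{lem:quatum product}, and the divisibility $[2k]/[2]=[k](q^2)$, Lemma \ref{lem:quantum div}). The paper's own argument is slightly more direct, though: it observes that among $\omega(e)$, $\omega(e')$, $\omega(e)+\omega(e')$ exactly one is even, pairs that factor with $[2]$ via Lemma \ref{lem:quantum div}, and is done, with no need to first expand $[\omega(e)][\omega(e')]$ as a sum and then regroup with $[\omega(e)+\omega(e')]$.

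Two slips in the details deserve a flag. First, your degree bookkeeping implicitly uses $\deg[k]=k-1$, whereas $\deg[k]=\tfrac{k-1}{2}$: the degree of the $E_2$ factor is $\omega(e)+\omega(e')-2$, not $2\omega(e)+2\omega(e')-4$, and the degree of $[\omega(e)]^2[\omega(e')]^2$ in $\mu(\D)$ is likewise $\omega(e)+\omega(e')-2$, not $2(\omega(e)-1)+2(\omega(e')-1)$. The two errors cancel, so the comparison and hence the stated degree remain correct, but the intermediate numbers are off by a factor of $2$. Second, the claimed regrouping identity
\[
[a][k+m]=[2]\cdot\bigl([a+k+m-2]+[a+k+m-6]+\cdots\bigr)
\]
fails when $a$ is odd: for instance $[3][6]=[8]+[6]+[4]$, whereas $[2]\bigl([7]+[3]\bigr)=[8]+[6]+[4]+[2]$. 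The correct mechanism when $a$ and $k+m$ have opposite parity is simply that $[a][k+m]$ expands, via Lemma \ref{lem:quatum product}, as a sum of $[c_\ell]$ with all $c_\ell$ even, and each $[c_\ell]/[2]=[c_\ell/2](q^2)$ lies in $\Z_{\ge 0}[q^{\pm 1}]$ by Lemma \ref{lem:quantum div}; the quotient is a sum of terms $[N](q^2)$, not $[N](q)$. Since you explicitly defer the precise identity to the appendix, neither slip breaks the proof, but as written they would not survive a careful check.
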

\begin{proof}
  The degree of $\mu_S(\D,m)(q)$ is clear.
  Next, the factors of $\mu_S(\D,m)(q)$
 coming  from 
  elevators in $E_0$ and $E_1$ are clearly in $\Z_{\ge 0}[q^{\pm 1}]$.
  Given a pair $\{e,e'\}$ in $E_3$, 
  one of the integers $\omega(e), \omega(e')$ or
 $\omega(e)+\omega(e')$ is even, and 
 the remaining two terms have the same parity. Hence it follows
 from Lemmas \ref{lem:quatum product} and \ref{lem:quantum div} that
 \[
 \frac{[\omega(e)]\times[\omega(e')]\times[\omega(e)+\omega(e')]}{[2]}(q)\in
 \Z_{\ge 0}[q^{\pm 1}],
 \]
 and the lemma is proved.
\end{proof}

Recall that we use the notation $\eta(\Delta)=\Card(\partial\Delta\cap \Z^2)-1$.
\begin{thm}\label{thm:psi fd}
Let $\Delta$ be an $h$-transverse polygon in $\R^2$, and let $s$ be a non-negative 
integer. Then for any pairing $S$ of order $s$ of
$\{1,\cdots , \eta(\Delta)\}$, one has
\[
G_\Delta(0;s)(q)=\sum_{(\D,m)}\mu_S(\D,m)(q),
\]
where the sum runs over all isomorphism classes of
marked floor diagrams with Newton polygon
$\Delta$ and  genus $0$.
\end{thm}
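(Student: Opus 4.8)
The plan is to reduce Theorem \ref{thm:psi fd} to Theorem \ref{thm:fd} by a combinatorial interpolation argument: show that the value at $q=1$ of $\sum_{(\D,m)}\mu_S(\D,m)(q)$ recovers the appropriate complex Gromov--Witten/descendant count, and that the right-hand side is independent of the chosen pairing $S$, so that it can be identified with the genuine invariant $G_\Delta(0;s)$. A cleaner route, which I would favour, is to prove the statement by induction on $s$, using the fact that the theorem is known for $s=0$ (this is exactly Theorem \ref{thm:fd} in genus $0$, since a pairing of order $0$ forces $E_1=E_2=\emptyset$ and $\mu_S(\D,m)=\mu(\D)$). So the core of the proof is an \emph{exchange relation}: fixing a pairing $S$ of order $s$ and an additional adjacent pair $\{j,j+1\}$ disjoint from $\bigcup S$, I want to compare the floor-diagram sum for the pairing $S$ with that for the pairing $S'=S\cup\{\{j,j+1\}\}$, and show the difference matches the recursive definition of Göttsche--Schroeter invariants (the tropical analogue of formula (\ref{eq:w rel})).

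First I would set up the bookkeeping: for a fixed floor diagram $\D$ and a fixed pairing $S$, partition the set of $S$-compatible markings according to the ``local picture'' near each constrained pair. For a pair $\{i,i+1\}\in S$ whose images form $\{e,v\}$ (elevator and adjacent floor) or $\{e,e'\}$ (two co-emanating elevators at a floor), the contribution to $\mu_S$ is the modified local factor $[\omega(e)](q^2)$ or $\frac{[\omega(e)][\omega(e')][\omega(e)+\omega(e')]}{[2]}(q)$ in place of the ``free'' factors $[\omega(e)]^2$ or $[\omega(e)]^2[\omega(e')]^2$ that would appear without the constraint. The key algebraic identities to isolate are the quantum-number relations from Appendix \ref{app:quantum int} expressing $[\omega]^2(q)-[\omega](q^2)$ and $[\omega]^2[\omega']^2(q)-\frac{[\omega][\omega'][\omega+\omega']}{[2]}(q)$ as sums of products of quantum numbers of strictly smaller total weight. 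These identities are what will let me re-express the ``defect'' $\mu_{S\setminus\{j,j+1\}} - \mu_S$ summed over markings as a floor-diagram sum over a modified Newton polygon — morally the polygon of $X_\Delta$ blown up at a point, with the elevator of weight $\omega$ replaced by configurations accounting for the class $-2[E]$.

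Then I would run the induction: assuming $G_\Delta(0;s)(q)=\sum_{(\D,m)}\mu_S(\D,m)$ for pairings of order $s$ and for all $h$-transverse $\Delta$, I expand $\sum_{(\D,m)}\mu_{S}(\D,m) - \sum_{(\D,m)}\mu_{S'}(\D,m)$ using the local exchange identity applied at the pair $\{j,j+1\}$, identify the telescoping remainder with $2\sum\mu_{S}(\D',m')$ over floor diagrams for the blown-up surface, and invoke the inductive hypothesis on both sides to match the tropical refinement of (\ref{eq:w rel}); finally, since $G_\Delta(0;s')$ is already known to satisfy this same recursion in $s'$, the two agree. A point needing care is the range of the pairing: the theorem asserts the sum is the \emph{same} for every pairing $S$ of order $s$, so before the induction closes I must check the $S$-independence — this follows because the recursion determines $\sum_{(\D,m)}\mu_S$ from data independent of $S$, together with the base case. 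The main obstacle, I expect, is precisely the combinatorial identification of the ``defect sum'' with a floor-diagram count on the blown-up polygon: tracking how an elevator of weight $\omega$ carrying a constrained pair breaks up, matching the quantum-number expansions term by term with genuine marked floor diagrams (including the correct multiplicities and the factor of $2$), and handling boundary cases where the relevant floor is a source or sink or where the blow-up makes the polygon non-$h$-transverse. This is the step where the hypothesis that $\Delta$ be $h$-transverse, and the precise combinatorics of Definition \ref{def:refined mult s}, are genuinely used.
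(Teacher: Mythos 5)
Your proposal takes a genuinely different route, but it has a gap that I do not see how to close without essentially proving the theorem another way first. The induction on $s$ hinges on two things you treat as available: (i) that $G_\Delta(0;s)$ satisfies a blow-up recursion in $s$ analogous to $(\ref{eq:w rel})$, and (ii) that the floor-diagram sum $\sum \mu_S$ satisfies the same recursion, so the two can be identified. But the refined blow-up relation for tropical invariants is exactly Proposition~\ref{prop:wel} in this paper, it is established only for the restricted family of polygons in Figure~\ref{fig:np wel}, and its proof \emph{uses} Theorem~\ref{thm:psi fd}. So invoking ``$G_\Delta(0;s)$ is already known to satisfy this recursion'' is circular, and for a general $h$-transverse $\Delta$ the blown-up surface need not even correspond to an $h$-transverse polygon, so the inductive family is not closed under the operation you need.

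There is a second, related gap: the $S$-independence of $\sum_{(\D,m)}\mu_S(\D,m)$. Your plan needs this to glue the induction together, and you propose to deduce it from the recursion. The paper explicitly flags this independence as non-obvious (see the remark after Theorem~\ref{thm:psi fd}: the authors know of no proof that does not pass through tropical geometry as in \cite{GotSch16}), so it cannot be taken as an input. Finally, you yourself identify the ``main obstacle'' — matching the quantum-number defect $[\omega]^2(q)-[\omega](q^2)$, etc., term by term with a floor-diagram count on a blown-up polygon — and this is precisely the hard combinatorial content that the sketch leaves open.

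The paper's actual proof avoids all of this. It is not an induction on $s$: it constructs a concrete contraction $\Psi$ from marked floor diagrams compatible with $S$ to marked Psi-floor diagrams of type $(n(\D)-2s,s)$ in the sense of Block–Gathmann–Markwig \cite{BGM} (Figure~\ref{fig:contraction}), checks that $\Psi$ is surjective with explicitly described fibers, and then invokes the already-established floor-diagram formula for $G_\Delta(0;s)$ via Psi-floor diagrams with the refined multiplicity of \cite[Definition 3.1]{GotSch16} (see also \cite{BleShu17}). In other words, the theorem is reduced to a known computation of Göttsche–Schroeter invariants rather than re-derived from a recursion. If you want to salvage your approach, you would first need to prove, independently and for arbitrary $h$-transverse polygons, both the refined blow-up formula and the $S$-independence of the floor-diagram sum — which, as the authors note, would itself be of independent interest.
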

 \begin{proof}
Given a marked floor diagram $(\D,m)$ with Newton polygon
$\Delta$, of genus $0$, and compatible with $S$, we construct
a marked Psi-floor diagram of type $(\eta(\D)-2s,s)$ with
a fixed order induced by $S$ on the Psi-powers of the vertices
(in the terminology of \cite[Definition 4.1 and Remark 4.6]{BGM}), as depicted in
Figure \ref{fig:contraction} and its symmetry with respect to the
horizontal axis. This construction clearly establishes a
surjection $\Psi$ from the first set of floor diagrams to the second
one. Furthermore, given a marked Psi-floor diagram $(\D,m')$, all
marked
floor diagrams such that  $\Psi(\D,m)=\Psi(\D,m')$ are described by
the two conditions:
\begin{enumerate}
\item $m(\{i,i+1 \})=m'(\{i,i+1 \})$ if $\{i,i+1\}\in S$;
  \item $m(i)=m'(i)$ if $i$ does not belong to any pair in $S$.
\end{enumerate}
 \begin{figure}[h]
\begin{center}
\begin{tabular}{ccc}
  \includegraphics[height=2.5cm, angle=0]{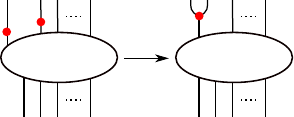}
   \put(-195,65){\tiny{$\omega(e_1)$}}
    \put(-165,75){\tiny{$\omega(e_2)$}}
 \put(-185,50){\tiny{$i$}}
   \put(-172,60){\tiny{$i+1$}}
   \put(-85,65){\tiny{$\omega(e_1)$}}
    \put(-55,75){\tiny{$\omega(e_2)$}}
    \put(-108,51){\tiny{$\omega(e_1)+\omega(e_2)$}}
& \hspace{1cm} &
  \includegraphics[height=2.5cm, angle=0]{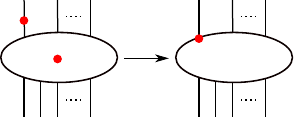}
 \put(-155,35){\tiny{$i$}}
   \put(-185,55){\tiny{$i+1$}}

\end{tabular}
\end{center}
\caption{From marked floor diagrams to Psi-floor diagrams
  ($\{i,i+1\}\in S$).}
\label{fig:contraction}
\end{figure}

 Substituting  the integer
 multiplicity of a Psi-floor diagram from \cite{BGM} of type
 $(k_0,k_1)$ by the refined
 multiplicity from \cite[Definition 3.1]{GotSch16}, see also
 \cite[Section 2.1]{BleShu17}, one 
 obtains the result.
 \end{proof}
 
 \begin{rem}
Theorem \ref{thm:psi fd} implies that the
right-hand side term only depends on $s$, and not on a particular
choice of $S$. This  does not look immediate to us. It may be
interesting to have a proof of this independence with respect to $S$
which does not go through tropical geometry as in \cite{GotSch16}.

Another type of pairing and multiplicities
has been proposed in \cite{Br6b} to compute
Welschinger invariants $W_{X_\Delta}(\mathcal L_\Delta;s)$, when $X_\Delta$
is a del Pezzo surface. Note that the multiplicities from \cite{Br6b}
do not coincide with the refined $S$-multiplicities defined in
Definition \ref{def:refined mult s} evaluated at $q=-1$.  
 \end{rem}

\begin{exa}\label{exa:fd cubic s}
  We continue Examples \ref{exa:ex fd} and \ref{exa:fd cubic}.
  All marked floor diagrams of  genus 0 and  Newton polygon $\Delta_3$
  are depicted in
  Table \ref{fig:cubic}. Below each of them, we write the multiplicity
  $\mu$ and the multiplicities $\mu_{S_i}$ for
  $S_i=\{(9-2i,10-2i),\cdots,(7,8)\}$.
 The first
floor diagram
has an elevator of weight 2, but we do not mention it in the picture to
avoid confusion.
According to Theorem \ref{thm:psi fd} we find
$G_{\Delta_3}(0;s)=q+10-2s+q^{-1}$.
It is interesting to compare this computation with \cite[Example 3.10]{Br6b}.
\end{exa}

\begin{table}[h]
\centering
\begin{tabular}{c|c|c|c|c|c|c|c|c|c}
&
\includegraphics[height=2.5cm, angle=0]{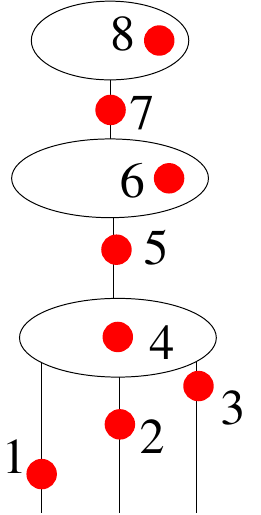}&
\includegraphics[height=2.5cm, angle=0]{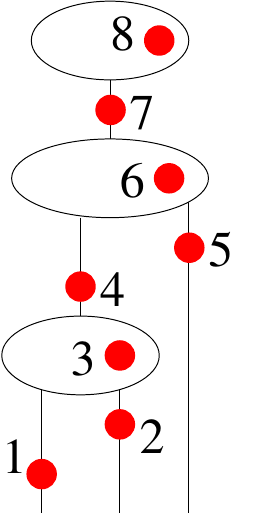}&
\includegraphics[height=2.5cm, angle=0]{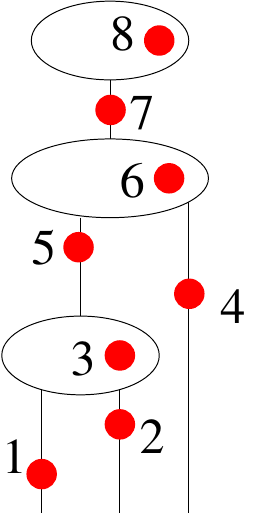}&
\includegraphics[height=2.5cm, angle=0]{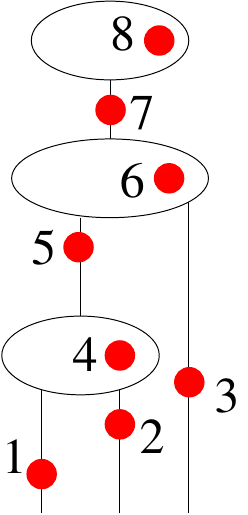}&
\includegraphics[height=2.5cm, angle=0]{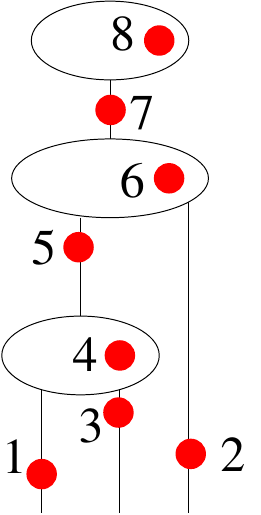}&
\includegraphics[height=2.5cm, angle=0]{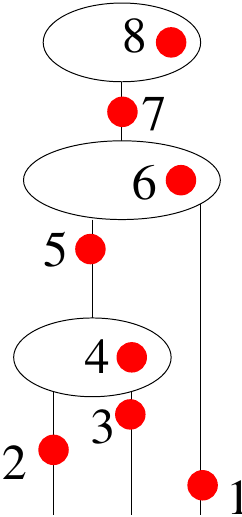}&
\includegraphics[height=2.5cm, angle=0]{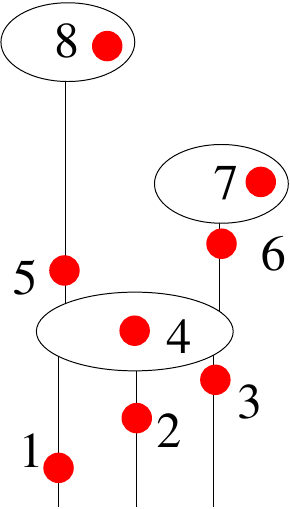}&
\includegraphics[height=2.5cm, angle=0]{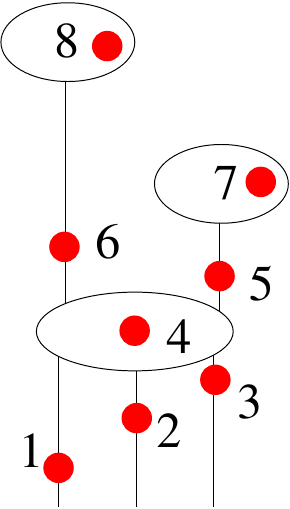}&
\includegraphics[height=2.5cm, angle=0]{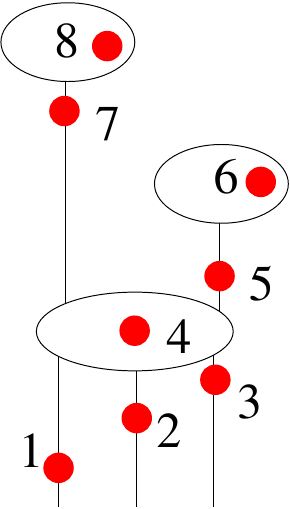}
\\ \hline $ \mu$ & $q+2+q^{-1}$      & 1 &1 &1 &1 &1 &1 &1 &1
\\ \hline $ \mu_{S_1}$ &$q+2+q^{-1}$  & 1 &1 &1 &1 &1 &0 &0 &1
\\ \hline $ \mu_{S_2}$ & $q+q^{-1}$   & 1 &1 &1 &1 &1 &0 &0 &1
\\ \hline $ \mu_{S_3}$ & $q+q^{-1}$   & 1 &0 &0 &1 &1 &0 &0 &1
\\ \hline $ \mu_{S_4}$ & $q+q^{-1}$   & 1 &0 &0 &0 &0 &0 &0 &1

\end{tabular}
\\
\begin{tabular}{c}
\end{tabular}
\caption{Computation of $G_{\Delta_3}(0;s)$.}
\label{fig:cubic}
\end{table}

The following proposition states  that the decreasing of 
$\mu_{S}(\D,m)$ with respect to $S$ that one  observes in Table
\ref{fig:cubic} is actually a general phenomenon.
    Given two elements $f,g  \in\Z_{\ge 0}[q^{\pm 1}]$, we
      write $f\ge g$ if $f-g  \in\Z_{\ge 0}[q^{\pm 1}]$.
\begin{prop}\label{prop:decreasing}
Let $(\D,m)$ be a marked floor diagram of genus 0, and $S_1\subset S_2$ be two
pairings of the set $\{1,\cdots,\eta(\D)\}$.
Then one has
\[
\mu_{S_1}(D,m)(q)\ge \mu_{S_2}(D,m)(q).
\]  
\end{prop}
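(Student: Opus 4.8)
The plan is to reduce the statement to the case where $S_2$ is obtained from $S_1$ by adjoining a single pair $\{i,i+1\}$, and then to compare $\mu_{S_1}(\D,m)$ and $\mu_{S_2}(\D,m)$ factor by factor. By induction on $\Card(S_2\setminus S_1)$, it suffices to treat $S_2 = S_1\cup\{\{i,i+1\}\}$. If $(\D,m)$ is not compatible with $S_1$, then $\mu_{S_1}(\D,m)=0$; but then $(\D,m)$ is not compatible with $S_2$ either (compatibility with a larger pairing is more restrictive), so both sides vanish and the inequality is trivial. Likewise, if $(\D,m)$ is compatible with $S_1$ but not with $S_2$, then the right-hand side is $0$ and $\mu_{S_1}(\D,m)\in\Z_{\ge 0}[q^{\pm1}]$ by the preceding lemma, so the inequality holds. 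Hence we may assume $(\D,m)$ is compatible with both $S_1$ and $S_2$.

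In that case, passing from $S_1$ to $S_2$ changes only the contribution of the two markings $m(i), m(i+1)$: all other elevators lie in the same one of the sets $E_0, E_1, E_2$ for both pairings, so their factors are identical. By the definition of a compatible marking, $\{m(i),m(i+1)\}$ is either an elevator $e$ together with an adjacent floor, or a pair of elevators $\{e,e'\}$ sharing a common adjacent floor from which both emanate or at which both end. In the first case, the elevator $e$ contributes $[\omega(e)]^2(q)$ to $\mu_{S_1}$ (as a member of $E_0$) and $[\omega(e)](q^2)$ to $\mu_{S_2}$ (as a member of $E_1$), so I must show $[k]^2(q) \ge [k](q^2)$ for every $k\in\Z_{>0}$. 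In the second case, $e$ and $e'$ each contribute $[\omega(e)]^2(q)$ and $[\omega(e')]^2(q)$ to $\mu_{S_1}$, while together they contribute $\frac{[\omega(e)]\,[\omega(e')]\,[\omega(e)+\omega(e')]}{[2]}(q)$ to $\mu_{S_2}$, so I must show
\[
[k]^2(q)\,[l]^2(q) \ \ge\ \frac{[k](q)\,[l](q)\,[k+l](q)}{[2](q)},
\qquad\text{equivalently}\qquad
[k](q)\,[l](q)\,[2](q)\ \ge\ [k+l](q),
\]
for all $k,l\in\Z_{>0}$, where the last rewriting uses that the pair-factor lies in $\Z_{\ge0}[q^{\pm1}]$.

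These two quantum-number inequalities are the technical core, and I expect them to be the only real work — everything else is bookkeeping. For $[k]^2(q)\ge[k](q^2)$: write $[k]^2(q) = \big([k](q)\big)^2$; expanding the square of $q^{(k-1)/2}+\cdots+q^{-(k-1)/2}$ gives a symmetric polynomial whose coefficient of $q^j$ is the number of ways to write $j$ as a sum of two exponents, which is positive on the whole range $|j|\le k-1$, whereas $[k](q^2) = q^{k-1}+q^{k-3}+\cdots+q^{-(k-1)}$ has coefficient $1$ at the $k$ exponents of the same parity as $k-1$ and $0$ elsewhere; a direct count shows the square's coefficient at each such exponent is at least $1$, so the difference lies in $\Z_{\ge0}[q^{\pm1}]$. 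For $[k](q)[l](q)[2](q)\ge[k+l](q)$: one checks that $[k](q)[2](q)$ has all coefficients $\ge 1$ on the exponent range $-k\le j\le k$ (indeed $[k][2] = [k+1]+[k-1]$ with overlapping supports), and multiplying by $[l](q)\in\Z_{\ge0}[q^{\pm1}]$, whose constant-term-type contribution already reproduces $[k][2]$, only increases coefficients; since $[k+l](q)$ is supported on $|j|\le k+l-1$ with all coefficients $1$, the inequality follows by comparing coefficients, invoking Lemmas \ref{lem:quatum product} and \ref{lem:quantum div} from the appendix to keep all intermediate quantities in $\Z_{\ge0}[q^{\pm1}]$. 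Assembling: the factor-by-factor comparison shows $\mu_{S_1}(\D,m)(q) - \mu_{S_2}(\D,m)(q)$ is a product of elements of $\Z_{\ge0}[q^{\pm1}]$ times a difference that is itself in $\Z_{\ge0}[q^{\pm1}]$, hence lies in $\Z_{\ge0}[q^{\pm1}]$, which is exactly the assertion $\mu_{S_1}(\D,m)\ge\mu_{S_2}(\D,m)$.
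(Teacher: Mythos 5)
Your overall structure matches the paper's: reduce by induction to the case where $S_2 = S_1\cup\{\{i,i+1\}\}$, dispose of the vanishing cases, then compare $\mu_{S_1}$ and $\mu_{S_2}$ factor by factor, which reduces the whole statement to the two quantum-number inequalities $[k]^2(q)\ge[k](q^2)$ and $[k]^2[l]^2\ge \frac{[k][l][k+l]}{[2]}$. That is exactly what the paper does (citing Lemma \ref{lem:quatum product} for the first and Corollary \ref{cor:quantum ineq2} for the second). The bookkeeping you add is fine.

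However, your treatment of the second inequality has a genuine gap. The claim that $[k]^2[l]^2\ge\frac{[k][l][k+l]}{[2]}$ is \emph{equivalent} to $[k][l][2]\ge[k+l]$ is false, and the implication you actually need — from the second statement to the first — does not follow from the reasoning you give. The coefficient-wise order on $\Z_{\ge 0}[q^{\pm 1/2}]$ (where $f\ge g$ means $f-g$ has nonnegative coefficients) is preserved under multiplication by elements of $\Z_{\ge 0}[q^{\pm 1/2}]$, but to pass from $[k][l][2]-[k+l]\ge 0$ to $[k]^2[l]^2-\frac{[k][l][k+l]}{[2]}\ge 0$ you would have to multiply by $\frac{[k][l]}{[2]}$, and this is \emph{not} in $\Z_{\ge 0}[q^{\pm 1/2}]$ when both $k$ and $l$ are odd — it is not even a Laurent polynomial then, since $[2]$ does not divide $[k][l]$. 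More to the point, $h\ge 0$ and $hR\ge 0$ do not force $R\ge 0$: take $h=1+q$ and $R=1-q+q^2$, so that $hR=1+q^3\ge 0$ while $R$ has a negative coefficient. The fact that the ``pair-factor'' $\frac{[k][l][k+l]}{[2]}$ happens to be a nonnegative Laurent polynomial does not rescue the step, since the nonnegativity of a product in no way bounds the signs of a factor. The paper's proof of Corollary \ref{cor:quantum ineq2} avoids this by a parity case analysis: it first isolates a factor of the shape $\frac{[2m]}{[2]}$ — a genuine nonnegative Laurent polynomial by Lemma \ref{lem:quantum div} — taking $2m$ to be whichever of $k$, $l$, $k+l$ is even, bounds that factor using Corollary \ref{cor:quantum ineq1} and Lemma \ref{lem:quatum product}, and only then multiplies by manifestly nonnegative quantities. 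You should replace your ``equivalently'' step with this case analysis, or simply invoke Corollary \ref{cor:quantum ineq2}. The rest of your argument — the induction, the compatibility bookkeeping, and the coefficient count for $[k]^2(q)\ge[k](q^2)$ — is sound.
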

\begin{proof}
  Since $\mu_{S_1}(D,m)\in \Z_{\ge 0}[q^{\pm 1}]$, the result 
    obviously holds if $\mu_{S_2}(D,m)=0$. If $\mu_{S_2}(D,m)\ne 0$,
    then the
    result follows from Corollary \ref{cor:quantum ineq2}, and from the
    inequality
    \[
[k](q^2)\le [2k-1](q)\le [k]^2(q),
    \]
    the last inequality holding by Lemma \ref{lem:quatum product}.
\end{proof}

The next corollary generalizes \cite[Corollary 4.5]{Bru18} to arbitrary
$h$-transverse polygon. Recall that we use the notation
\[
s_{max}=\left[ \frac{\eta(\Delta)}{2}\right].
\]
\begin{cor}\label{cor:decreasing}
  For any  $h$-transverse polygon  $\Delta$ in $\R^2$ and any
  $i\in\Z_{\ge 0}$,  one has
\[
\coef[i]{G_\Delta(0;0)}\ge
\coef[i]{G_\Delta(0;1)}\ge
\coef[i]{G_\Delta(0;2)}\ge \cdots \ge
\coef[i]{G_\Delta\left(0;s_{max}\right)}\ge 0.
\]
\end{cor}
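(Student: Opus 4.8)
The plan is to deduce Corollary \ref{cor:decreasing} directly from Proposition \ref{prop:decreasing} together with Theorem \ref{thm:psi fd}. First I would fix an $h$-transverse polygon $\Delta$ and set $n = \Card(\partial\Delta\cap\Z^2)-1$, which is precisely $n(\D)$ for any genus-$0$ floor diagram $\D$ with Newton polygon $\Delta$. For each $s$ with $0\le s\le s_{max}$, I would exploit the freedom in Theorem \ref{thm:psi fd} to choose a \emph{particular} pairing $S_s$ of order $s$, namely the nested family
\[
S_s=\bigl\{\{n-2s+1,n-2s+2\},\{n-2s+3,n-2s+4\},\dots,\{n-1,n\}\bigr\},
\]
so that $S_0\subset S_1\subset\cdots\subset S_{s_{max}}$ is an increasing chain of pairings of $\{1,\dots,n\}$. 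By Theorem \ref{thm:psi fd}, for every $s$ one has
\[
G_\Delta(0;s)(q)=\sum_{(\D,m)}\mu_{S_s}(\D,m)(q),
\]
the sum being over the same (finite) set of isomorphism classes of marked genus-$0$ floor diagrams, independently of $s$.

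Next I would apply Proposition \ref{prop:decreasing} termwise: since $S_s\subset S_{s+1}$, for every marked floor diagram $(\D,m)$ we get $\mu_{S_s}(\D,m)(q)\ge \mu_{S_{s+1}}(\D,m)(q)$, i.e.\ the difference lies in $\Z_{\ge 0}[q^{\pm1}]$. Summing over the common index set and using that $\Z_{\ge 0}[q^{\pm1}]$ is closed under addition, I conclude
\[
G_\Delta(0;s)(q)\ \ge\ G_\Delta(0;s+1)(q)
\]
for all $0\le s<s_{max}$, and also $G_\Delta(0;s_{max})(q)\ge 0$ since each $\mu_{S_{s_{max}}}(\D,m)(q)\in\Z_{\ge 0}[q^{\pm1}]$. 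Here I should note that both sides are Laurent polynomials of the \emph{same} degree $\iota_\Delta$ (stated just before Theorem \ref{thm:main1}), so that ``$\ge$'' between them is compatible with comparing coefficients of matching codegree.

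Finally I would translate the polynomial inequality into the coefficient statement. If $P(q)\ge R(q)$ in $\Z_{\ge 0}[q^{\pm1}]$ and $P,R$ have the same degree $d$, then for each $i\ge 0$ the coefficient of $q^{d-i}$ in $P-R$ is non-negative, i.e.\ $\coef[i]{P}\ge\coef[i]{R}$; applying this with $P=G_\Delta(0;s)$, $R=G_\Delta(0;s+1)$ and also with $R=0$ gives the full chain
\[
\coef[i]{G_\Delta(0;0)}\ge\coef[i]{G_\Delta(0;1)}\ge\cdots\ge\coef[i]{G_\Delta(0;s_{max})}\ge 0.
\]
There is essentially no obstacle here beyond bookkeeping: the genuine content is Proposition \ref{prop:decreasing}, which has already been established, and the only point requiring a line of care is to pin down a single nested chain of pairings $S_0\subset\cdots\subset S_{s_{max}}$ so that all the invariants $G_\Delta(0;s)$ are simultaneously computed by the \emph{same} sum indexed over the \emph{same} set of marked floor diagrams; the independence of $G_\Delta(0;s)$ on the choice of pairing (Theorem \ref{thm:psi fd}) is exactly what makes this legitimate.
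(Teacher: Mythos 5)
Your proof is correct and follows essentially the same approach as the paper: the paper's own (very terse) proof of Corollary \ref{cor:decreasing} also invokes $\mu_S(\D,m)\in\Z_{\ge0}[q^{\pm1}]$ for the non-negativity and then cites Proposition \ref{prop:decreasing} together with Theorem \ref{thm:psi fd} for the monotonicity, relying implicitly on exactly the nested chain of pairings $S_0\subset\cdots\subset S_{s_{max}}$ that you make explicit. Your write-up simply spells out the bookkeeping (choice of nested pairings, equal degrees $\iota_\Delta$, passage from the Laurent-polynomial inequality to codegree-coefficient inequalities) that the paper leaves to the reader.
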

\begin{proof}
Since  $\mu_{S}(D,m)\in \Z_{\ge 0}[q^{\pm 1}]$ for any marked floor
diagram $(\D,m)$ and any pairing $S$, we have that
$\coef[i]{G_\Delta(0;s)}\ge 0$ for
  any $s$. The decreasing of the sequence $(\coef[i]{G_\Delta(0;s)})_s$ is a direct consequence
  of Proposition \ref{prop:decreasing}  and Theorem \ref{thm:psi fd}.
\end{proof}

\begin{exa}\label{exa:G quartic}
  Thanks to Figure \ref{degree 4 g=0}, one can compute:
  \[
  \begin{array}{cclclclclclclcl}
    G_{\Delta_4}(0;0)&=&  q^{-3} & +&  13q^{-2} &+ &94q^{-1}& +& 404
    & + &94 q &+ &13 q^2& +&  q^3
\\    G_{\Delta_4}(0;1)&=&  q^{-3} & +&  11q^{-2} &+ &70q^{-1}& +& 264
    & + &70 q &+ &11 q^2& +&  q^3
\\    G_{\Delta_4}(0;2)&=&  q^{-3} & +&  9q^{-2} &+ &50q^{-1}& +& 164
    & + &50 q &+ &9 q^2& +&  q^3
\\    G_{\Delta_4}(0;3)&=&  q^{-3} & +&  7q^{-2} &+ &34q^{-1}& +& 96
    & + &34 q &+ &7 q^2& +&  q^3
\\    G_{\Delta_4}(0;4)&=&  q^{-3} & +&  5q^{-2} &+ &22q^{-1}& +& 52
    & + &22 q &+ &5 q^2& +&  q^3
\\    G_{\Delta_4}(0;5)&=&  q^{-3} & +&  3q^{-2} &+ &14q^{-1}& +& 24
    & + &14 q &+ &3 q^2& +&  q^3
  \end{array}
  \]
  
\end{exa}

A particular case of Corollary \ref{cor:decreasing} has first been
proved  in
\cite{Bru18} using 
the 
next proposition. For the sake of brevity, the
proof of
\cite[Proposition 4.3]{Bru18} has been omitted there. We close this
gap here.

\begin{figure}[h]
\begin{center}
\begin{tabular}{ccccccc}
  \includegraphics[height=2.5cm, angle=0]{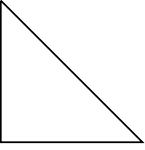}
   \put(-80,-5){\tiny{$(0,0)$}}
   \put(-90,70){\tiny{$(0,d)$}}
      \put(-10,-5){\tiny{$(d,0)$}}

& \hspace{0.5cm} &
  \includegraphics[height=2.5cm, angle=0]{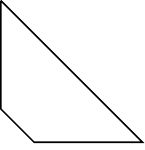}
  \put(-90,20){\tiny{$(0,a)$}}
    \put(-60,-5){\tiny{$(a,0)$}}
   \put(-90,70){\tiny{$(0,d)$}}
      \put(-10,-5){\tiny{$(d,0)$}}
& \hspace{0.5cm} &
  \includegraphics[height=2.5cm, angle=0]{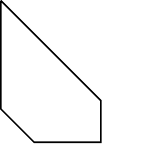}
  \put(-90,20){\tiny{$(0,a)$}}
    \put(-60,-5){\tiny{$(a,0)$}}
   \put(-90,70){\tiny{$(0,d)$}}
      \put(-18,2){\tiny{$(d-b,0)$}}
      \put(-18,20){\tiny{$(d-b,b)$}}
 & \hspace{0.5cm} &
  \includegraphics[height=2cm, angle=0]{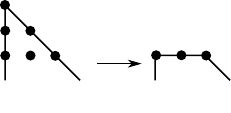}
       \put(-100,-5){$\Delta$}
      \put(-25,-5){$ \widetilde\Delta$}
    \\
      \\ a) $d\ge 2$ && b) $d-a\ge 2$
       && c) $d-\max(a,b)\ge 2$ && d) 
\end{tabular}
\end{center}
\caption{}
\label{fig:np wel}
\end{figure}
\begin{prop}[{\cite[Proposition 4.3]{Bru18}}]\label{prop:wel}
  Let $\Delta$ be one of the integer polygons depicted in Figures 
  \ref{fig:np wel}a),b), or c), 
  and let $\widetilde \Delta$ be the integer polygon
  obtained by chopping off 
  the top of $\Delta$ as depicted in Figure
  \ref{fig:np wel}d).
If $2s\le \eta(\Delta)-2$, then one has
  \[
  G_{\Delta}(0;s+1)=G_{\Delta}(0;s) - 2G_{\widetilde \Delta}(0;s).
  \]
\end{prop}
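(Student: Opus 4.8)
The strategy is to prove the identity directly from the floor diagram description of $G_\Delta(0;s)$ given by Theorem~\ref{thm:psi fd}, by comparing the sums of refined $S$-multiplicities for a pairing $S$ of order $s+1$ and one of order $s$. First I would fix a pairing $S'$ of order $s+1$ of $\{1,\dots,n(\Delta)\}$, and choose it so that the pair containing the largest indices is $\{n(\Delta)-1, n(\Delta)\}$; write $S = S' \setminus \{\{n(\Delta)-1,n(\Delta)\}\}$, a pairing of order $s$. The marking index $n(\Delta)$ always corresponds (for a genus-$0$ floor diagram) to the top of the diagram, i.e. the last floor or the top elevator; the polygons in Figure~\ref{fig:np wel}a),b),c) are precisely the ones whose top is a single horizontal edge of length $1$ or $0$, so that near the top the local picture is rigidly constrained. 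The idea is that summing $\mu_{S'}(\D,m)$ over all marked floor diagrams of Newton polygon $\Delta$ splits according to the local shape of $(\D,m)$ around the marked points $n(\Delta)-1$ and $n(\Delta)$, and each such local configuration contributes a factor that can be rewritten using the three-term decomposition $G_\Delta(0;s+1) = G_\Delta(0;s) - 2G_{\widetilde\Delta}(0;s)$.

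The key computational input is the identity relating the extra pairing factor to the untwisted one. Concretely, using the quantum-number manipulations recalled in Appendix~\ref{app:quantum int} (Lemmas~\ref{lem:quatum product}, \ref{lem:quantum div}, and Corollary~\ref{cor:quantum ineq2}), one checks that when the pair $\{n(\Delta)-1, n(\Delta)\}$ is realized by an elevator $e$ of weight $w$ and its adjacent top floor, the factor $[w](q^2)$ replacing $[w]^2(q)$ satisfies $[w]^2(q) - [w](q^2) = $ (something expressible as $2$ times a genuine refined multiplicity of a smaller diagram), and similarly for the two-elevators case. The matching between ``the diagrams contributing to $G_\Delta(0;s)$ but whose last-pair contribution gets corrected'' and ``the diagrams contributing to $G_{\widetilde\Delta}(0;s)$'' is the combinatorial heart: chopping off the top of $\Delta$ as in Figure~\ref{fig:np wel}d) removes exactly one floor (or shortens the top edge), which corresponds on the floor-diagram side to deleting the topmost floor together with the elevators attached to it, and one must verify that the marked floor diagrams of $\widetilde\Delta$ are in bijection with the ``defect'' terms, the factor $2$ arising from the two symmetric ways (emanating/ending, or left/right) the paired elevators can sit. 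I would organize this as: (i) classify the local top configurations of a marked genus-$0$ floor diagram compatible with $S'$; (ii) in each case write $\mu_{S'}(\D,m) = \mu_S(\D,m) - 2\,\mu_S(\D',m')$ where $(\D',m')$ is the corresponding marked floor diagram of $\widetilde\Delta$ (or $0$ when no such diagram exists, e.g. when the defect would disconnect the graph — here connectedness and the $2s \le n(\Delta)-2$ hypothesis are used to guarantee enough room); (iii) sum over all $(\D,m)$ and invoke Theorem~\ref{thm:psi fd} three times.

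The main obstacle I expect is step (ii), and specifically making the bijection between defect terms and marked floor diagrams of $\widetilde\Delta$ both well-defined and exhaustive. Several things can go wrong: the topmost floor might carry several elevators, so ``removing it'' is not literally the operation of deleting one vertex; the hypothesis on the shape of $\Delta$ (top edge of length $\le 1$) is exactly what keeps the divergence bookkeeping at the top floor under control so that $d_l\widetilde\Delta, d_r\widetilde\Delta$ are obtained from those of $\Delta$ by a single predictable modification; and the pairing $S$ on $\{1,\dots,n(\Delta)\}$ must restrict correctly to a pairing of $\{1,\dots,n(\widetilde\Delta)\}$, which forces care about how the index set shrinks (it shrinks by $2$, consistently with $n(\widetilde\Delta) = n(\Delta)-2$ coming from removing one interior-boundary-lattice-point worth of markings). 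Once the local dictionary is pinned down, the quantum-number identity itself is a short computation, entirely parallel to the inequality $[k](q^2) \le [2k-1](q) \le [k]^2(q)$ already used in the proof of Proposition~\ref{prop:decreasing}; the real work is the combinatorial matching, which is why I would spend most of the write-up setting up the top-configuration classification precisely and treating the elevator/floor and two-elevator cases separately, with a careful figure analogous to Figure~\ref{fig:contraction}.
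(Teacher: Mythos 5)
There is a genuine gap: your ``key computational input'' --- an identity of the form $[w]^2(q) - [w](q^2) = 2\times(\text{refined multiplicity of a smaller diagram})$ applied at an elevator of general weight $w$ near the top --- is not what makes the statement true, and no such identity holds in a usable form. The paper's proof avoids any quantum-number defect altogether. For the three polygons in Figure~\ref{fig:np wel}a),b),c) one has $d_t(\Delta)=0$ (no sinks) and every floor has divergence $1$; hence any maximal floor has exactly one incoming elevator of weight exactly~$1$, and for $(\D,m)$ in the set $A$ (where at least one of $m(n(\Delta)-1),m(n(\Delta))$ is an elevator) the only possibility is that $m(n(\Delta))$ is a maximal floor and $m(n(\Delta)-1)$ is its weight-$1$ incoming elevator, so $\mu_{\widetilde S}(\D,m)=\mu_S(\D,m)$ \emph{exactly}, with no correction term whatsoever. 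Your plan to classify top configurations and extract a $2\cdot(\text{smaller diagram})$ defect from each would find nothing there.

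The term $-2\,G_{\widetilde\Delta}(0;s)$ comes instead entirely from the complementary set $B$, consisting of those $(\D,m)$ for which \emph{both} $m(n(\Delta)-1)$ and $m(n(\Delta))$ are floors (this happens when the floor diagram branches near the top into two maximal floors). On $B$ one has $\mu_{\widetilde S}(\D,m)=0$ by Definition~\ref{def:refined mult s}, since two floors can never form a compatible pair; and chopping off those two top floors yields a $2$-to-$1$ correspondence (the factor $2$ being the two ways to assign the indices $n(\Delta)-1,n(\Delta)$ to the two floors) onto marked floor diagrams of $\widetilde\Delta$, with preserved $\mu_S$. So the factor $2$ is a marking-counting phenomenon from swapping two floors, not --- as you suggest --- a symmetry of ``emanating/ending, or left/right'' elevators, and the bijection is floor-based rather than elevator-based. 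Your high-level framing (pin $S'\supset S$ with top pair $\{n(\Delta)-1,n(\Delta)\}$, split the sum by local shape at the top, invoke Theorem~\ref{thm:psi fd} three times) is sound, but the mechanism you expect in step (ii) is the wrong one; once you replace ``quantum defect at a weight-$w$ elevator'' with ``partition into $A\sqcup B$ by whether both top marks land on floors,'' the proof closes quickly.
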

\begin{proof}
   Let $S$ be a pairing of order $s$ of
  the set $\{1,\cdots,\eta(\Delta)-2\}$,
  and let $\widetilde S=S\cup\{\eta(\Delta)-1,\eta(\Delta)\}$.
  Let $A\sqcup B$ be the partition of
  the set of marked floor diagrams $(\D,m)$
  with Newton polygon
  $\Delta$ and genus 0 such that
  \[
   (\D,m)\in B\Longleftrightarrow m(\{\eta(\Delta)-1,\eta(\Delta)\})\subset V(\D),
  \]
  see Figure \ref{fig:QWel}.  
\begin{figure}[h]
\begin{center}
\begin{tabular}{ccc}
  \includegraphics[height=2cm, angle=0]{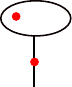}
  & \hspace{1cm} &
  \includegraphics[height=2cm, angle=0]{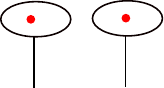}

      \\ a) $A$ && b) $B$
\end{tabular}
\end{center}
\caption{A partition of the set of marked floor diagrams, the red dots
represent $m(\eta(\Delta)-1)$ and $m(\eta(\Delta))$}
\label{fig:QWel}
\end{figure}
Then by Theorem \ref{thm:psi fd}, one has
\[
G_\Delta(0;s)(q)=\sum_{(\D,m)\in A\cup B}\mu_S(\D,m)(q)  \qquad\mbox{and}
\qquad G_\Delta(0;s+1)(q)=\sum_{(\D,m)\in A}\mu_{\widetilde S}(\D,m)(q).
\]
By chopping off 
the two top floors of the floor diagrams from the set
$B$, it follows again from Theorem \ref{thm:psi fd} that
\[
G_{\widetilde \Delta}(0;s)(q)=\frac{1}{2}\sum_{(\D,m)\in  B}\mu_S(\D,m)(q).
\]
Since the divergence of any top floor of $\D$ is 1, we have
that $\mu_{\widetilde S}(\D,m)(q)=\mu_{ S}(\D,m)(q)$ for any
marked floor diagram $(\D,m)\in A$. Hence we have
\begin{align*}
  G_\Delta(0;s+1)(q)&=\sum_{(\D,m)\in A}\mu_{S}(\D,m)(q)
  \\ & = \sum_{(\D,m)\in A\cup B}\mu_S(\D,m)(q)  - \sum_{(\D,m)\in  B}\mu_S(\D,m)(q),
\end{align*}
which concludes the proof.
\end{proof}


\section{Codegrees}\label{sec:codeg}

\subsection{Codegree of a floor diagram}
Recall that we use the notation
\[
\iota_\Delta=\Card(\Int(\Delta)\cap \Z^2).
\]
We define the \emph{codegree} of a floor diagram $\D$ of genus $g$ with Newton
polygon $\Delta$ by
\[
\codeg(\D)= \iota_\Delta-g - \deg(\D).
\]
By {\cite[Proposition 2.11]{IteMik13}}, one has
$\deg(\D)\le \iota_\Delta-g$, and so
$\codeg(\D)\ge 0$.
Furthermore, the codegree of $\D$ is zero if and only if
\begin{itemize}
\item the order $\preccurlyeq$ is total on the set of floors of $\D$;
  \item any edge in $E^0(\D)$ connects two consecutive vertices;
  \item elevators in $E^{-\infty}(\D)$ are all adjacent to the minimal floor
    of $\D$, and elevators in $E^{+\infty}(\D)$ are all adjacent to the maximal floor
    of $\D$;
\item the function $l:V(\D)\to d_l$ is decreasing, and the function
  $r:V(\D)\to d_r$ is increasing.
\end{itemize}
With this characterization, one sees easily that there exists exactly
${\iota_\Delta\choose g}$ marked floor diagrams of
genus $g$ with Newton 
polygon $\Delta$ and codegree 0 (see {\cite[Proposition
    2.11]{IteMik13} and \cite[Proposition 4.10]{BlGo14}}).
\begin{exa}
Figures \ref{fig:ex FD}a) and b) depict  the
only floor diagrams of codegree 0 with Newton polygon~$\Delta_3$ and
genus 1 and 0, respectively.
The only codegree 0 floor diagram with
Newton polygon depicted in Figure \ref{fig:htrans}b)  and genus 0 is
depicted in Figure
\ref{fig:ex codeg}a). All codegree 0 floor diagrams with
Newton polygon depicted in Figure \ref{fig:htrans}b)  and genus 1 are
depicted in Figures 
\ref{fig:ex codeg}b), c),  d), e), f), and g). Note that the floor
diagram depicted in Figure \ref{fig:ex codeg}b) admits a single
marking, while the floor
diagrams depicted in Figures
 \ref{fig:ex codeg}c),  d), e), f), and g)
admit exactly two different markings.

\begin{figure}[h!]
\begin{center}
\begin{tabular}{ccccccccccccc}
  \includegraphics[height=4cm, angle=0]{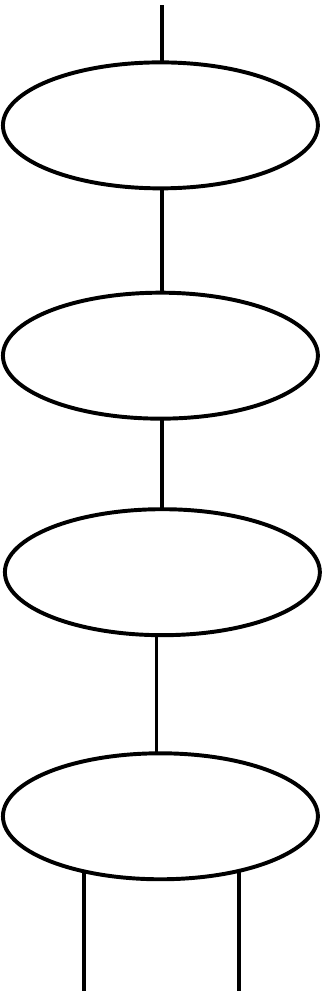}
   \put(-28,31){\small{$4$}}
  \put(-28,58){\small{$5$}}
  \put(-28,84){\small{$5$}}
  \put(-33,17){\small{$1$}}
  \put(-18,17){\small{$-1$}}
   \put(-33,46){\small{$1$}}
  \put(-11,46){\small{$0$}}
   \put(-33,70){\small{$0$}}
  \put(-11,70){\small{$0$}}
   \put(-33,97){\small{$-2$}}
  \put(-11,97){\small{$2$}}
 & \hspace{0.4cm} &
  \includegraphics[height=4cm, angle=0]{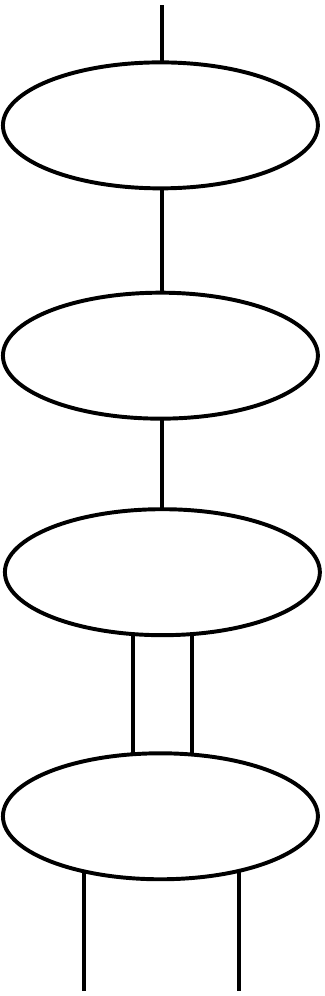}
   \put(-31,31){\small{$2$}}
    \put(-13,31){\small{$2$}}
 \put(-28,58){\small{$5$}}
  \put(-28,84){\small{$5$}}
  \put(-33,17){\small{$1$}}
  \put(-18,17){\small{$-1$}}
   \put(-33,46){\small{$1$}}
  \put(-11,46){\small{$0$}}
   \put(-33,70){\small{$0$}}
  \put(-11,70){\small{$0$}}
   \put(-33,97){\small{$-2$}}
  \put(-11,97){\small{$2$}}
 & \hspace{0.4cm} &
  \includegraphics[height=4cm, angle=0]{Figures1/FDht3.pdf}
    \put(-31,31){\small{$1$}}
    \put(-13,31){\small{$3$}}
  \put(-28,58){\small{$5$}}
  \put(-28,84){\small{$5$}}
  \put(-33,17){\small{$1$}}
  \put(-18,17){\small{$-1$}}
   \put(-33,46){\small{$1$}}
  \put(-11,46){\small{$0$}}
   \put(-33,70){\small{$0$}}
  \put(-11,70){\small{$0$}}
   \put(-33,97){\small{$-2$}}
  \put(-11,97){\small{$2$}}
 & \hspace{0.4cm} &
  \includegraphics[height=4cm, angle=0]{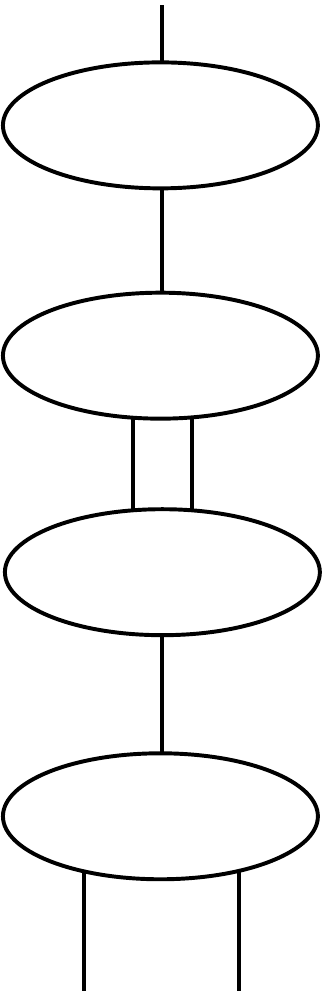}
   \put(-28,31){\small{$4$}}
      \put(-31,58){\small{$1$}}
    \put(-13,58){\small{$4$}}
  \put(-28,84){\small{$5$}}
  \put(-33,17){\small{$1$}}
  \put(-18,17){\small{$-1$}}
   \put(-33,46){\small{$1$}}
  \put(-11,46){\small{$0$}}
   \put(-33,70){\small{$0$}}
  \put(-11,70){\small{$0$}}
   \put(-33,97){\small{$-2$}}
  \put(-11,97){\small{$2$}}
 & \hspace{0.4cm} &
  \includegraphics[height=4cm, angle=0]{Figures1/FDht4.pdf}
   \put(-28,31){\small{$4$}}
       \put(-31,58){\small{$2$}}
    \put(-13,58){\small{$3$}}
  \put(-28,84){\small{$5$}}
  \put(-33,17){\small{$1$}}
  \put(-18,17){\small{$-1$}}
   \put(-33,46){\small{$1$}}
  \put(-11,46){\small{$0$}}
   \put(-33,70){\small{$0$}}
  \put(-11,70){\small{$0$}}
   \put(-33,97){\small{$-2$}}
  \put(-11,97){\small{$2$}}
 & \hspace{0.4cm} &
  \includegraphics[height=4cm, angle=0]{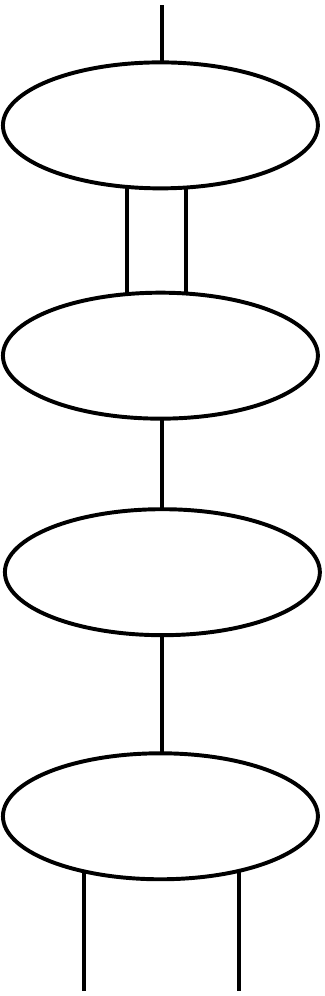}
   \put(-28,31){\small{$4$}}
  \put(-28,58){\small{$5$}}
      \put(-31,84){\small{$1$}}
    \put(-13,84){\small{$4$}}
  \put(-33,17){\small{$1$}}
  \put(-18,17){\small{$-1$}}
   \put(-33,46){\small{$1$}}
  \put(-11,46){\small{$0$}}
   \put(-33,70){\small{$0$}}
  \put(-11,70){\small{$0$}}
   \put(-33,97){\small{$-2$}}
  \put(-11,97){\small{$2$}}
 & \hspace{0.4cm} &
  \includegraphics[height=4cm, angle=0]{Figures1/FDht5.pdf}
   \put(-28,31){\small{$4$}}
  \put(-28,58){\small{$5$}}
      \put(-31,84){\small{$2$}}
    \put(-13,84){\small{$3$}}
  \put(-33,17){\small{$1$}}
  \put(-18,17){\small{$-1$}}
   \put(-33,46){\small{$1$}}
  \put(-11,46){\small{$0$}}
   \put(-33,70){\small{$0$}}
  \put(-11,70){\small{$0$}}
   \put(-33,97){\small{$-2$}}
  \put(-11,97){\small{$2$}}

  \\ a)&&b)&&c)&&d)&&e)&&f)&&g)
\end{tabular}
\end{center}
\caption{Codegree 0 floor diagrams of genus 0 and 1 with
Newton polygon from Figure \ref{fig:htrans}b).}
\label{fig:ex codeg}
\end{figure}
\end{exa}

Throughout the remainder of
the text, we  will make an extensive use of
the following four operations on a  floor diagram $\D$:
\begin{enumerate}
\item[$A^+$:] Suppose that there exist two floors
  $v_1$ and $v_2$ of  $\D$ connected by an elevator
  $e_1$ from $v_1$ to $v_2$, and an additional elevator $e_2$
  originating from $v_1$ but not adjacent to $v_2$. Then construct a
  new floor diagram  $\D'$ out of $\D$ as depicted in Figure
  \ref{fig:operationA}a).
  
\item[$A^-$:] Suppose that there exist two floors
  $v_1$ and $v_2$ of  $\D$ connected by an elevator
  $e_1$ from $v_1$ to $v_2$, and an additional elevator $e_2$
  ending at $v_2$ but not adjacent to $v_1$. Then construct a
  new floor diagram   $\D'$ out of $\D$ as depicted in Figure
  \ref{fig:operationA}b).
\begin{figure}[h]
\begin{center}
\begin{tabular}{ccc}
  \includegraphics[height=3cm, angle=0]{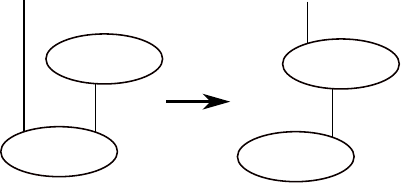}
   \put(-136,30){\tiny{$\omega(e_1)$}}
  \put(-203,58){\tiny{$\omega(e_2)$}}
   \put(-25,30){\tiny{$\omega(e_1)+\omega(e_2)$}}
  \put(-70,78){\tiny{$\omega(e_2)$}}
  \put(-160,-20){$\D$}
  \put(-55,-20){$\D'$}
 & \hspace{1cm} &
  \includegraphics[height=3cm, angle=0]{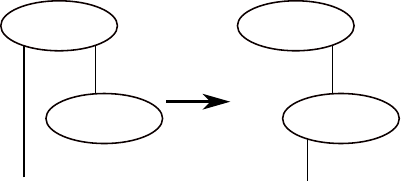}
   \put(-136,55){\tiny{$\omega(e_1)$}}
  \put(-203,48){\tiny{$\omega(e_2)$}}
   \put(-25,55){\tiny{$\omega(e_1)+\omega(e_2)$}}
  \put(-70,8){\tiny{$\omega(e_2)$}}
  \put(-160,-20){$\D$}
  \put(-55,-20){$\D'$}

\\  \\ a) Operation $A^+$ &&b) Operation $A^-$ 
\end{tabular}
\end{center}
\caption{Operations $A$ on floor diagrams.}
\label{fig:operationA}
\end{figure}

    \item[$B^l$:] Suppose that there exist two
    consecutive floors 
    $v_1 \preccurlyeq v_2$ of  $\D$ such that $l(v_1)<l(v_2)$.
    Then construct a
  new floor diagram   $\D'$ out of $\D$ as depicted in Figure
  \ref{fig:operationB}a), where $e$ is any elevator adjacent to $v_1$ and $v_2$.

    \item[$B^r$:] Suppose that there exist two
    consecutive floors 
    $v_1 \preccurlyeq v_2$ of  $\D$ such that $r(v_1)>r(v_2)$.
    Then construct a
  new floor diagram   $\D'$ out of $\D$ as depicted in Figure
  \ref{fig:operationB}b), where $e$ is any elevator adjacent to $v_1$ and $v_2$.
\begin{figure}[h]
\begin{center}
\begin{tabular}{ccc}
  \includegraphics[height=2.5cm, angle=0]{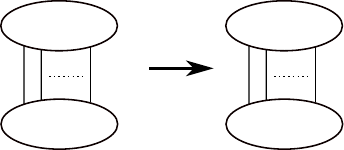}
   \put(-113,26){\tiny{$\omega(e)$}}
  \put(-155,58){\tiny{$l(v_2)$}}
   \put(-155,11){\tiny{$l(v_1)$}}
   \put(-8,26){\tiny{$\omega(e)+l(v_2)-l(v_1)$}}
  \put(-50,58){\tiny{$l(v_1)$}}
   \put(-50,11){\tiny{$l(v_2)$}}
   \put(-140,-20){$\D$}
  \put(-35,-20){$\D'$}
& \hspace{1.8cm} &
  \includegraphics[height=2.5cm, angle=0]{Figures1/OperationB.pdf}
   \put(-113,26){\tiny{$\omega(e)$}}
  \put(-132,58){\tiny{$r(v_2)$}}
   \put(-132,11){\tiny{$r(v_1)$}}
   \put(-8,26){\tiny{$\omega(e)+r(v_1)-r(v_2)$}}
  \put(-26,58){\tiny{$r(v_1)$}}
   \put(-26,11){\tiny{$r(v_2)$}}
   \put(-140,-20){$\D$}
  \put(-35,-20){$\D'$}
  \\
  \\a) Operation $B^l$ &&b)  Operation $B^r$ 
\end{tabular}
\end{center}
\caption{Operations $B$ on floor diagrams.}
\label{fig:operationB}
\end{figure}

\end{enumerate}

The following  lemma is straightforward.
\begin{lemma}\label{lem:codegree}
Genus and  Newton polygon  are invariant under
  operations $A^\pm$, $B^l$, and $B^r$.
  Furthermore, the codegree  decreases by $w(e_2)$ under operations $A^\pm$,
   by $l(v_2)-l(v_1)$ under operations $B^l$, and 
 by $r(v_1)-r(v_2)$ under operations $B^r$.
\end{lemma}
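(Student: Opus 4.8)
The statement to establish is Lemma \ref{lem:codegree}: genus and Newton polygon are invariant under the four operations $A^{\pm}$, $B^l$, $B^r$, and the codegree decreases by the specified amounts. I would prove this by inspecting each operation separately, using the explicit pictures in Figures \ref{fig:operationA} and \ref{fig:operationB} together with the definition $\codeg(\D)=\Card(\Int(\Delta)\cap\Z^2)-g-\deg(\D)$ where $\deg(\D)=\sum_{e\in E(\D)}(\omega(e)-1)$. Since $\Card(\Int(\Delta)\cap\Z^2)$ depends only on $\Delta$, once I check that $\Delta$ and $g$ are unchanged it suffices to track how $\deg(\D)$ changes, and the codegree changes by the opposite amount.

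\emph{Genus and Newton polygon.} For each operation I would verify that $\D'$ is still a floor diagram with the same Newton polygon $\Delta$: the number of vertices, the multisets $d_l\Delta$, $d_r\Delta$, and the number of sources and sinks $d_b\Delta$, $d_t\Delta$ are visibly preserved, since none of the operations creates or destroys floors or monovalent edges — they only reroute or re-weight a bounded edge. One must check that the divergence condition $\dive(v)=r(v)-l(v)$ still holds at every floor; for $A^{\pm}$ the rerouted elevator $e_2$ leaves the divergence of each vertex unchanged because the same weight is added and removed appropriately (or, as in the picture, $e_1$ is thickened by $\omega(e_2)$ while $e_2$ is moved so the net flow at $v_1,v_2$ is constant); for $B^l$ (resp. $B^r$) the labels $l(v_1),l(v_2)$ (resp. $r(v_1),r(v_2)$) are swapped and the weight of $e$ is adjusted by exactly $l(v_2)-l(v_1)$ (resp. $r(v_1)-r(v_2)$) so that both divergence equations rebalance. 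Acyclicity and connectedness are clearly maintained since we only modify edges between already-connected, already-comparable floors. The first Betti number is unchanged because the number of edges and vertices is unchanged, so the genus $g$ is invariant.

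\emph{Change in codegree.} For operation $A^+$: the elevator $e_1$ of weight $\omega(e_1)$ becomes an elevator of weight $\omega(e_1)+\omega(e_2)$, the elevator $e_2$ keeps weight $\omega(e_2)$ but is rerouted, and all other edges are untouched; so $\deg(\D')-\deg(\D)=\bigl((\omega(e_1)+\omega(e_2)-1)-(\omega(e_1)-1)\bigr)=\omega(e_2)$, hence $\codeg$ drops by $\omega(e_2)$. The computation for $A^-$ is identical by the mirror picture. For operation $B^l$: the only changed edge is $e$, whose weight goes from $\omega(e)$ to $\omega(e)+l(v_2)-l(v_1)$, so $\deg$ increases by $l(v_2)-l(v_1)$ and $\codeg$ decreases by the same; the hypothesis $l(v_1)<l(v_2)$ guarantees this is a positive quantity. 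Operation $B^r$ is symmetric, with decrease $r(v_1)-r(v_2)>0$. This exhausts all cases; the main (very mild) obstacle is simply being careful that the divergence conditions rebalance correctly under each local move, which is a direct check from the figures.
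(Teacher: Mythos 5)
Your proof is correct, and since the paper itself omits the argument (stating only that the lemma is straightforward), your case-by-case verification from the figures and the definitions of $\deg$ and $\codeg$ is exactly the intended argument — tracking that $\Delta$ and $g$ are preserved and that only one edge-weight changes, by $\omega(e_2)$, $l(v_2)-l(v_1)$, or $r(v_1)-r(v_2)$ respectively.
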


The next lemma is an example of application of Lemma \ref{lem:codegree}.
For the sake of simplicity, we  state and prove it only for
floor diagrams with constant divergence. Generalizing it to  floor
diagrams with any $h$-transverse Newton polygon
presents no difficulties other than technical.
\begin{lemma}\label{lem:min}
Let $\D$ be a floor diagram with constant divergence $n\in  \Z$.
If $\D$ has $k$ minimal floors,
then one has that
\[
\codeg(\D)\ge (k-1)\ \left(\Card(E^{-\infty}(\D)) -n\frac{k}{2}\right).
\]
\end{lemma}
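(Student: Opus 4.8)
The statement bounds the codegree of a floor diagram $\D$ with constant divergence $n$ in terms of its number $k$ of minimal floors. My plan is to use the operations $A^\pm$ and $B^l$, $B^r$ from Lemma~\ref{lem:codegree} to transform $\D$ into a codegree-$0$ diagram while tracking the total codegree drop. Since codegree is nonnegative, the total drop is a lower bound for $\codeg(\D)$, so it suffices to exhibit a sequence of operations whose codegree decreases sum to at least $(k-1)(\Card(E^{-\infty}(\D))-nk/2)$.

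First I would set up the combinatorics of the minimal floors. If $v_1,\dots,v_k$ are the minimal floors, then each $v_j$ has no incoming bivalent elevators, so all incoming edges are sources from $E^{-\infty}(\D)$; writing $a_j$ for the number of sources at $v_j$, the divergence condition $\dive(v_j)=n$ (here $l=r$ so $\dive=n$ everywhere when the divergence is the constant $n$; more precisely $r(v_j)-l(v_j)=n$) forces $a_j - (\text{weighted outflow of }v_j) = n$, hence the total outgoing weight from $v_j$ is $a_j-n$, and $\sum_j a_j=\Card(E^{-\infty}(\D))$. The idea is then: the minimal floors are pairwise incomparable under $\preccurlyeq$, and any codegree-$0$ diagram has a totally ordered set of floors with all sources attached to the single minimal floor; so in the reduction process the $k-1$ ``extra'' minimal floors must each be merged upward past an elevator. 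Each time we apply operation $A^+$ to absorb an elevator emanating from a non-bottom-most minimal floor $v_j$ into an elevator going to the next floor, the codegree drops by the weight of that absorbed elevator. Summing the weights of all elevators that need to be pushed through to make $v_j$ non-minimal, and then summing over the $k-1$ floors that are ``too low,'' should produce the claimed bound, with the factor $a_j-n$ (outgoing weight at $v_j$) appearing and $\sum(a_j-n)$ over $k-1$ of the floors giving something comparable to $\Card(E^{-\infty}(\D))-nk/2$ after a symmetry/averaging argument (choosing which $k-1$ of the $k$ minimal floors to count, and bounding by the average).

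The main obstacle I anticipate is the bookkeeping: making precise which operations are legal at each stage (operation $A^+$ requires an elevator $e_1$ to the next floor \emph{and} a second elevator $e_2$ from $v_1$ not adjacent to $v_2$, so one must verify such a configuration exists whenever $v_j$ is still minimal and not the unique bottom floor), and ensuring the codegree drops add up without double-counting. The cleanest route is probably not to fully reduce to codegree $0$, but to argue more directly: pick the minimal floor, say $v_1$, with the smallest number $a_1$ of adjacent sources, so $a_1 \le \Card(E^{-\infty}(\D))/k$; then for each of the other $k-1$ minimal floors $v_j$, the elevators emanating from $v_j$ carry total weight $a_j-n \ge$ (something), and each such $v_j$ can be merged into the rest of the diagram via a sequence of $A^+$ operations whose total codegree drop is at least $a_j-n$ — actually at least the outgoing weight, which is $\ge a_1-n$ is the wrong direction, so instead I would bound using $a_j - n \ge$ the outgoing weight and note these $k-1$ merges are independent, giving $\codeg(\D)\ge \sum_{j\ne j_0}(a_j-n)$ for any choice of excluded index $j_0$; optimizing $j_0$ (excluding the floor with largest $a_j$, hence $\sum_{j\ne j_0} a_j \ge \Card(E^{-\infty}(\D))\cdot\frac{k-1}{k}$) does not immediately give the stated form, so the right move is to observe the bound holds with $a_{j_0}$ replaced by any $a_j$, average over $j_0$, and get $\codeg(\D) \ge \frac{1}{k}\sum_{j_0}\sum_{j\ne j_0}(a_j-n) = \frac{k-1}{k}\sum_j(a_j-n) = \frac{k-1}{k}(\Card(E^{-\infty}(\D)) - nk) $, which is off by a factor; so the genuine content must be that each merge of $v_j$ actually contributes more, namely the full incoming count $a_j$ worth minus overlap — I would reconcile this by a careful accounting showing the correct per-floor drop is $\big(\tfrac{1}{k}\Card(E^{-\infty}(\D)) - \tfrac{n}{2}\big)$-ish, i.e. the factor $k/2$ rather than $k$ comes from the fact that after merging the codegree-$0$ target has its sources \emph{and} the vertex ordering both contributing, effectively halving the naive per-floor cost. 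Pinning down this factor of two precisely is the delicate part, and I would do it by analyzing explicitly the codegree-$0$ normal form and comparing source-distribution statistics before and after, rather than by blindly chaining operations.
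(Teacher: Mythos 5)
Your high-level strategy --- track codegree drops under the operations of Lemma~\ref{lem:codegree} while removing minimal floors one at a time --- matches the paper's, and your setup at the minimal floors is correct: writing $a_j$ for the number of sources attached to the minimal floor $v_j$ and $U=\Card(E^{-\infty}(\D))$, after moving all sources down to minimal floors (via $A^-$, which only decreases codegree) the outgoing weight at $v_j$ is $a_j-n$ and $\sum_j a_j=U$. But the central per-floor estimate is wrong, and you flag the resulting mismatch yourself without fixing it: the codegree cost of demoting a minimal floor is \emph{not} its outgoing weight $a_j-n$, and averaging over the excluded index $j_0$ in $\sum_{j\ne j_0}(a_j-n)$ only yields $\tfrac{k-1}{k}(U-nk)$, as you compute --- this is weaker than the claim and there is no route from there to the stated bound. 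The speculation at the end about a ``factor of two from the vertex ordering'' is not the mechanism, and the proposal stops short of a proof.

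The missing idea is that removing one of the $k$ minimal floors costs $U-n(k-1)$, which is typically much larger than $a_j-n$. After normalizing (via $A^\pm$) so that all of $v_1,\dots,v_k$ point to a common floor $v$, one makes $v_k$ non-minimal by redirecting every elevator $v_j\to v$ $(j<k)$ so that it ends at $v_k$; this is $k-1$ applications of $A^-$ with total drop $\sum_{j<k}(a_j-n)$. One then slides the $a_k$ sources of $v_k$ down to $v_{k-1}$, a further $a_k$ applications of $A^-$. The total drop is $\sum_{j<k}(a_j-n)+a_k = U-n(k-1)$, and $U$ together with the constant divergence is preserved, so one can iterate (the paper inducts on $k$):
\[
\codeg(\D)\ \ge\ \sum_{j=2}^{k}\bigl(U-n(j-1)\bigr)\ =\ (k-1)U-n\binom{k}{2}\ =\ (k-1)\Bigl(U-\tfrac{nk}{2}\Bigr).
\]
So the factor $k/2$ is just the closed form of an arithmetic-progression sum, not a halving phenomenon. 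A secondary issue in the write-up: you reach for $A^+$ to push a minimal floor upward, but $A^+$ only relocates an \emph{outgoing} elevator and never creates a non-source incoming edge at a floor, so it cannot make a minimal floor non-minimal; the relevant move for demoting $v_k$ is $A^-$.
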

\begin{proof}
Denote by $v_1,\cdots,v_k$ these minimal floors, and by $u_i$ the
number of elevators in $E^{-\infty}(\D)$ to which $v_i$ is adjacent.
By a
finite succession of operations $A^-$ and applications 
of Lemma
\ref{lem:codegree},
we may assume that
\[
\sum_{i=1}^ku_i=\Card(E^{-\infty}(\D)).
\]
Next, by a
finite succession of operations $A^\pm$  and applications 
of Lemma
\ref{lem:codegree}, we may assume that there exists $v\in V(\D)$
greater than all floors
$v_1,\cdots,v_k$, and such that any
elevator in $E(\D)\setminus E^{-\infty}(\D)$ adjacent to $v_i$ is also adjacent to $v$, see
Figure \ref{fig:op lower}a).
\begin{figure}[h]
\begin{center}
\begin{tabular}{ccc}
  \includegraphics[height=3.5cm, angle=0]{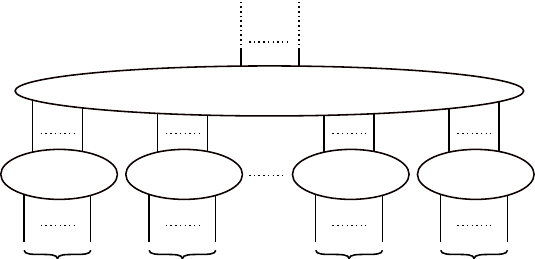}
   \put(-140,-10){$u_2$}
  \put(-25,-10){$u_k$}
   \put(-190,-10){$u_1$}
  \put(-80,-10){$u_{k-1}$}
& \hspace{1cm} &
  \includegraphics[height=3.5cm, angle=0]{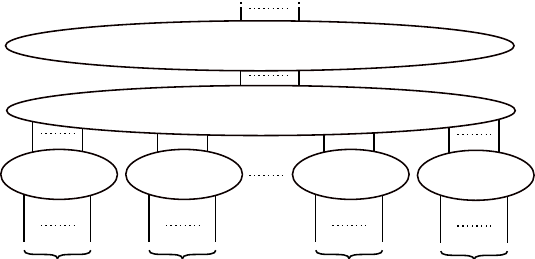}
   \put(-140,-10){$u_2$}
   \put(-190,-10){$u_1$}
    \put(-80,-10){$u_{k-2}$}
\put(-45,-10){$u_{k-1}+u_k$}
  \\
  \\a) $\D$ &&b)  $\D'$ 
\end{tabular}
\end{center}
\caption{}
\label{fig:op lower}
\end{figure}
This implies in particular that if $e_{i,1},\cdots,e_{i,k_i}$ are the
elevators in $E^0(\D)$ adjacent to $v_i$, then one has
\[
\sum_{j=1}^{k_i}\omega(e_{i,j})=u_i-n.
\]
By a
finite succession of operations $A^-$  and applications 
of Lemma
\ref{lem:codegree}, we now construct a floor diagram
$\D'$ with $k-1$ minimal floors and satisfying
(see Figure \ref{fig:op lower}b)
\[
\codeg(\D)=\codeg(\D')+ \Card(E^{-\infty}(\D))  - n(k-1).
\]
Now the result follows by induction on $k$.
\end{proof}

\subsection{Degree of codegree coefficients}
Here we prove a couple of intermediate results regarding the degree of  codegree $i$
coefficients of some families of Laurent polynomials.
Given two integers $k,l\ge 0$, we define 
\[
F(k,l)=\sum_{\substack{i_1+i_2+\cdots+ i_k=l\\ i_1,\cdots,i_k\ge
    1}}\quad \prod_{j=1}^k i_j
\qquad\mbox{and}\qquad \Phi_l(k)=F(k,k+l).
\]

\begin{exa}\label{ex:G0}
  One computes easily that
  \[
  \Phi_0(k)=1
\qquad\mbox{and}\qquad \Phi_1(k)=2k.
  \]
\end{exa}

\begin{lemma}
  For any fixed $l\in\Z_{\ge 0}$, the function
  $\Phi_l:k\in\Z_{\ge 0}\mapsto \Phi_l(k)$ is polynomial of degree $l$.
\end{lemma}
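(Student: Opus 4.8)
The plan is to produce a closed-form expression for $\Phi_l(k)$ and read off polynomiality and the degree from it. First I would perform the change of variables $i_j = m_j+1$ in the definition $\Phi_l(k) = F(k,k+l)$: the constraint $i_1+\cdots+i_k = k+l$ with $i_j \geq 1$ becomes $m_1+\cdots+m_k = l$ with $m_j \geq 0$, so that
\[
\Phi_l(k) = \sum_{\substack{m_1+\cdots+m_k=l\\ m_1,\dots,m_k\ge 0}} \prod_{j=1}^k (m_j+1).
\]
The right-hand side is precisely the coefficient of $x^l$ in $g(x)^k$, where $g(x) = \sum_{m\ge 0}(m+1)x^m$, since multiplying $k$ copies of $g$ and extracting the $x^l$-coefficient amounts exactly to summing $\prod_j(m_j+1)$ over all $k$-tuples $(m_1,\dots,m_k)$ of nonnegative integers with $\sum m_j = l$.

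Next I would compute $g(x)$ explicitly. From $\sum_{m\ge 0} x^m = \frac{1}{1-x}$, differentiating gives $\sum_{m\ge 1} m x^{m-1} = \frac{1}{(1-x)^2}$, hence $g(x) = \sum_{m\ge 0}(m+1)x^m = \frac{1}{(1-x)^2}$. Therefore $g(x)^k = (1-x)^{-2k}$, and the generalized binomial theorem gives
\[
\Phi_l(k) = [x^l]\,(1-x)^{-2k} = \binom{2k+l-1}{l}.
\]

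Finally, $\binom{2k+l-1}{l} = \frac{1}{l!}\prod_{j=0}^{l-1}(2k+j)$ is manifestly a polynomial in $k$; it has degree exactly $l$, with leading coefficient $\frac{2^l}{l!}\neq 0$. As a sanity check this matches Example \ref{ex:G0}: $\Phi_0(k)=\binom{2k-1}{0}=1$ and $\Phi_1(k)=\binom{2k}{1}=2k$. There is no real obstacle here: the only substantive step is recognizing the sum as a coefficient of a $k$-th power of a series and evaluating the elementary generating function $g(x)$, after which everything is bookkeeping. (If one wished to avoid generating functions entirely, the same closed form can be reached by expanding $\prod_j(1+m_j)=\sum_{S\subseteq\{1,\dots,k\}}\prod_{j\in S}m_j$ and summing over compositions, but the generating-function route is cleanest and makes the degree transparent.)
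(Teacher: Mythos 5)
Your proof is correct, and it takes a genuinely different route from the paper's. The paper proves the lemma by induction on $l$: it derives a recurrence $\Phi_l(k) = \Phi_l(k-1) + \sum_{i_1=2}^{l+1} i_1\,\Phi_{l-i_1+1}(k-1)$, deduces by induction that the discrete derivative $\Phi_l(k)-\Phi_l(k-1)$ is a polynomial of degree $l-1$, and concludes via Faulhaber's formula after noting $\Phi_l(0)=0$. You instead shift indices to rewrite $\Phi_l(k)$ as $[x^l]\,g(x)^k$ with $g(x)=\sum_{m\ge 0}(m+1)x^m=(1-x)^{-2}$, yielding the closed form $\Phi_l(k)=\binom{2k+l-1}{l}$. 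Your argument is shorter and strictly more informative: it produces an explicit formula, hence the polynomiality and the degree $l$ are immediate, and the leading coefficient $\tfrac{2^l}{l!}$ comes for free (the paper's proof does not extract this). One small thing worth saying explicitly, since the paper uses it, is that the formula also covers $k=0$: $\binom{l-1}{l}=0$ for $l\ge 1$ and $\binom{-1}{0}=1$, matching $\Phi_l(0)=F(0,l)$. The paper's inductive route avoids generating functions and is self-contained within the elementary tools it sets up (discrete derivatives, Faulhaber), which is perhaps why the authors chose it, but your derivation is the cleaner one and would have been a legitimate alternative.
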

\begin{proof}
  The proof goes by induction on $l$.
The case $l=0$ is covered by Example \ref{ex:G0}.
Now suppose that $l\ge 1$ and that
the lemma holds up to $l-1$. For $l\ge k$, one has
\begin{align*}
  F(k,l)&=\sum_{i_1=1}^{l-k+1} i_1
  \sum_{\substack{i_2+\cdots+ i_k=l-i_1\\ i_2,\cdots,i_k\ge
      1}}\quad \prod_{j=2}^k i_j
  \\ &= \sum_{i_1=1}^{l-k+1} i_1 \ F(k-1,l-i_1),
\end{align*}
and so
\begin{align*}
  \Phi_l(k)&= F(k,k+l)
  \\ &= \sum_{i_1=1}^{l+1} i_1 \ F(k-1,k+l-i_1)
  \\ &= \sum_{i_1=1}^{l+1} i_1 \ \Phi_{l-i_1+1}(k-1)
  \\ &= \Phi_l(k-1)+ \sum_{i_1=2}^{l+1} i_1 \ \Phi_{l-i_1+1}(k-1).
\end{align*}
By induction on $l$, the function $P_l:k\mapsto \Phi_l(k)-\Phi_l(k-1)$ is
then polynomial of degree $l-1$. Since $\Phi_l(0)=F(0,l)=0$, one has
\begin{align*}
  \Phi_l(k)&=\sum_{i=0}^{k-1} \left( \Phi_l(k-i)-\Phi_l(k-(i+1)) \right)
  \\ &= \sum_{i=1}^{k} P_l(i).
\end{align*}
By Faulhaber's formula, the function $\Phi_l(k)$ is polynomial of degree
$l$, and the proof is complete.
\end{proof}

The next corollaries  constitute key steps in our polynomiality proofs.
Recall that the notation  $\coef[i]{P}$ denotes the
the coefficient of codegree $i$ of a Laurent polynomial $P(q)$.
\begin{cor}\label{cor:key}
  Let $i,k\ge 0$ and $a_1,\cdots,a_k> i$ be integers. Then one has
  \[
  \coef[i]{\prod_{j=1}^{k}[a_j]^2}=\Phi_i(k).
    \]
    In particular, the function
    $(k,a_1,\cdots,a_k)\mapsto \coef[i]{\prod_{j=1}^{k}[a_j]^2}$
    only depends on $k$ on the set
    $\{a_1>i,\cdots,a_k> i\}$,
    and is polynomial of degree $i$.  
\end{cor}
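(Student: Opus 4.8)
The plan is to compute the codegree-$i$ coefficient of $\prod_{j=1}^k [a_j]^2$ directly, using the explicit form of quantum integers. Recall that $[a](q) = q^{(a-1)/2} + q^{(a-3)/2} + \cdots + q^{-(a-1)/2}$, a symmetric Laurent polynomial of degree $(a-1)/2$ (in $q^{1/2}$), all of whose coefficients are $1$. Hence $[a]^2(q)$ has degree $a-1$ and is symmetric, and its top coefficients — the codegree-$j$ coefficients for $0 \le j \le a-1$ — are obtained by the usual convolution: $\coef[j]{[a]^2}$ counts pairs of exponents from the two factors summing to the appropriate value. A direct count gives $\coef[j]{[a]^2} = j+1$ for $0 \le j \le a-1$; this is where the hypothesis $a_j > i$ enters, guaranteeing we stay in the "stable" range where the coefficient is $j+1$ and not yet affected by the polynomial having finite degree.

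Next I would multiply these together. Since the product $\prod_{j=1}^k [a_j]^2$ has degree $\sum (a_j - 1)$, its codegree-$i$ coefficient is $\sum_{i_1 + \cdots + i_k = i} \prod_{j=1}^k \coef[i_j]{[a_j]^2}$, where each $i_j$ ranges over $\ge 0$. Provided each $a_j > i$, every $i_j$ appearing satisfies $i_j \le i < a_j$, so we may substitute $\coef[i_j]{[a_j]^2} = i_j + 1$. Therefore
\[
\coef[i]{\prod_{j=1}^{k}[a_j]^2} = \sum_{\substack{i_1+\cdots+i_k=i\\ i_1,\dots,i_k\ge 0}} \prod_{j=1}^k (i_j+1).
\]
The final step is to recognize the right-hand side as $\Phi_i(k)$. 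Setting $m_j = i_j + 1 \ge 1$ turns the constraint $\sum i_j = i$ into $\sum m_j = i + k$, so the sum becomes $\sum_{\substack{m_1+\cdots+m_k = k+i\\ m_1,\dots,m_k \ge 1}} \prod_{j=1}^k m_j = F(k, k+i) = \Phi_i(k)$, exactly the definition given in the text.

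The "in particular" clauses then follow immediately: the closed form $\Phi_i(k)$ manifestly depends only on $k$ (not on the individual $a_j$) on the region $\{a_1 > i, \dots, a_k > i\}$, and by the preceding lemma $\Phi_i$ is polynomial in $k$ of degree $i$. The only mild subtlety — the main thing to get right rather than a genuine obstacle — is the bookkeeping in the claim $\coef[j]{[a]^2} = j+1$ for $j \le a-1$ and the accompanying verification that the hypothesis $a_j > i$ is exactly what is needed to keep all indices in this stable range when expanding the $k$-fold product; everything else is a routine change of variables.
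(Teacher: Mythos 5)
Your proof is correct and follows exactly the paper's own argument: convolve the codegree coefficients, use that $\coef[j]{[a]^2}=j+1$ in the stable range $j\le i<a$, and reindex $m_j=i_j+1$ to match the definition of $\Phi_i(k)$. No gaps and no meaningful deviation from the paper's proof.
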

\begin{proof}
  Since $\coef[i]{[a]^2}=i+1$ if $a>i$, one has
  \begin{align*}
    \coef[i]{\prod_{j=1}^{k}[a_j]^2}&=
    \sum_{\underset{i_1,\dots,i_k\geq 0}{i_1+i_2+\dots+i_k=i}}
\quad \prod_{j=1}^{k}\coef[i_j]{ [a_j]^2}
\\ &=\sum_{\underset{i_1,\dots,i_k\geq 0}{i_1+i_2+\dots+i_k=i}}
\quad \prod_{j=1}^{k}(i_j+1)
 \\ &=\sum_{\underset{i_1,\dots,i_k\geq 1}{i_1+i_2+\dots+i_k=i+k}}
 \quad \prod_{j=1}^{k}i_j
 \\ &= \Phi_i(k),
  \end{align*}
  as announced.
\end{proof}

\begin{cor}\label{cor:key2}
  Let $P(q)$ be a Laurent polynomial, and 
  $i\ge 0$ an integer. Then the function
    $(k,a_1,\cdots,a_k)\mapsto \coef[i]{P(q)\times \prod_{j=1}^{k}[a_j]^2}$
    only depends on $k$ on the set
    $\{a_1>i,\cdots,a_k> i\}$, and is polynomial of degree $i$.  
\end{cor}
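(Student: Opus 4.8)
The plan is to reduce the statement to Corollary \ref{cor:key} by expanding the codegree-$i$ coefficient of the product $P(q)\cdot\prod_{j=1}^k[a_j]^2$ according to how the top-degree terms split between the factor $P(q)$ and the factor $\prod_j[a_j]^2$. Write $d_P=\deg P$ and recall that $\prod_{j=1}^k[a_j]^2$ has degree $D_k=\sum_j(a_j-1)$, so that $P(q)\cdot\prod_j[a_j]^2$ has degree $d_P+D_k$. First I would record the elementary convolution identity for codegree coefficients of a product: for any two Laurent polynomials $f,g$ one has $\coef[i]{fg}=\sum_{i_1+i_2=i}\coef[i_1]{f}\coef[i_2]{g}$, where the sum runs over $i_1,i_2\ge 0$ (this is just the statement that the coefficient of $q^{(d_f+d_g)-i}$ in $fg$ is the convolution of the relevant coefficients of $f$ and $g$, and terms with $i_1<0$ or $i_2<0$ simply do not occur).

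Applying this with $f=P$ and $g=\prod_{j=1}^k[a_j]^2$ gives
\[
\coef[i]{P(q)\cdot\prod_{j=1}^{k}[a_j]^2}
=\sum_{l=0}^{i}\coef[l]{P}\cdot\coef[i-l]{\prod_{j=1}^{k}[a_j]^2}.
\]
Now I invoke Corollary \ref{cor:key}: since each $a_j>i\ge i-l$, the inner factor equals $\Phi_{i-l}(k)$, which depends only on $k$ and, by the preceding lemma, is polynomial in $k$ of degree $i-l$. The coefficients $\coef[l]{P}$ are fixed constants (independent of $k$ and of the $a_j$). Hence the whole expression equals $\sum_{l=0}^{i}\coef[l]{P}\,\Phi_{i-l}(k)$, which is manifestly independent of $a_1,\dots,a_k$ on the set $\{a_1>i,\dots,a_k>i\}$ and is a polynomial in $k$.

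It remains to check the degree claim, namely that this polynomial in $k$ has degree exactly $i$. Each summand $\coef[l]{P}\,\Phi_{i-l}(k)$ has degree $i-l\le i$ in $k$, so the total degree is at most $i$; the term $l=0$ contributes $\coef[0]{P}\,\Phi_i(k)$, and since $P$ is a nonzero Laurent polynomial its leading coefficient $\coef[0]{P}$ is nonzero, while $\Phi_i(k)$ has degree exactly $i$ with positive leading coefficient, so no cancellation with the strictly-lower-degree terms can kill the degree-$i$ part. This gives degree exactly $i$. The only mild subtlety — the point I would be most careful about — is the convention that $P$ is assumed nonzero (otherwise $\coef[0]{P}$ is undefined and the degree statement is vacuous or false); under that standing assumption the argument is complete, and the main "obstacle" is really just bookkeeping the degree-of-$P$ shift correctly so that the convolution formula for codegrees is applied with the right ranges of indices.
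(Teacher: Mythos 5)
Your proof is correct and takes essentially the same route as the paper: expand the codegree-$i$ coefficient of the product via the convolution identity
\[
\coef[i]{P(q)\times \prod_{j=1}^{k}[a_j]^2}=\sum_{\underset{i_1,i_2\ge0}{i_1+i_2=i}}\coef[i_1]{P}\cdot\coef[i_2]{\prod_{j=1}^{k}[a_j]^2},
\]
then invoke Corollary~\ref{cor:key}. The paper's proof stops after that invocation; your extra paragraph verifying the exact degree (the $l=0$ term $\coef[0]{P}\Phi_i(k)$ survives because $\coef[0]{P}\ne 0$ and $\Phi_i$ has positive leading coefficient) and flagging the implicit assumption $P\ne 0$ is a welcome addition, not a deviation.
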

\begin{proof}
  One has
  \[
  \coef[i]{P(q)\times \prod_{j=1}^{k}[a_j]^2}=
  \sum_{\underset{i_1,i_2\geq 0}{i_1+i_2=i}} \coef[i_1]{P(q)} \times\coef[i_2]{\prod_{j=1}^{k}[a_j]^2}.
\quad
  \]
 The statement now follows from Corollary \ref{cor:key}. 
\end{proof}

\section{The genus 0 case}\label{sec:g=0}

\subsection{Proof of Theorem \ref{thm:main1}}\label{sec:g=0 gen}
The main step  is Lemma \ref{lem:d rat} below. 
It can be summarized as follows:
for $(a,b,n,s)$ satisfying the condition from 
Theorem \ref{thm:main1}, all floor diagrams of codegree at most $i$
can easily be described. Then Theorem \ref{thm:main1} simply follows from an
explicit computation of the multiplicity and the number of markings of
each such floor diagram.

Given $i\in\Z_{\ge 0}$, and
$(u,\widetilde u)\in \Z_{\ge 0}^i\times\Z_{\ge0}^i$, we define
  \[
  \codeg(u,\widetilde u)=\sum_{j=1}^{i}j\ (u_j +\widetilde u_j),
  \]
  and we consider the finite set
  \[
  C_i=\left\{(u,\widetilde u)\in \Z_{\ge 0}^i\times\Z_{\ge 0}^i\ | \
  \codeg(u,\widetilde u)\le i   \right\}.
  \]

  For $(u,\widetilde u)\in C_i$, and integers   $b,n\ge 0$, and $a>i$, we denote by
    $\D_{a,b,n,u,\widetilde u}$ the floor diagram of genus 0 and
  Newton polygon $\Delta_{a,b,n}$  depicted in Figure
  \ref{fig:d rat} (we do not specify the weight on elevators in
  $E^0(\D_{a,b,n,u,\widetilde u})$ there since they can be recovered from
  $a,b,n,u$, and $\widetilde u$).
\begin{figure}[h]
\begin{center}
  \begin{tabular}{c}
    \includegraphics[height=12cm, angle=0]{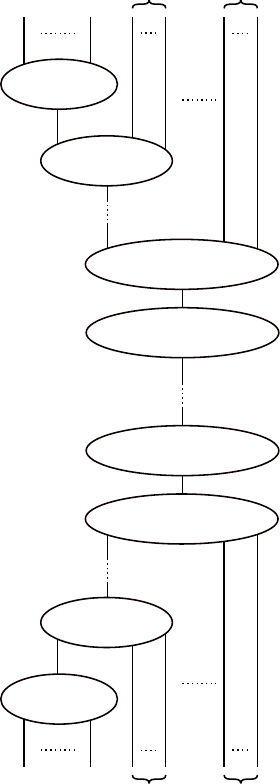}
     \put(-100,35){$v_1$}
   \put(-80,68){$v_2$}
     \put(-50,113){$v_{i+1}$}
     \put(-100,302){$v_a$}
   \put(-83,270){$v_{a-1}$}
     \put(-55,225){$v_{a-i}$}
 \put(-62,-10){$u_1$}
  \put(-20,-10){$u_{i}$}
   \put(-62,348){$\widetilde u_1$}
   \put(-20,348){$\widetilde u_{i}$}
   \end{tabular}
\end{center}
\caption{The floor diagram $\D_{a,b,n,u,\widetilde u}$}
\label{fig:d rat}
\end{figure}
In particular the partial ordering $\o$ on $\D_{a,b,n,u,\widetilde u}$ induces a total
  ordering on its floors
  \[
  v_1\prec \cdots\prec v_a.
  \]
 Note that $\widetilde u_k=0$ (resp. $u_k= 0$) for $k>i-j$ as soon as
  $u_j\ne 0$ (resp. $\widetilde u_j\ne 0$).

  \begin{lemma}\label{lem:d rat}
  Let $i,n\in\Z_{\ge 0}$, and let
  $\D$ be a floor diagram of genus 0 with Newton polygon
  $\Delta_{a,b,n}$ with $a,b,$ and $i$ satisfying 
\[
\left\{\begin{array}{l}
b> i
\\ a>i
\end{array}
\right. .
\]
Then one has
\[
\codeg(\D)\le i \Longleftrightarrow \exists (u,\widetilde u)\in C_i,\ \D=\D_{a,b,n,u,\widetilde u}.
\]
Furthermore in this case, any elevator $e\in E^0(\D)$ satisfies
$\omega(e)>i-\codeg(\D)$.
\end{lemma}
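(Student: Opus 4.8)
The plan is to prove the two implications separately; the substance lies in $(\Rightarrow)$, and the estimate that makes the construction in $(\Leftarrow)$ legitimate is the very one that yields the last assertion of the lemma.

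First I would record a computation valid along any chain of floors. For $\Delta=\Delta_{a,b,n}$ one has $d_l\Delta=\{0,\dots,0\}$ and $d_r\Delta=\{n,\dots,n\}$, so every floor of a floor diagram with Newton polygon $\Delta$ has divergence $n$. If $v_1\prec\cdots\prec v_a$ are joined by single elevators $e_1,\dots,e_{a-1}$, with $u_k$ sources attached to $v_{k+1}$ and $\widetilde u_k$ sinks attached to $v_{a-k}$, the divergence equations force
\[
\omega(e_k)=(a-k)n+b-\sum_{j\ge k}u_j-\sum_{j\ge a-k}\widetilde u_j .
\]
Since $\codeg(u,\widetilde u)=\sum_k k(u_k+\widetilde u_k)\ge k\sum_{j\ge k}u_j+(a-k)\sum_{j\ge a-k}\widetilde u_j$, one gets $\omega(e_k)+\codeg(u,\widetilde u)\ge(a-k)n+b\ge b>i$, hence $\omega(e_k)>0$ and $\omega(e_k)>i-\codeg(u,\widetilde u)$; this will give both the well-definedness of $\D_{a,b,n,u,\widetilde u}$ and the last assertion of the lemma. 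Computing $\deg$ from the display and subtracting from $\iota_{\Delta_{a,b,n}}$ (equivalently, starting from the codegree $0$ diagram and tracking operations $A^\pm$ via Lemma~\ref{lem:codegree}) telescopes to $\codeg(\D_{a,b,n,u,\widetilde u})=\sum_k k(u_k+\widetilde u_k)=\codeg(u,\widetilde u)$, which settles $(\Leftarrow)$ since $(u,\widetilde u)\in C_i$ means $\codeg(u,\widetilde u)\le i$.

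For $(\Rightarrow)$, I would assume $\codeg(\D)\le i$. Being of genus $0$, $\D$ is a tree, and deleting its sources and sinks (pendant edges) shows that the $a$ floors and the $a-1$ elevators of $E^0(\D)$ form an oriented subtree $T$, in which a floor is minimal (resp.\ maximal) for $\preccurlyeq$ exactly when it is a source (resp.\ sink) of $T$. By Lemma~\ref{lem:min}, since $\Card(E^{-\infty}(\D))=d_b\Delta_{a,b,n}=an+b$ and the divergence is constant equal to $n$, having $k\ge2$ minimal floors would give, using $k\le a$,
\[
\codeg(\D)\ \ge\ (k-1)\Bigl(an+b-\tfrac{nk}{2}\Bigr)\ \ge\ (k-1)\,b\ \ge\ b\ >\ i ,
\]
a contradiction; so $\D$ has a unique minimal floor. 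Reversing all orientations of $\D$ (and exchanging sources with sinks, $l$ with $r$) yields a floor diagram with constant divergence $-n$, with exactly $b$ sources, and with Newton polygon a lattice reflection of $\Delta_{a,b,n}$, hence of unchanged codegree; applying Lemma~\ref{lem:min} to it shows $\D$ has a unique maximal floor as well. As a finite oriented tree with a unique source and a unique sink is a directed path, $T$ is the chain $v_1\to\cdots\to v_a$, the floors of $\D$ are totally ordered, and we are in the setting considered above.

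It then remains to read off the combinatorics: writing $s_k,t_k$ for the numbers of sources and sinks at $v_k$, the divergence equations give $\omega(e_k)=\sum_{j\le k}(s_j-t_j)-kn$, and substituting this into $\codeg(\D)=\iota_{\Delta_{a,b,n}}-\deg(\D)$ and telescoping gives $\codeg(\D)=\sum_{k\ge1}k\,s_{k+1}+\sum_{k\ge1}k\,t_{a-k}$. Setting $u_k:=s_{k+1}$, $\widetilde u_k:=t_{a-k}$, the bound $\codeg(\D)\le i$ forces $u_k=\widetilde u_k=0$ for $k>i$ (here $a>i$ ensures these exhaust the relevant floors), so $(u,\widetilde u)\in C_i$, $\D=\D_{a,b,n,u,\widetilde u}$, and the inequality above gives $\omega(e)>i-\codeg(\D)$ for every $e\in E^0(\D)$. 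The hard part is the reduction to a chain: it must combine the tree structure of genus $0$ floor diagrams with \emph{both} uniqueness statements, the maximal one resting on the slightly delicate point that orientation-reversal is again a (reflected) floor diagram, of unchanged codegree, to which Lemma~\ref{lem:min} may be applied. Everything after that is routine telescoping, to be matched carefully against the diagram $\D_{a,b,n,u,\widetilde u}$ of Figure~\ref{fig:d rat}.
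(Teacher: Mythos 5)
Your proof is correct and follows essentially the same route as the paper's: both establish layeredness by applying Lemma~\ref{lem:min} to $\Delta_{a,b,n}$ and to $-\Delta_{a,b,n}$ (your orientation-reversal is exactly the paper's ``applied to the polygon $-\Delta_{a,b,n}$''), then read off $(u,\widetilde u)$ from the chain. The only real difference is cosmetic: your uniform bound $\omega(e_k)+\codeg(u,\widetilde u)\ge (a-k)n+b$ gives the final assertion for every $e_k$ at once, whereas the paper's argument locates the minimal-weight elevator before bounding it.
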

  \begin{proof}
 Given $(u,\widetilde u)\in C_i$,  one has
$\codeg(\D_{a,b,n,u,\widetilde u})=\sum_{j=1}^{i}j\ (u_j +\widetilde u_j)$
 by  a
finite succession of operations $A^\pm$  and applications 
of Lemma
\ref{lem:codegree}.

 Let $\D$ be of codegree at most $i$, and   
 suppose that the order $\o$ is not total on the set of floors of~$\D$. Since~$\D$ 
 is a tree, this is equivalent to say that there
 exist at least two minimal or two maximal floors for $\o$. Denote by
 $k_t$ and $k_b$ the number of  maximal and minimal floors of $\D$,
 respectively.

 By Lemma \ref{lem:min} applied to the orthogonal symmetric of the
 polygon $\Delta_{a,b,n}$ with respect to the $x$-axis, 
 one has
 \[
 \codeg(\D)\ge (k_t-1)\ \left(b +n\frac{k_t}{2}\right).
 \]
 Hence $k_t\ge 2$ implies that
  \[
 \codeg(\D)\ge  b +n >i,
 \]
 contrary to our assumption.

 Analogously, by Lemma \ref{lem:min}, one has that  
 \[
 \codeg(\D)\ge (k_b-1)\ \left( \left(a  -\frac{k_b}{2}\right)n +b\right).
 \]
 Since $k_b\le a-1$, one deduces that $a  -\frac{k_b}{2}\ge 1$.
 Hence $k_b\ge 2$ implies that
  \[
 \codeg(\D)\ge  b +n >i,
 \]
 contrary to our assumption.
 Hence we proved that the order $\o$ is total on the set of floors of $\D$.

 Denoting by $u_j$ (resp. $\widetilde u_j$) the number of elevators in
 $E^{-\infty}(\D)$ (resp. $E^{+\infty}(\D)$) adjacent to the floor
 $v_{j+1}$ (resp. $v_{a-j}$), we then have 
$\D=\D_{a,b,n,u,\widetilde u}$.
Since
 \[
\codeg(\D)= \sum_{j=1}^{a-1} j(u_j+\widetilde u_j),
\]
we deduce that $(u,\widetilde u)\in C_i$.

 To end the proof of the lemma, just note that the elevator in
 $E^0(\D)$ with the lowest weight is either one of the elevators adjacent to 
 the floors $v_k$ and $v_{k+1}$, with $1\le k\le i$,  or the highest
 one for $\o$. The former has weight at least
 \[
 (a-k)n+b-\sum_{j=k}^iu_j \ge b-\codeg(\D)>i-\codeg(\D),
 \]
 while the latter has weight at least
$n+b-\sum_{j=1}^i\widetilde u_j>i-\codeg(\D)$.
\end{proof}

  Let us now count the number of markings of the floor diagram
  $\D_{a,b,n,u,\widetilde u}$. 
 Given $(u,\widetilde u)\in C_i$, we define the functions
  \[
  \widetilde \nu_{u}(a,b,n,s)=
  \sum_{s_0+s_1+\cdots+s_i=s}
\frac{s!}{s_0! s_1! \cdots s_i!}
  \prod_{j=1}^i
       {{an+b+2j-2s_0-2s_1-\cdots - 2s_j-u_{j+1}-\cdots-u_i}\choose{u_j-2s_j}}
       \]
       and
       \[
       \nu_{u,\widetilde u}(a,b,n,s)=
       \widetilde \nu_{u}(a,b,n,s)\times   
        \widetilde \nu_{\widetilde  u}(0,b,0,0). 
      \]
       
\begin{lemma}\label{lem:marking g=0}
  If   $(u,\widetilde u)\in C_i$ and $(a,b,n,s)$ is an element of the
  subset of $\Z_{\ge 0}^4$ defined by
  \[
  \left\{
  \begin{array}{l}
    b\ge i
    \\ an+b\ge i+2s
  \end{array}
  \right.,
  \]
then $\nu_{u,\widetilde u}(a,b,n,s)$ is the
  number of markings of the floor diagram $\D_{a,b,n,u,\widetilde u}$ 
  that are compatible with the pairing
   $\{\{1,2\},\{3,4\},\cdots,\{2s-1,2s\}\}$.
Furthermore the function
$(a,b,n,s)\mapsto \nu_{u,\widetilde u}(a,b,n,s)$ is polynomial
on
this set, and has degree at  most $ \sum_{j=1}^i(u_j+\widetilde u_j)$ in each variable.
If $(u,\widetilde u)=((i,0,\cdots,0),0)$, then the degree in each variable is
exactly $i$.
\end{lemma}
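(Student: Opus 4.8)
The plan is to count the markings directly as linear extensions of the partial order on $V(\D)\cup E(\D)$ for $\D=\D_{a,b,n,u,\widetilde u}$, remembering that $\Aut(\D)$ (which permutes the parallel monovalent edges in each family) acts freely on markings, so that isomorphism classes correspond to orbits. Write $M_0=an+b-\sum_j u_j$ and $\widetilde M_0=b-\sum_j\widetilde u_j$ for the numbers of ``bulk'' sources at $v_1$ and sinks at $v_a$ (recall $\D$ has one elevator $\epsilon_k$ between $v_k$ and $v_{k+1}$, $u_j$ sources at $v_{j+1}$, and $\widetilde u_j$ sinks at $v_{a-j}$). The first step is to pin down the pairs: since $\sum_j u_j\le\codeg(u,\widetilde u)\le i$, the hypothesis $an+b\ge i+2s$ gives $M_0\ge 2s$; as every floor and every elevator of $\D$ has at least $M_0$ elements below it, a short induction on $t$ shows that in any marking $m$ compatible with $S=\{\{1,2\},\dots,\{2s-1,2s\}\}$ the elements $m(2t-1),m(2t)$ must be two distinct sources adjacent to a common floor (the positions $1,\dots,2t-2$ carry only $2(t-1)<M_0$ sources, which forces $m(2t-1)$ to be minimal and then rules out $m(2t)$ being a floor). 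In particular all $s$ pairs of $S$ live in the ``bottom'' of $\D$.

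Next I would split the poset as an ordinal sum $B\oplus C\oplus T$: the floors $v_2,\dots,v_{i+1}$ carry the extra sources, the floors $v_{a-i},\dots,v_{a-1}$ the extra sinks, and (for $a$ large) these two packets are separated by a chain $C$ of forced floors and elevators. Then both $\Aut(\D)$ and the set of linear extensions factor over $B$ and $T$, and since the paired positions all sit in $B$, the number of isomorphism classes of compatible markings equals $\beta\cdot\tau$, where $\beta$ counts isomorphism classes of linear extensions of $B$ compatible with $S$ and $\tau$ those of $T$ (no constraint). For $\tau$ I would run the bottom computation on $\D$ turned upside down — sinks become sources, no pairs occur, and the relevant total source number is now $b$ — which should give $\tau=\widetilde\nu_{\widetilde u}(0,b,0,0)$. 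For $\beta$, I would first treat $s=0$: peeling off the unique maximal floor of $B$ and then the unique maximal elevator reduces $B$ to the same poset with $i$ lowered by one, together with an antichain of $u_i$ free sources, and the resulting recursion yields $\beta=\prod_{j=1}^i\binom{M_0+u_1+\dots+u_j+2j}{u_j}=\widetilde\nu_u(a,b,n,0)$ (using $an+b=M_0+\sum_j u_j$). For $s\ge 1$, a compatible marking is the datum of which floor each pair sits at — a composition $s=s_0+\dots+s_i$ with $2s_j\le u_j$ for $j\ge 1$, weighted by $\binom{s}{s_0,\dots,s_i}$ — followed by the placement of the remaining unpaired sources, counted by the $s=0$ formula with $M_0,u_j$ replaced by $M_0-2s_0,u_j-2s_j$; the identity $(M_0-2s_0)+\sum_{k\le j}(u_k-2s_k)+2j=an+b+2j-2(s_0+\dots+s_j)-(u_{j+1}+\dots+u_i)$ then turns the sum over $(s_0,\dots,s_i)$ into exactly $\widetilde\nu_u(a,b,n,s)$, so $\beta\tau=\nu_{u,\widetilde u}(a,b,n,s)$.

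For the polynomiality and degree assertions I would observe that on $\{b> i,\ an+b\ge i+2s\}$ all binomial arguments occurring above are at least their lower indices (using $M_0\ge 2s$ and $b\ge i\ge\sum_j\widetilde u_j$), so each combinatorial binomial coincides with the polynomial in its upper entry and $\nu_{u,\widetilde u}$ is a genuine polynomial there (the terms with some $2s_j>u_j$ contribute $0$ both combinatorially and, with $\binom Nm=0$ for $m<0$, as polynomials). Each $N_j=an+b+2j-2(s_0+\dots+s_j)-(u_{j+1}+\dots+u_i)$ is affine in $a$, $b$, $n$ and in $s$, so $\binom{N_j}{u_j-2s_j}$ has degree $u_j-2s_j$ in each; multiplying over $j$, incorporating the $b$-degree $\sum_j\widetilde u_j$ of $\widetilde\nu_{\widetilde u}(0,b,0,0)$ and the $s$-degree $\sum_{j\ge 1}s_j$ of $\binom{s}{s_0,\dots,s_i}$, one gets degree at most $\sum_{j=1}^i(u_j+\widetilde u_j)$ in each of $a,b,n,s$. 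When $(u,\widetilde u)=((i),0)$ only the index $j=1$ and only the summand with $s_1=\dots=s_i=0$ (hence $s_0=s$) reach degree $i$, with uncancelled leading monomials $\tfrac1{i!}(an)^i$ in $a$ and in $n$, $\tfrac1{i!}b^i$ in $b$, and $\tfrac{(-2)^i}{i!}s^i$ in $s$; so in that case the degree is exactly $i$ in each variable.

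I expect the main obstacle to be the bookkeeping in the evaluation of $\beta$ — making the recursion for the linear extensions of $B$ land precisely on the displayed product of binomials with the correct ``$+2j$'' shifts — together with the boundary cases where $a$ is so small that the source‑carrying and sink‑carrying floors of $\D$ overlap; there one must argue directly, using that $\codeg(u,\widetilde u)\le i$ forces $u_j\widetilde u_k=0$ whenever $j+k>i$, that the count still factors in the required way.
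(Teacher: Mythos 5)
Your argument follows essentially the same route as the paper's (very terse) proof: both rely on the observations that the codegree constraint forces $u_j\widetilde u_k=0$ for $j+k>i$ and that $m(j)\in E^{-\infty}(\D)$ for $j\le 2s$, and then count linear extensions using the bottom/top factorization built into $\nu_{u,\widetilde u}=\widetilde\nu_u\cdot\widetilde\nu_{\widetilde u}$; you simply make explicit what the paper calls "straightforward." Two small remarks: your sentence "every floor and every elevator of $\D$ has at least $M_0$ elements below it" should read "every element that is not a source," since the $u_j$ sources at $v_{j+1}$ are elevators with no predecessors; and the deferred boundary case with small $a$ is in fact handled by the very observation you point to, since $J+K\le i\le a-1$ (with $J=\max\{j:u_j\ne 0\}$, $K=\max\{j:\widetilde u_j\ne 0\}$) guarantees a pinch floor $v_k$ with $J+1\le k\le a-K$, so the poset splits as an ordinal sum at $v_k$ for every $a>i$ and the count factors as you claim.
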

\begin{proof}
 Recall that  $u_j\ne 0$ (resp. $\widetilde u_j\ne 0$) implies that
 $\widetilde u_k=0$ (resp. $u_k= 0$) for $k>i-j$.
 Next,
 if  $an+b\ge i+2s$, then any marking $m$ of
 $\D_{a,b,n,u,\widetilde u}$ satisfies
 $m(j)\in E^{-\infty}(\D_{a,b,n,u,\widetilde u})$
  if $j\le 2s$. 
  From these two observations, it is straightforward to
  compute  the
  number of markings of $\D_{a,b,n,u,\widetilde u}$ 
  compatible with $\{\{1,2\},\cdots,\{2s-1,2s\}\}$. This proves the first assertion
  of the lemma.

  To prove the second assertion, notice that
  the number of possible values of $s_1,\cdots,s_i$ giving rise to a
  non-zero summand of $\widetilde \nu_{u}(a,b,n,s)$
  is finite and only depends on the vector $u$. Hence this assertion
  follows from the fact that,
  for such a fixed choice of $s_1,\cdots,s_i$,
  the function
  \[
  (a,b,n,s)\longmapsto
  \frac{s!}{s_0! s_1! \cdots s_i!}
  \prod_{j=1}^i
       {{an+b+2j-2s_0-2s_1-\cdots - 2s_j-u_{j+1}-\cdots-u_i}\choose{u_j-2s_j}}
  \]
  is polynomial as soon as $an+b\ge i+2s$,   of degree at most
  $ \sum_{j=1}^i(u_j-2s_j)$ in the variables $a,b,$ and $n$, and of degree
at most $ \sum_{j=1}^i(u_j-s_j)$ in the variable $s$.
The third assertion also follows from this computation.
\end{proof}
  
\begin{thm}\label{thm:main1 expl g0}
  For any $i\in\Z_{\ge 0}$, and any $(a,b,n,s)$ in the set
  $\mathcal U_{i}\subset \Z_{\ge 0}^4$ 
 defined by
\[
\left\{\begin{array}{l}
an+b\ge i+2s
\\ b>i
\\ a>i
\end{array}
\right. ,
\]
one has 
\[
\coef[i]{G_{\Delta_{a,b,n}}(0;s)}= \sum_{(u,\widetilde u)\in C_i}
        \nu_{u,\widetilde u}(a,b,n,s)
       \times   \Phi_{i-\codeg(u,\widetilde u)}(a-1).
\]
In particular, 
  the function
\[
\begin{array}{ccc}
\mathcal U_{i}&\longrightarrow & \Z_{\ge 0}
\\ (a,b,n,s)& \longmapsto & \coef[i]{G_{\Delta_{a,b,n}}(0;s)}
\end{array}
\]
is polynomial 
of degree $i$ in each variable.
\end{thm}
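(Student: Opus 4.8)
The plan is to combine the combinatorial description of floor diagrams of small codegree (Lemma \ref{lem:d rat}), the marking count (Lemma \ref{lem:marking g=0}), and the codegree bookkeeping from Corollary \ref{cor:key} into the stated formula, and then read off polynomiality from the fact that each ingredient is itself polynomial. First I would invoke Theorem \ref{thm:psi fd}: fix the standard pairing $S=\{\{1,2\},\{3,4\},\dots,\{2s-1,2s\}\}$ of order $s$ and write
\[
G_{\Delta_{a,b,n}}(0;s)(q)=\sum_{(\D,m)}\mu_S(\D,m)(q),
\]
the sum over isomorphism classes of marked floor diagrams of genus $0$ with Newton polygon $\Delta_{a,b,n}$ compatible with $S$. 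Since $\codeg(\D)\ge 0$ always and $\mu_S(\D,m)$ has degree $\deg(\D)=\iota_{\Delta_{a,b,n}}-\codeg(\D)$, taking the coefficient of codegree $i$ kills every summand with $\codeg(\D)>i$. On $\mathcal U_i$ we have $b>i$ and $a>i$, so Lemma \ref{lem:d rat} applies and tells us that the surviving diagrams are exactly the $\D_{a,b,n,u,\widetilde u}$ for $(u,\widetilde u)\in C_i$, with $\codeg(\D_{a,b,n,u,\widetilde u})=\codeg(u,\widetilde u)$.

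Next I would organize the sum by $(u,\widetilde u)$: for each such diagram the markings compatible with $S$ are counted by $\nu_{u,\widetilde u}(a,b,n,s)$ (this needs $b\ge i$ and $an+b\ge i+2s$, both guaranteed on $\mathcal U_i$), and for a fixed marking the contribution to the codegree-$i$ coefficient is $\coef[i]{\mu_S(\D_{a,b,n,u,\widetilde u},m)}$. Now comes the key point: by the last sentence of Lemma \ref{lem:d rat}, every elevator $e\in E^0(\D_{a,b,n,u,\widetilde u})$ has weight $\omega(e)>i-\codeg(u,\widetilde u)$, so in the product expression for $\mu_S$ the quantum factors coming from these $a-1$ internal elevators each have ``enough room'', while the factors attached to the $2s$ paired sources (which lie in $E_1$ and contribute $[\omega(e)](q^2)=[1](q^2)=1$ since sources have weight $1$) and to unpaired sources/sinks (weight $1$, contributing $1$) are trivial. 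Hence $\mu_S(\D_{a,b,n,u,\widetilde u},m)=\prod_{j=1}^{a-1}[w_j]^2$ for weights $w_j>i-\codeg(u,\widetilde u)$, and Corollary \ref{cor:key} gives $\coef[i-\codeg(u,\widetilde u)]{\prod_{j=1}^{a-1}[w_j]^2}=\Phi_{i-\codeg(u,\widetilde u)}(a-1)$, independent of the marking. Since $\mu_S$ is symmetric of degree $\deg(\D)=\iota-\codeg(u,\widetilde u)$, its codegree-$i$ coefficient (relative to degree $\iota$) equals its codegree-$(i-\codeg(u,\widetilde u))$ coefficient relative to its own degree, which is exactly $\Phi_{i-\codeg(u,\widetilde u)}(a-1)$. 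Summing over markings and over $(u,\widetilde u)\in C_i$ yields
\[
\coef[i]{G_{\Delta_{a,b,n}}(0;s)}=\sum_{(u,\widetilde u)\in C_i}\nu_{u,\widetilde u}(a,b,n,s)\times\Phi_{i-\codeg(u,\widetilde u)}(a-1).
\]

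For the polynomiality claim I would note that $C_i$ is a fixed finite set, $\Phi_{i-\codeg(u,\widetilde u)}$ is polynomial (of degree $i-\codeg(u,\widetilde u)$) in $a-1$ by the lemma preceding Corollary \ref{cor:key}, and $\nu_{u,\widetilde u}$ is polynomial on $\mathcal U_i$ by Lemma \ref{lem:marking g=0}; a sum of products of polynomials is polynomial. For the degree in each variable: in $a$, the summand indexed by $(u,\widetilde u)$ has degree at most $\big(\sum_j(u_j+\widetilde u_j)\big)+\big(i-\codeg(u,\widetilde u)\big)$, and since $\codeg(u,\widetilde u)=\sum_j j(u_j+\widetilde u_j)\ge\sum_j(u_j+\widetilde u_j)$ this is $\le i$, with equality forced only by the term $(u,\widetilde u)=((i),\mathbf 0)$ (the ``$\widetilde\nu$ vs.\ $\nu$'' discussion and the third assertion of Lemma \ref{lem:marking g=0} handle the exact-degree-$i$ claim for $a$, and similarly for $b$, $n$); in $b$, $n$, $s$ the factor $\Phi$ is constant, so the degree is that of $\nu_{u,\widetilde u}$, again $\le i$ with the leading term supplied by $(u,\widetilde u)=((i),\mathbf 0)$. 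The main obstacle, and the part that needs the most care, is verifying cleanly that the multiplicity $\mu_S(\D_{a,b,n,u,\widetilde u},m)$ really does reduce to a product of $[w_j]^2$ over the internal elevators with no contribution from the pairing $S$ — i.e. checking that every pair in $S$ is realized by $\{$source, adjacent floor$\}$ or $\{$two sources at the minimal floor$\}$ so that all $E_1$- and $E_2$-factors are $1$ — which rests on the facts that sources have weight $1$ and that, on $\mathcal U_i$, the marking must send $\{1,\dots,2s\}$ into $E^{-\infty}$ (as used in Lemma \ref{lem:marking g=0}). Once that is pinned down, everything else is assembling already-proven pieces.
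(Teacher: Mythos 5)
Your proposal is correct and follows essentially the same route as the paper: kill high-codegree terms, apply Lemma \ref{lem:d rat} to identify the surviving diagrams as $\D_{a,b,n,u,\widetilde u}$, count markings via Lemma \ref{lem:marking g=0}, reduce $\mu_S$ to $\mu$ using the fact that all paired elements land on weight-$1$ sources so the $E_1$/$E_2$-factors are trivial, and then apply Corollary \ref{cor:key}. One minor slip: pairs $\{2j-1,2j\}$ mapped to two sources sharing a floor lie in $E_2$, not $E_1$ (contributing $\frac{[1][1][2]}{[2]}=1$ rather than $[1](q^2)=1$), but you correct this in your closing paragraph and the conclusion $\mu_S(\D,m)=\mu(\D)$ is unaffected.
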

\begin{proof}
Let $(a,b,n,s)\in \mathcal U_{i}$. Since  $an+b\ge i+2s$, any marking $m$ of
 $\D_{a,b,n,u,\widetilde u}$ satisfies
 $m(j)\in E^{-\infty}(\D_{a,b,n,u,\widetilde u})$
if $j\le 2s$. In particular, one has
\[
\mu_{\{\{1,2\},\cdots,\{2s-1,2s\}\}}(\D_{a,b,n,u,\widetilde u},m)=\mu(\D_{a,b,n,u,\widetilde u})
\]
for any marking $m$ of $\D_{a,b,n,u,\widetilde u}$ compatible with
the pairing $\{\{1,2\},\{3,4\},\cdots,\{2s-1,2s\}\}$.
Lemma \ref{lem:d rat} and Corollary \ref{cor:key} give
  \[
  \coef[i-\codeg(u,\widetilde u)]{\mu(\D_{a,b,n,u,\widetilde u})}=
  \Phi_{i-\codeg(u,\widetilde u)}(a-1).
  \]
  By Lemma \ref{lem:marking g=0},  one has then
  \begin{align*}
    \coef[i]{G_{\Delta_{a,b,n}}(0;s)}&=
    \coef[i]{ \sum_{(u,\widetilde u)\in C_i}
      \nu_{u,\widetilde u}(a,b,n,s)
      \times \mu(\D_{a,b,n,u,\widetilde u}) }
    \\ &=
     \sum_{(u,\widetilde u)\in C_i}
       \nu_{u,\widetilde u}(a,b,n,s)
       \times \coef[i-\codeg(u,\widetilde u)]{
              \mu(\D_{a,b,n,u,\widetilde u})} 
       \\ &= \sum_{(u,\widetilde u)\in C_i}
        \nu_{u,\widetilde u}(a,b,n,s)
       \times   \Phi_{i-\codeg(u,\widetilde u)}(a-1).
  \end{align*}
  Hence Corollary \ref{cor:key} and Lemma \ref{lem:marking g=0}
  imply that the function
  $(a,b,n,s)\in\mathcal U_{i} \mapsto\coef[i]{G_{\Delta_{a,b,n}}(0;s)}$ is
  polynomial. Furthermore, its degree in $b,n$ and $s$ is $i$, since it
  is the maximal degree of a function $\nu_{u,\widetilde u}$.
  The degree in the variable $a$ of
$\nu_{u,\widetilde u}(a,b,n,s)\times   \Phi_{i-\codeg(u,\widetilde u)}(a-1)$
  is at most
 \[
 i-\codeg(u,\widetilde u)+\sum_{j=1}^i(u_j+\widetilde u_j)
 = i-\sum_{j=2}^{i}(j-1)\ (u_j +\widetilde u_j).
 \]
 Hence this degree is at most $i$, with equality if $u=\widetilde u=0$.
\end{proof}

\subsection{$b=0$ and $n$ fixed}\label{sec:g=b=0}
Here we explain how to modify the proof of Theorem \ref{thm:main1 expl g0} in
the case when one wants to fix $b=0$ and $n\ge 1$. This covers in
particular the case of $X_{\Delta_d}=\CP^2$.
The difference with Section \ref{sec:g=0 gen} is that now a floor diagram $\D$
contributing to 
$ \coef[i]{G_{\Delta_{a,0,n}}(0;s)}$ may have several maximal floors for
the order $\o$. Nevertheless for fixed $n$ and $i$, we show that the set of possible
configurations of these multiple maximal floors is finite and does not depend
on $a$.
In order to do so, we introduce the notion of \emph{capping
  tree}.
\begin{defi}\label{def:capping}
  A \emph{capping tree} 
with Newton polygon $\Delta_{a,n}$ is
  a couple
  $\T=(\Gamma, \omega)$ such that
  \begin{enumerate}
  \item $\Gamma$ is a connected weighted oriented tree
    with $a$ floors and with no
    sources nor sinks;
  \item $\Gamma$ has a unique minimal floor $v_1$, and
    $\Gamma\setminus \{v_1\}$ is not connected;    
\item   for every floor $v\in V(\Gamma)\setminus \{v_1\}$,
one has $ \dive(v)=n$.
  \end{enumerate}
  The codegree of a capping tree $\T$ with Newton polygon
  $\Delta_{a,n}$ is defined as
  \[
  \codeg(\T)=\frac{(a-1)(na-2)}{2}-\sum_{e\in E(\T)}(\omega(e)-1)
  \]
\end{defi}

\begin{figure}[h]
\begin{center}
  \begin{tabular}{ccc}
    \includegraphics[height=3cm, angle=0]{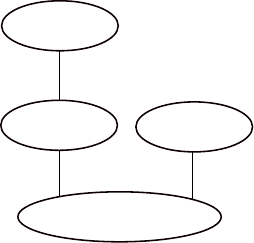}
     \put(-78,22){$2$}
   &\hspace{3cm}
   & \includegraphics[height=3cm, angle=0]{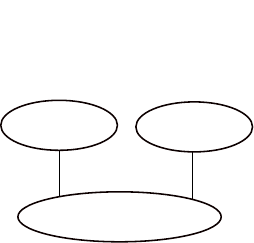}
     \put(-78,22){$2$}
      \put(-15,22){$2$}
 \end{tabular}
\end{center}
\caption{Two examples of capping trees of codegree 2}
\label{fig:ex top}
\end{figure}
\begin{exa}
  Examples of capping trees of codegree 2 and with Newton polygon $\Delta_{4,1}$ and
  $\Delta_{3,2}$  are depicted in Figure \ref{fig:ex top}. We use the
  same convention to depict capping trees as to depict floor diagrams.
\end{exa}

\begin{lemma}\label{lem:codeg capping}
  A capping tree with  Newton polygon
  $\Delta_{a,n}$ has codegree at least $n(a-2)$.  
\end{lemma}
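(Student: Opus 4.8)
The plan is to exploit the rigidity of the orientation of a capping tree, which forces all elevator weights to be determined by the underlying combinatorial tree, and then to reduce the inequality to a purely combinatorial extremal estimate on trees.

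First I would check that the underlying tree of a capping tree $\T=(\Gamma,\omega)$ with Newton polygon $\Delta_{a,n}$ is an arborescence rooted at $v_1$. Since $\Gamma$ has neither sources nor sinks, a floor is minimal for $\preccurlyeq$ precisely when it has no incoming elevator; by acyclicity such a floor exists, and by hypothesis $v_1$ is the only one. Hence each of the $a-1$ floors distinct from $v_1$ carries at least one incoming elevator, and since $\Gamma$ has exactly $a-1$ edges, each such floor $v$ carries exactly one incoming elevator $e_v$, whose tail is the parent of $v$. In this language condition $(2)$ of Definition~\ref{def:capping} simply says that $v_1$ has at least two children. The cases $a\le 2$ being vacuous or trivial, I would assume $a\ge 3$ from now on.

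Next I would compute the weights. Writing $T_v$ for the number of floors in the subtree rooted at $v$, an induction from the leaves using $\dive(v)=n$ for $v\ne v_1$ gives $\omega(e_v)=n\,T_v$. Summing over the $a-1$ elevators, $\sum_{e\in E(\T)}\omega(e)=n\sum_{v\ne v_1}T_v=n\sum_{v\in V(\Gamma)}\operatorname{depth}(v)$, where $\operatorname{depth}(v)$ is the number of elevators on the path between $v_1$ and $v$, the last equality being a double count (each floor $w$ lies in the subtree of exactly $\operatorname{depth}(w)$ floors other than $v_1$). Therefore
\[
\codeg(\T)=\frac{(a-1)(na-2)}{2}+(a-1)-n\sum_{v\in V(\Gamma)}\operatorname{depth}(v),
\]
and the lemma becomes equivalent to the inequality $\sum_{v\in V(\Gamma)}\operatorname{depth}(v)\le\frac{(a-1)(a-2)}{2}+1$, since substituting this bound and simplifying yields $\codeg(\T)\ge(a-1)(n-1)+(a-1)-n=n(a-2)$.

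The main obstacle is therefore this extremal estimate, valid for every arborescence on $a$ vertices whose root has at least two children. I would handle it by a straightening argument: replacing each of the $k\ge 2$ branches hanging off $v_1$ by a path on the same set of floors never decreases $\sum_v\operatorname{depth}(v)$ (the path being classically the depth-maximizing rooted tree on a given vertex set), after which this sum equals $\sum_{i=1}^k\binom{m_i+1}{2}$, where $m_1,\dots,m_k\ge 1$ are the branch sizes and $\sum_i m_i=a-1$. By convexity of $m\mapsto\binom{m+1}{2}$ this quantity is maximized by concentrating the mass, namely for $k=2$ with $\{m_1,m_2\}=\{a-2,1\}$, giving the value $\binom{a-1}{2}+1$. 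The extremal configuration is the ``broom'' formed by $v_1$, a single leaf attached to it, and a path through the remaining $a-2$ floors; in particular the bound $\codeg(\T)\ge n(a-2)$ is sharp. Some care will be needed to make the straightening move precise and to dispose of the degenerate values of $a$.
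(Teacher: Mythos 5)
Your proof is correct, and it takes a genuinely different route from the paper's. The paper keeps with its running technique of local moves: it applies a finite sequence of operations $A^+$ (together with Lemma~\ref{lem:codegree}, which says codegree drops by $\omega(e_2)$ under each move) to successively reduce $\T$ to a broom with $k=2$ path branches, and then two further $A^+$ moves to a reference configuration $\T'$ with $\codeg(\T)\ge\codeg(\T')+n(a-2)$; the lemma follows from $\codeg(\T')\ge 0$. You instead compute the codegree exactly: recognizing the underlying graph as an arborescence, deducing $\omega(e_v)=n\,T_v$ from the divergence constraint, and double-counting to express $\sum_e\omega(e)=n\sum_v\operatorname{depth}(v)$. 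This converts the statement into an extremal inequality for rooted trees with root degree $\ge 2$, which you settle by straightening each branch into a path and then exploiting the convexity of $m\mapsto\binom{m+1}{2}$. Both arguments identify the same sharp extremal configuration (the ``broom'' with branches of sizes $a-2$ and $1$), but yours has the advantage of producing a closed formula $\codeg(\T)=\frac{(a-1)(na-2)}{2}+(a-1)-n\sum_v\operatorname{depth}(v)$, which makes the sharpness transparent and requires no appeal to the codegree-monotonicity of the $A^\pm$ moves; the paper's proof has the advantage of uniformity with the arguments elsewhere in the section (notably Lemma~\ref{lem:min} and the capping-template analogue, Lemma~\ref{lem:codeg capping templ}). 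The only point to firm up is the assertion that the path maximizes total depth among rooted trees on a fixed vertex set, which you flag; the standard local improvement (when some vertex has two children, re-hang one subtree below the deepest leaf of the other, strictly increasing every depth in the moved subtree) disposes of it in a line.
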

\begin{proof}
  Let $\T$ be such a capping tree, and denote by
  $\omega_1,\cdots,\omega_k$ the weight of the elevators of $\T$ adjacent
  to $v_1$, and by $a_1,\cdots,a_k$ the number of floors of the
  corresponding connected component of $\T\setminus\{v_1\}$.
  By  Definition \ref{def:capping},
  one has
 $\omega_j=na_j$.
   By  a
finite succession of operations $A^+$ and applications 
of Lemma
\ref{lem:codegree}, we reduce the proof 
successively to the
case  when
\begin{enumerate}
  \item $\o$ induces a total order on each connected component of
    $\T\setminus\{v_1\}$;
    \item $k=2$.
\end{enumerate}
\begin{figure}[h]
\begin{center}
  \begin{tabular}{ccc}
    \includegraphics[height=4cm, angle=0]{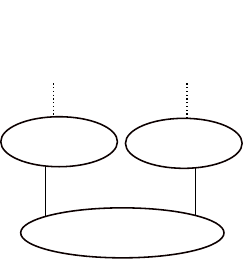}
    \put(-55,10){$v_1$}
        \put(-15,28){$na_1$}
    \put(-108,28){$na_2$}
   &\hspace{1cm}
   & \includegraphics[height=4cm, angle=0]{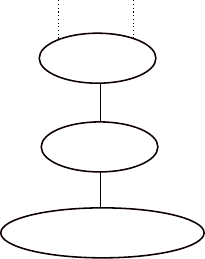}
      \put(-55,10){$v_1$}
   \put(-42,28){$n(a-1)$}
     \put(-42,67){$n(a-2)$}
    \\  $\mathcal T$&&  $\mathcal T'$
 \end{tabular}
\end{center}
\caption{Bounding $\codeg(\mathcal T)$ from below}
\label{fig:capping T'}
\end{figure}
By  two additional  operations $A^+$, we construct a capping tree
$\mathcal T'$ such that (see Figure \ref{fig:capping T'})
\[
\codeg(\mathcal T)\ge \codeg(\mathcal T')+n(a_1+a_2-1)= \codeg(\mathcal T')+n(a-2).
\]
This proves the lemma since $ \codeg(\mathcal T')\ge 0$.
\end{proof}

\begin{proof}[Proof of Theorem \ref{thm:main1 expl g02}]
 Let $\D$ be a floor diagram of genus 0, with Newton polygon
 $\Delta_{a,0,n}$, and of codegree at most $i$.
 Suppose that $\D$ has $k_b\ge 2$ minimal floors for $\o$.
Then exactly as in the proof of Lemma 
 \ref{lem:d rat}, we have that
  \[
  \codeg(\D)\ge  n(k_b-1)\left(a-\frac{k_b}{2}\right)\ge n(a-1)\ge
  n(i+1)>i.
 \]
This contradicts  our assumptions, and  $k_b=1$.

Suppose that $\D$ has at least two maximal floors.
 Denote by $v_o$ the lowest floor
of $\D$ having at least two adjacent outgoing elevators.
Since $k_b=1$, the order $\o$ induces a total ordering on
floors $v$ of $\D$ such that $v\o v_o$. Let $\T$ be the weighted 
subtree of $\D$ 
obtained by removing from $\D$ all elevators and floors strictly below
$v_o$, and denote by $a_o$ the number of floors of $\T$.
Suppose that $\T$ is not a capping tree, i.e. 
$E^{-\infty}(\T)\ne\emptyset$. By a finite succession of $A^-$
operations, we construct a floor diagram $\D'$ with the same floors
as $\D$, the same elevators as well, except for 
elevators in $E^{-\infty}(\T)$,
which become adjacent to $v_o$
in $\D'$.  By Lemma
\ref{lem:codegree}, we have
\[
\codeg(\D)>\codeg(\D').
\]
Let $\T'$ be the capping
tree  
obtained by removing from $\D'$ all elevators and floors strictly below
$v_o$. By Lemma \ref{lem:codeg capping},
 it  
has  codegree at least $n(a_o-2)$.
Since at least one elevator in $E^{-\infty}(\D')$ is adjacent to
$v_o$, we deduce that
\[
\codeg(\D')\ge n(a_o-2) + a-a_o = a +(n-1)a_o -2n.
\]
Since $a_o\ge 3$, we obtain
\[
\codeg(\D')\ge a +n-3\ge i.
\]
As a consequence we get that $\codeg(\D)>i$, 
contrary to our assumption that $\T$ is not a capping tree.

\begin{figure}[h]
\begin{center}
  \begin{tabular}{c}
    \includegraphics[height=9.2cm, angle=0]{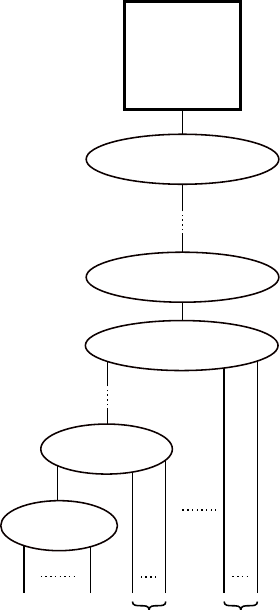}
     \put(-100,35){$v_1$}
   \put(-80,68){$v_2$}
     \put(-50,113){$v_{i+1}$}
   \put(-45,235){$\T$}
 \put(-62,-10){$u_1$}
  \put(-20,-10){$u_{i}$}
   \end{tabular}
\end{center}
\caption{$\codeg(\T)+\sum_{j=1}^i ju_j\le i$.}
\label{fig:d rat2}
\end{figure}
Hence the floor diagram $\D$
either is $\D_{a,0,n,u,0}$, or
looks like the floor diagram 
 $\D_{a,0,n,u,0}$,
except that the top part is replaced by a capping tree of codegree at
most $i$.
In any case $\D$ looks like the floor diagram depicted in Figure
\ref{fig:d rat2} where $\T$ is either a single vertex or a 
 capping tree of codegree at
 most $i$.
 Note that the number of edges $e$ of $\D$ with
 $\omega(e)\le i-\codeg(\D)$, as well as 
 the Laurent polynomial
 \[
 P(q)=\prod_{\substack{e\in E^0(\D)\\ \omega(e)\le i-\codeg(\D)}}[w(e)]^2
 \]
 do not depend on $a$. Indeed, let  $k$ be such that there
 exists $l\ge k$ with $u_l\ne 0$. Denoting by $e$ the elevator $e\in E^0(\D)$ adjacent to the floors $v_k$ and $v_{k+1}$, we have that
\[
\omega(e)=n(a-k)-\sum_{j=k}^i u_j> i-k+1 -\sum_{j=k}^i u_j\ge
i-\sum_{j=k}^i ju_j\ge i-\codeg(\D).
\]
Hence by Corollary \ref{cor:key2}, the coefficient
  $\coef[i-\codeg(\D)]{\mu(\D)}$ is polynomial in $a$ of degree $i-\codeg(\D)$.
Furthermore since $an\ge i+2s$, any increasing bijection
\[
\left\{\eta(\D)-\Card(V(\T)  \cup E(\T))+1  ,\cdots, \eta(\D)\right\}\longrightarrow V(\T)  \cup
E(\T)
\]
extends to exactly $\widetilde \nu_{u}(a,0,n,s)$ markings of $\D$
compatible with $\{\{1,2\},\cdots,\{2s-1,2s\}\}$.

Since there exists finitely many such increasing maps, 
and  
finitely many capping trees of codegree at
most $i$ by Lemma \ref{lem:codeg capping}, the  end of the proof
 is now entirely analogous to  the  proof of
Theorem \ref{thm:main1 expl g0}.
\end{proof}

\subsection{Polynomiality with respect to $s$}
We use a different method to prove polynomiality with respect to $s$
when $\Delta$ is fixed, namely we prove that the
\emph{$i$-th discrete derivative} of the map
$s\mapsto \coef[i]{G_\Delta(0;s)}$ is constant.
Recall that the
$n$-th discrete derivative of a univariate polynomial $P(X)$ is defined by
\[
P^{(n)}(X)=\sum_{l=0}^n(-1)^l{n \choose l}P(X+l).
\]

\begin{lemma}\label{lem:degree derivation}
 One has
  \[
  (P^{(n)})^{(1)}(X)=P^{(n+1)}(X) \qquad
  \mbox{and}\qquad
  \deg P^{(n)}(X)=\deg P(X) -n.
  \]
  Furthermore, if 
   the leading coefficient of $P(X)$ is $a$, then the
  leading coefficient of $P^{(n)}(X)$ is
  \[
  (-1)^n\ a\deg P(X) (\deg P(X)-1)\cdots (\deg P(X)-n+1).
  \]
\end{lemma}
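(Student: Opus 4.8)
The plan is to reduce everything to the effect of a single discrete derivative on the leading term. First I would record the elementary one-step fact: if $Q(X)=cX^e+\cdots$ with $e\ge 1$ (dots denoting terms of lower degree), then
\[
Q^{(1)}(X)=Q(X)-Q(X+1)=-c\bigl((X+1)^e-X^e\bigr)+\cdots=-ce\,X^{e-1}+\cdots,
\]
since $(X+1)^e-X^e$ has degree $e-1$ and leading coefficient $e$, while the lower-degree part of $Q$ contributes only terms of degree at most $e-2$. Hence $Q^{(1)}$ has degree $\deg Q-1$ and leading coefficient $-c\deg Q$, and $Q^{(1)}=0$ when $Q$ is constant.

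Next I would prove the composition law $(P^{(n)})^{(1)}=P^{(n+1)}$ by a direct manipulation of the defining sum. Writing $(P^{(n)})^{(1)}(X)=P^{(n)}(X)-P^{(n)}(X+1)$, expanding both terms with the definition, reindexing the second sum by the shift $l\mapsto l-1$, and applying Pascal's rule $\binom{n}{l}+\binom{n}{l-1}=\binom{n+1}{l}$ (with the conventions $\binom{n}{-1}=\binom{n}{n+1}=0$) collapses the expression to $\sum_{l=0}^{n+1}(-1)^l\binom{n+1}{l}P(X+l)=P^{(n+1)}(X)$. Equivalently, one checks that $P\mapsto P^{(n)}$ is the $n$-th iterate of the single operator $R\mapsto R(X)-R(X+1)$, which makes the composition law automatic.

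Finally, the degree and leading-coefficient statements follow by induction on $n$, the case $n=0$ being trivial. Assuming the claim for $n-1$, so that (with $d=\deg P$) the polynomial $P^{(n-1)}$ has degree $d-n+1$ and leading coefficient $(-1)^{n-1}a\,d(d-1)\cdots(d-n+2)$, I apply the one-step fact above to $Q=P^{(n-1)}$, using $P^{(n)}=(P^{(n-1)})^{(1)}$ from the composition law: this gives $\deg P^{(n)}=d-n$ and leading coefficient $-(d-n+1)(-1)^{n-1}a\,d(d-1)\cdots(d-n+2)=(-1)^n a\,d(d-1)\cdots(d-n+1)$, exactly as asserted. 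This is valid for $n\le d$; for $n=d$ one gets the nonzero constant $(-1)^d a\,d!$ and hence $P^{(d+1)}=0$, which is consistent with reading the displayed formula (whose falling-factorial product then contains the factor $0$) as giving leading coefficient $0$. There is no genuine obstacle in this argument; the only points needing care are the sign convention --- here $P^{(1)}(X)=P(X)-P(X+1)$ is the negative of the standard forward difference, which is precisely what produces the sign $(-1)^n$ --- and the bookkeeping of binomial-coefficient conventions in the reindexing step.
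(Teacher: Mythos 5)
Your proof is correct and follows essentially the same route as the paper: establish $(P^{(n)})^{(1)}=P^{(n+1)}$ by reindexing the defining sum and applying Pascal's rule, then deduce the degree and leading-coefficient statements by induction on $n$ from the one-step case. The only difference is presentational---you isolate the one-step degree-drop computation as an explicit preliminary fact, whereas the paper merely calls the base case $n=1$ ``straightforward''---but the substance is identical.
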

\begin{proof}
  The first assertion is a simple application of Descartes' rule for binomial
  coefficients:
  \begin{align*}
    (P^{(n)})^{(1)}(X) &= P^{(n)}(X)- P^{(n)}(X+1)
    \\ &= \sum_{l=0}^n(-1)^l{n \choose l}P(X+l) -
    \sum_{l=1}^{n+1}(-1)^{l-1}{n \choose l-1}P(X+l)
    \\ &=  \sum_{l=0}^{n+1}(-1)^l\left( {n \choose l} + {n \choose
      l-1} \right) P(X+l)
    \\ &= P^{(n+1)}(X).
  \end{align*}
Hence  the second and third assertions follow by induction starting with
the straightforward 
 case $n=1$. 
\end{proof}

\begin{proof}[Proof of Theorem \ref{thm:poly s}]
Recall that
\[
\eta(\Delta)=\Card(\partial \Delta\cap \Z^2)-1,
\qquad \iota(\Delta)=\Card(\Delta\cap \Z^2)
- \Card(\partial \Delta\cap \Z^2),
\qquad\mbox{and}\qquad
s_{max}=\left[ \frac{\eta(\Delta)}{2}\right].
\]
 We denote by $a_i(X)$ the polynomial of degree  at most $s_{max}$
  that interpolates the values
 \[
 \coef[i]{G_\Delta(0;0)},\cdots,\coef[i]{G_\Delta(0;s_{max})}.
 \]
  By Lemma \ref{lem:degree derivation},
the polynomial $a_i^{(i)}(X)$ has degree at most $s_{max}-i$, and
  we are left to prove that
  \[
  a_i^{(i)}(0)=\cdots
  =a_i^{(i)}(s_{max}-i)=2^i.
\]

Let $s\in\{0,1,\cdots, s_{max}-i\}$, and 
$S$ be a
pairing of order $s$ of the set $\{2i+1,\cdots, \eta(\Delta)\}$.
Given $I\subset \{1,\dots,i\}$, we denote by
$S^I$ 
the pairing
\[
S^I=S\cup \bigcup_{j\in I}\{\{2j-1,2j\}\}.
\]
Given  $(\D,m)$  a marked floor diagram with Newton polygon
$\Delta$ and of genus $0$, we define
\[
\kappa(\D,m)(q)=\sum_{l=0}^{i}\sum_{\tiny{\begin{array}{c}I\subset  \{1,\cdots,i\}\\
|I|=l\end{array}}}  (-1)^l \mu_{S^I}(\D,m)(q).
\]
By Theorem \ref{thm:psi fd}, we have
\begin{align*}
\sum_{j=-\iota(\Delta)}^{\iota(\Delta)} a_{\iota(\Delta)-|j|}^{(i)}(s)q^j & =
\sum_{l=0}^{i}\sum_{\tiny{\begin{array}{c}I\subset  \{1,\cdots,i\}\\
|I|=l\end{array}}}  (-1)^l \sum_{(\D,m)} \mu_{S^I}(\D,m)(q)
\\ & =
\sum_{(\D,m)} \kappa(\D,m)(q),
\end{align*}
where the sum over $(\D,m)$ runs over all isomorphism classes of
marked floor diagrams with Newton polygon
$\Delta$ and of genus $0$.

Let  $(\D,m)$ be one of these marked floor diagrams, and
denote by $i_0$ the minimal element of $\{1,\cdots,\eta(\Delta)\}$
such that 
$m(i_0)\in V(\D)$. We also
denote by
$J\subset\{1,\cdots,2i\}$ the set of
elements $j$ such that $m(j)$ is mapped to an elevator in
$E^{-\infty}(\D)$ adjacent to $m(i_0)$. 

\medskip
\noindent {\bf Step 1.} We claim that if  the set $J\cup\{i_0\}$ contains a
 pair $\{2k-1,2k\}$ with $k\le i$, then $\kappa(\D,m)(q)=0$.

Let $I\subset  \{1,\cdots,i\}\setminus\{k\}$. It follows from 
Definition \ref{def:refined mult s} that
\[
\mu_{S^I}(\D,m)(q)=\mu_{S^{I\cup\{k\}}}(\D,m)(q).
\]
Hence one has
\begin{align*}
\kappa(\D,m)(q)&= \sum_{l=0}^{i}\sum_{\tiny{\begin{array}{c}I\subset  \{1,\cdots,i\}\\
|I|=l\end{array}}}  (-1)^l \mu_{S^I}(\D,m)(q)
\\ &=  \sum_{l=0}^{i-1}
 \sum_{\tiny{\begin{array}{c}I\subset  \{1,\cdots,i\}\setminus\{k\}\\
|I|=l\end{array}}} \left( (-1)^l \mu_{S^I}(\D,m)(q) +
  (-1)^{l+1} \mu_{S^{I\cup \{k\}}}(\D,m)(q) \right)
  \\ &=0,
\end{align*}
and the claim is proved.
We assume from now on
that the set $J\cup\{i_0\}$ contains no
 pair $\{2k-1,2k\}$ with $k\le i$.
 
\medskip
\noindent {\bf Step 2.} 
We first study the case when $2i\le d_b(\Delta)$.

If $i_0\le 2i$, then
$|J|\le i-1$, and no element $k>2i$ is
mapped to an elevator in
$E^{-\infty}(\D)$ adjacent to $m(i_0)$.
The codegree of $(\D,m)$ is then at least
$d_b(\Delta) -|J|\ge d_b(\Delta)-i +1$ by
Lemma  \ref{lem:codegree}, see Figure \ref{fig:ineq codeg}a).
Hence this codegree is  at least
$i+1$ by assumption, which means that $\kappa(\D,m)(q)$ does not
contribute to $a_{i}^{(i)}(s)$. 
\begin{figure}[h]
\begin{center}
\begin{tabular}{ccc}
  \includegraphics[height=2.5cm, angle=0]{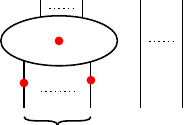}
  \put(-75,-10){$J$}
 & \hspace{2cm} &
  \includegraphics[height=2.5cm, angle=0]{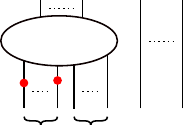}
 \put(-85,-10){$J$}
 \put(-57,-10){$K$}

\\  \\ a) $i_0\le 2i$ &&b) $i_0>2i$
\end{tabular}
\end{center}
\caption{Illustration of Step 2;  red dots represent points in
$m(\{1,\cdots,2i\})$.}
\label{fig:ineq codeg}
\end{figure}

Suppose now that $i_0>2i$, so in particular 
$m(\{1,\cdots, 2i\})\subset E^{-\infty}(\D)$. 
We denote by
$K\subset\{2i+1,\cdots,\eta(\Delta)\}$ the set of
elements $j$ such that $m(j)$ is mapped to an elevator in
$E^{-\infty}(\D)$ adjacent to $m(i_0)$. Note that
$|K|\le d_b(\Delta) -2i$.
Hence  Lemma  \ref{lem:codegree} implies that
 $(\D,m)$ has codegree at least
 \[
 d_b(\Delta)-|J|- |K|\ge  d_b(\Delta) -i-
 |K| =i + (d_b(\Delta) -2i-|K|),
 \]
 see Figure \ref{fig:ineq codeg}b).
 Hence $\kappa(\D,m)(q)$ can contribute
 to $a_{i}^{(i)}(s)$  only if
 $|K|= d_b(\Delta) -2i$.
 It follows from Lemma  \ref{lem:codegree} again that
$\kappa(\D,m)(q)$  contributes
 to $a_{i}^{(i)}(s)$ if and only if 
\begin{itemize}
  \item the order $\preccurlyeq$ is total on the set of floors of $\D$;
  \item elevators  in $E^{+\infty}(\D)$
    are all adjacent to the maximal floor
    of $\D$;
\item 
$m(\{1,\cdots,2i\}\setminus  J)$ consists of elevators in
$E^{-\infty}(\D)$ adjacent to the second lowest floor of $\D$;
\item any elevator in $E^{-\infty}(\D)\setminus m(\{1,\cdots,2i\})$ is
adjacent to $m(i_0)$;
\item  The set $ J$ contains exactly $i$ elements,
and no pair $\{2k-1,2k\}$;
\item the function $l:V(\D)\to d_l(\Delta)$ is decreasing, and the function
  $r:V(\D)\to d_r(\Delta)$ is increasing.

\end{itemize}
For such $(\D,m)$, we have
\[
\kappa(\D,m)(q)=\mu_{S}(\D,m)(q),
\]
since $\mu_{ S^I}(\D,m)(q)=0$ if $I\ne\emptyset$. The coefficient  of codegree~$0$ of
$\mu_{S}(\D,m)(q)$ is 1 by Definition \ref{def:refined mult s}.
The floor diagram 
$\D$ has codegree $i$, and there are
exactly $2^i$ such marked floor diagrams $(\D,m)$, one for each possible set
$J$, so we obtain that $a_i^{(i)}(s)=2^i$ as claimed. 

\medskip
\noindent {\bf Step 3.} 
We assume now that $2i\in\{d_b(\Delta)+1,d_b(\Delta)+2\}$.
In this case we necessarily have $i_0\le 2i$. As in Step 2, we have
$|J|\le i-1$, and the codegree of $(\D,m)$ is  at least
$d_b(\Delta) -|J|\ge d_b(\Delta)-i +1$ by
Lemma  \ref{lem:codegree}. Hence
$\kappa(\D,m)(q)$ can contribute
 to $a_{i}^{(i)}(s)$  only if
one of the following sets of 
conditions is satisfied:
\begin{enumerate}
\item $(\D,m)$ has codegree $i$, with
$2i=d_b(\Delta)+1$ and $|J|= i-1$;
\item $(\D,m)$ has codegree $i-1$, with
$2i=d_b(\Delta)+2$ and $|J|= i-1$;
\item $(\D,m)$ has codegree $i$, with
$2i=d_b(\Delta)+2$ and $|J|= i-1$;
\item $(\D,m)$ has codegree $i$, with
$2i=d_b(\Delta)+2$ and $|J|= i-2$.
\end{enumerate}
We end by studying these cases one by one. Recall that in the last three
cases, we make the additional assumption  that $\Delta=\Delta_{a,b,n}$. In this case, the
conditions $an+b+2=2i$ and $\iota(\Delta)\ge i$ ensure that $n\le i-2$.

\begin{enumerate}
\item $(\D,m)$ has codegree $i$, with
$2i=d_b(\Delta)+1$ and $|J|= i-1$.
As in Step 2, the Laurent polynomial
$\kappa(\D,m)(q)$  contributes
 to $a_{i}^{(i)}(s)$ if and only if (see Figure \ref{fig:ineq codeg2}a):
\begin{itemize}
  \item the order $\preccurlyeq$ is total on the set of floors of $\D$;
  \item elevators  in $E^{+\infty}(\D)$
    are all adjacent to the maximal floor
    of $\D$;
\item 
$m(\{1,\cdots,2i\}\setminus  (J\cup \{i_0\}))$ consists of all elevators in
$E^{-\infty}(\D)$ adjacent to the second lowest floor of $\D$;
\item the function $l:V(\D)\to d_l(\Delta)$ is decreasing, and the function
  $r:V(\D)\to d_r(\Delta)$ is increasing.
\end{itemize}
\begin{figure}[h]
\begin{center}
\begin{tabular}{ccc}
  \includegraphics[height=4cm, angle=0]{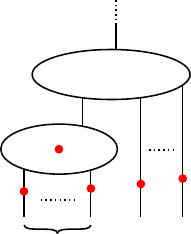}
  \put(-70,-10){$J$}
  \put(-65,57){$w$}
  \put(-50,93){$w'$}
 & \hspace{0cm} &
  \includegraphics[height=4cm, angle=0]{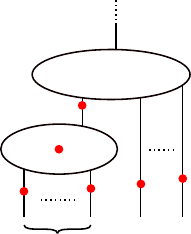}
  \put(-70,-10){$J$}
  \put(-65,57){$w$}
  \put(-50,93){$w'$}

\\  \\ a) $d_b(\Delta)=2i-1$ and $|J|=i-1$ &&b)  $d_b(\Delta)=2i-2$ and
  $|J|=i-1$  or $|J|=i-2$
  
\end{tabular}
\end{center}
\caption{Illustration of Step 3;  red dots represent points in
$m(\{1,\cdots,2i\})$.}
\label{fig:ineq codeg2}
\end{figure}
For such $(\D,m)$, we have
\[
\kappa(\D,m)(q)=\mu_{S}(\D,m)(q),
\]
since $\mu_{ S^I}(\D,m)(q)=0$ if $I\ne\emptyset$.
The coefficient of codegree~$0$ of 
$\mu_{ S}(\D,m)(q)$ is 1, and
there are
exactly $2^i$ such marked floor diagrams, one for each possible set
$J\cup\{i_0\}$. We obtain again that $a_i^{(i)}(s)=2^i$. 

\item $(\D,m)$ has codegree $i-1$, with
$2i=d_b(\Delta)+2$ and $|J|= i-1$.
As in Step 2, the Laurent polynomial
$\kappa(\D,m)(q)$  contributes
 to $a_{i}^{(i)}(s)$ if and only if (see Figures \ref{fig:d rat} and \ref{fig:ineq codeg2}b):
\begin{itemize}
 \item $\D=\D_{a,b,n,(i-1),0}$;
    \item $i_0=2i-1$, and $m(2i)$ is the elevator of $\D$ adjacent to and
  oriented away from $m(i_0)$.
\item 
$m(\{1,\cdots,2i-2\}\setminus  J)$ consists of all elevators in
$E^{-\infty}(\D)$ adjacent to the second lowest floor of $\D$.
\end{itemize}
For such $(\D,m)$, we have
\[
\kappa(\D,m)(q)=\mu_{S}(\D,m)(q) - \mu_{S^{\{i\}}}(\D,m)(q) ,
\]
since $\mu_{ S^I}(\D,m)(q)=0$ if $I\not\subset\{i\}$.
We have $[w]^2(q)-[w](q^2)=0$ if $w=1$, and
\[
[w]^2(q)-[w](q^2)=0q^{-w+1} + 2q^{-w+2} +...
\]
if $w\ge 2$.
Since $w=i-1-n$ in Figure \ref{fig:ineq codeg2}b), 
we have by Definition \ref{def:refined mult s} that
the coefficient of codegree~$1$ of 
$\kappa(\D,m)(q)$ is $0$ if $n=i-2$, and is $2$ if $n\le i-3$.
There are
exactly $2^{i-1}$ such marked floor diagrams, one for each possible set
$J$. So the total contribution of such $(\D,m)$ to
$a_i^{(i)}(s)$ is $0$ if $n=i-2$ and is $2\times 2^{i-1}=2^i$ if $n\le i-3$.

\item $(\D,m)$ has codegree $i$, with
$2i=d_b(\Delta)+2$ and $|J|= i-1$.
As in the previous cases $\kappa(\D,m)(q)$ can contribute
 to $a_{i}^{(i)}(s)$  only if $i_0=2i-1$, 
 and $m(2i)$
 and $m(2i-1)$ are not adjacent. This is possible if and
 only if  both 
 $m(2i-1)$ and $m(2i)$ are floors and $n=i-2$, see Figure \ref{fig:ineq codeg3}.
\begin{figure}[h]
\begin{center}
  \includegraphics[height=5cm, angle=0]{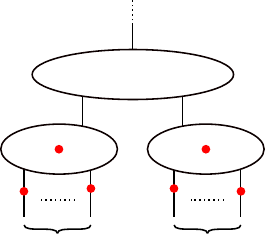}
  \put(-158,-10){$i-1=n+1$}
  \put(-65,-10){$i-1=n+1$}
\end{center}
\caption{Illustration of Step 3;  red dots represent points in
$m(\{1,\cdots,2i\})$.}
\label{fig:ineq codeg3}
\end{figure}
In this case $\kappa(\D,m)(q)=\mu_{S}(\D,m)(q)$, and
the coefficient of codegree~$0$ of 
$\mu_{ S}(\D,m)(q)$ is $1$. There are
exactly $2^i$ such marked floor diagrams, so the total contribution of
such $(\D,m)$ to 
$a_i^{(i)}(s)$ is $2^i$.

\item $(\D,m)$ has codegree $i$, with
$2i=d_b(\Delta)+2$ and $|J|= i-2$.
As in Step 2, the marked floor diagram
$\kappa(\D,m)(q)$  may contribute
 to $a_{i}^{(i)}(s)$  only if (see Figures \ref{fig:d rat} and \ref{fig:ineq codeg2}b):
\begin{itemize}
  \item $\D=\D_{a,b,n,(i),0}$;
    \item $i_0=2i-3$ or $i_0=2i-2$;
\item  $m(2i-1)$ or $m(2i)$ is the elevator of $\D$ adjacent to and
  oriented away from $m(i_0)$.
\item 
$m(\{1,\cdots,2i\}\setminus  (J\cup \{i_0\}))$ consists of all elevators
adjacent to and oriented toward  the second lowest floor of $\D$.
\end{itemize}
For such $(\D,m)$, we have
\[
\kappa(\D,m)(q)=\mu_{S}(\D,m)(q) - \mu_{S^{\{i\}}}(\D,m)(q) ,
\]
since $\mu_{ S^I}(\D,m)(q)=0$ if $I\not\subset\{i\}$.
We have 
\[
[w]^2(q)-\frac{[w][w+1]}{[2]}(q)=0q^{-w+1} + ...,
\]
so by Definition \ref{def:refined mult s}
the coefficient of codegree~$0$ of 
$\mu_{ S}(\D,m)(q)$ is $0$. Hence the total contribution of
such $(\D,m)$ to
$a_i^{(i)}(s)$ is $0$.
\end{enumerate}
Summing up all contributions, we obtain that $a_i^{(i)}(s)=2^i$ as announced.
\end{proof}


\section{Higher genus case}\label{sec:gen}
The generalization of Theorems \ref{thm:main1} and
\ref{thm:main1 expl g02} to higher genus is quite technical and requires some care.
Following \cite{FM} and \cite{ArdBlo}, we prove Theorems \ref{thm:maing},
\ref{thm:maing n=0}, and \ref{thm:maing2}
by decomposing floor diagrams into elementary building blocks that
we call \emph{templates}. Although  templates from this paper 
differ from those from \cite{FM} and \cite{ArdBlo}, we 
borrow their terminology since  we follow the overall strategy exposed
in \cite{FM}.

\subsection{Templates}

Recall that the orientation of an oriented acyclic graph $\Gamma$
induces a partial ordering $\o$
on $\Gamma$.
Such an oriented graph $\Gamma$ is said to be \emph{layered} if $\o$
induces a total order on vertices of $\Gamma$.
A layered graph $\Gamma$ is necessarily connected. 
We say that an edge $e$ of  $\Gamma$
is \emph{separating} if $\Gamma\setminus\{e\}$ is disconnected, and if
$e$ is comparable with any element of $\Gamma\setminus\{e\}$.
A \emph{short edge} of $\Gamma$ is an edge 
connecting two consecutive vertices of $\Gamma$, and we denote by
$E^c(\Gamma)$ the set of short edges of $\Gamma$. 
\begin{defi}\label{defi:template}
  A \emph{pre-template} is a couple~$\Theta=(\Gamma,\omega)$ such that
\begin{enumerate}
\item $\Gamma$ is a layered acyclic oriented graph with no separating edge;
\item
  $\omega$ is a weight function 
  $E(\Gamma)\setminus E^c(\Gamma)\to \Z_{>0}$;
\item every edge in $E^{\pm\infty}(\Gamma)$ has weight 1.
\end{enumerate}

One says that  $\Theta=(\Gamma,\omega)$ is a \emph{template} if it
satisfies the additional condition:
\begin{itemize}
\item[(4)] $E^{+\infty}(\Gamma)=\emptyset$ or
  $E^{-\infty}(\Gamma)=\emptyset$.
\end{itemize}
\end{defi}

Similarly to floor diagrams, we will not distinguish between a
pre-template $\Theta$ and its underlying graph, and the \emph{genus} of
$\Theta$ is defined to be its
first Betti number.
A template $\Theta$ which is not reduced to a vertex and for which
$E^{\pm\infty}(\Theta)=\emptyset$ is called \emph{closed}.
Denoting by
$v_1\prec v_2 \prec \cdots \prec v_{l(\Theta)}$
the vertices of~$\Theta$, we define $c(e)$ for a non-short edge $e$ by
\begin{itemize}
\item $c(e)=j-1$ if $e\in E^{-\infty}(\Theta)$ is adjacent to $v_j$;
\item $c(e)=j$ if $e\in E^{+\infty}(\Theta)$ is adjacent to $v_{l(\Theta)-j}$;
\item $c(e)= (k-j-1)\ \omega(e)$ if
  $e\in E^0(\Theta)\setminus  E^c(\Theta)$
    is adjacent to $v_j$ and $v_k$ with $v_j\o v_k$.
\end{itemize}
Finally, we defined the \emph{codegree} of $\Theta$ by
\[
\codeg(\Theta)=\sum_{e\in E(\Gamma)\setminus  E^c(\Gamma)} c(e).
\]
The integer $l(\Theta)$ is called the \emph{length} of $\Theta$.

\begin{exa}
  We depict in Figure \ref{fig:ex temp} all templates of genus at most 1 and
  codegree at most 2. Note that for a fixed $g$ and $i$, there are
  finitely many templates of genus $g$ and codegree $i$.
\begin{figure}[h]
\begin{center}
\begin{tabular}{|l|c|c|c|c|c|c|c|c|c|c|}
 \cline{2-11} \multicolumn{1}{c|}{} & \includegraphics[width=0.75cm]{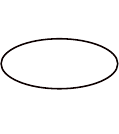}&
\includegraphics[width=1cm]{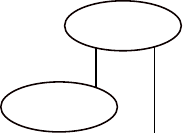}&
  \includegraphics[width=1cm, origin=c,  angle=180]{Figures1/T4.pdf}&
  \includegraphics[width=1cm]{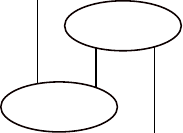}&
  \includegraphics[width=1cm]{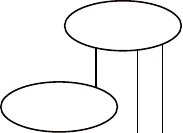}&
  \includegraphics[width=1cm, origin=c,  angle=180]{Figures1/T5.pdf}&
  \includegraphics[width=1cm]{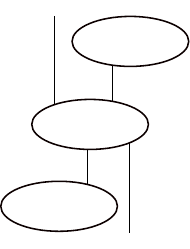}&
   \includegraphics[width=1cm]{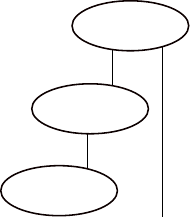}&
  \includegraphics[width=1cm, origin=c,  angle=180]{Figures1/T6.pdf}&
   \includegraphics[width=0.75cm]{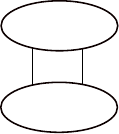}
\\ \hline
  \mbox{genus}& $0$ & $0$ & $0$& 0  & $0$ & $0$ & $0$ &  0&0 &1
  \\ \hline \mbox{codegree}&$0$ &$1$&$1$ & 2&$2$&$2$ & $2$ &2&2&0
   \\ \hline \mbox{length}&$1$ &$2$&$2$& 2 &$2$&$2$ & $3$ &3&3 &2
 \\\hline
\end{tabular}
\\ $  $ \\$  $ \\
\begin{tabular}{|l|c|c|c|c|c|c|c|c|c|c|c|}
 \cline{2-11} \multicolumn{1}{c|}{} &
  \includegraphics[width=1cm]{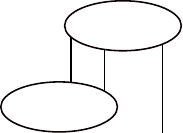}&
  \includegraphics[width=1cm, origin=c,  angle=180]{Figures1/T7.pdf}&
  \includegraphics[width=1cm]{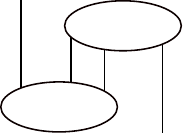}&
  \includegraphics[width=1cm]{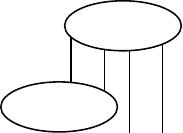}&
  \includegraphics[width=1cm, origin=c,  angle=180]{Figures1/T9.pdf}&
   \includegraphics[width=1cm]{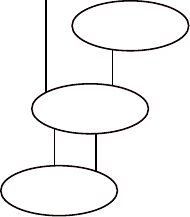}&
  \includegraphics[width=1cm, origin=c,  angle=180]{Figures1/T10b3.pdf}&
 \includegraphics[width=1cm]{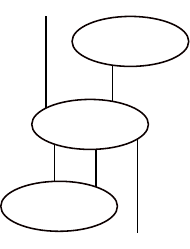}&
 \includegraphics[width=1cm, origin=c,  angle=180]{Figures1/T10b4.pdf}&
  \includegraphics[width=1cm]{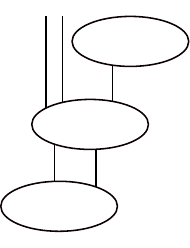}
 \\ \hline
  \mbox{genus} & $1$  & $1$&   $1$  & $1$  & $1$  &1&1&
  1 &1 &1 
  \\ \hline \mbox{codegree}&  1 &1 &2 & $2$ & $2$ & 1 & 1 &
  2&2 &2
   \\ \hline \mbox{length} & 2 &
  2&$2$ & $2$ & 2 & 3 &
  3 & 3& 3&3
 \\\hline
\end{tabular}
\\ $  $ \\$  $ \\
\begin{tabular}{|l|c|c|c|c|c|c|c|c|c|c|} 
 \cline{2-11} \multicolumn{1}{c|}{}&
 \includegraphics[width=1cm, origin=c,  angle=180]{Figures1/T10b5.pdf}&
  \includegraphics[width=1cm]{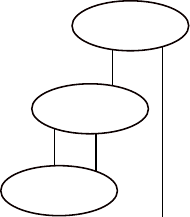}&
  \includegraphics[width=1cm, origin=c,  angle=180]{Figures1/T10.pdf}&
   \includegraphics[width=1cm, origin=c,  angle=180]{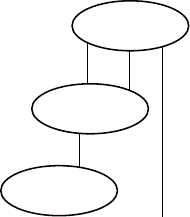}&
 \includegraphics[width=1cm]{Figures1/T11.pdf}&
 \hspace{0.5ex} \includegraphics[width=0.9cm]{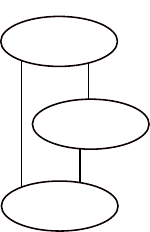}
  \put(-30,15){2}&
 \includegraphics[width=1cm]{Figures1/T8.pdf}&
  \includegraphics[width=0.9cm]{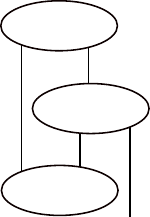}&
  \includegraphics[width=0.9cm, origin=c,  angle=180]{Figures1/T13.pdf}&
  \includegraphics[width=1cm]{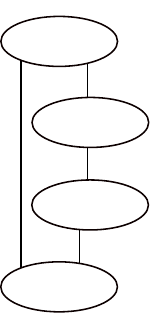}
  \\ \hline
  \mbox{genus}&1&  $1$ & $1$ & $1$ & $1$ &  $1$ & $1$ & $1$ & $1$ & 1
  \\ \hline \mbox{codegree}&2& $2$ &$2$& 2 & 2 & 2 & 1& 2 & 2 &2 
    \\ \hline \mbox{length}&$3$ &$3$&$3$&$3$&$3$ & $3$ & $3$ & 3 & 3 &
   4 
 \\\hline
\end{tabular}
\end{center}
\caption{Templates of genus at most 1 and codegree at most 2.}
\label{fig:ex temp}
\end{figure}
\end{exa}

\begin{lemma}\label{lem:temp c+g}
  Any pre-template $\Theta$ satisfies
  \[
  \codeg(\Theta)+g(\Theta)\ge l(\Theta)-1.
  \]
\end{lemma}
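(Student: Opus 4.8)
The plan is to induct on the number of edges of $\Theta$, peeling off one non-short edge at a time and tracking how both $\codeg$ and $g$ change, while comparing against the change in length $l(\Theta)$. First I would set up the base case: if $\Theta$ has no non-short edge at all, then since $\Theta$ is layered and has no separating edge, and since a layered graph whose every edge is short is just a path of short edges (which \emph{are} separating unless $l(\Theta)=1$), the only possibility is $l(\Theta)=1$, and then $\codeg(\Theta)=g(\Theta)=0=l(\Theta)-1$. So the inequality holds with equality in the base case.

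For the inductive step, suppose $\Theta$ has at least one non-short edge $e$. I would distinguish two cases according to whether $\Theta\setminus\{e\}$ is connected. If it is disconnected, it splits into exactly two layered pieces $\Theta_1\prec\Theta_2$ (two, because $\Theta$ is layered and $e$ joins a vertex of one to a vertex of the other); here $g(\Theta)=g(\Theta_1)+g(\Theta_2)$, $l(\Theta)=l(\Theta_1)+l(\Theta_2)$, and $\codeg(\Theta)=\codeg(\Theta_1)+\codeg(\Theta_2)+c(e)$ where $c(e)=(k-j-1)\omega(e)$ with $v_j,v_k$ the endpoints of $e$; crucially, because $e$ is not separating, there must be a vertex of $\Theta_1$ strictly above $v_j$ or a vertex of $\Theta_2$ strictly below $v_k$ bypassed by another edge — in fact the key observation is that $\Theta_i$ need not itself be a template (it may have a separating edge), so it is cleaner to argue directly that $c(e)\ge l(\Theta)-1-(\text{edges already accounted for})$. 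If $\Theta\setminus\{e\}$ is connected, then $g(\Theta\setminus\{e\})=g(\Theta)-1$, $l$ is unchanged, and $\codeg$ drops by $c(e)\ge 0$; after removing $e$ the graph may acquire separating edges, so I would instead remove a maximal chain of short separating edges together, or better, restructure: repeatedly contract short separating edges, which changes neither $g$ nor $\codeg$ but decreases $l$ by one each time, reducing to a template with no separating short edges, on which the bound can be checked.

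Rather than the case analysis above, a cleaner route is the following direct counting argument, which I would actually carry out. Write $v_1\prec\cdots\prec v_{l}$ for the vertices, $l=l(\Theta)$. For each index $1\le j\le l-1$, consider the "cut" between $v_j$ and $v_{j+1}$: since $e$ ranges over non-short edges and there are no separating edges, for each such $j$ there is at least one non-short edge $e$ whose endpoints (or infinite endpoint's attachment) straddle the cut, i.e. with $c(e)$ receiving a contribution of at least $\omega(e)\ge 1$ from crossing that cut — unless the cut is "bridged" only through a cycle, in which case a Betti-number unit is consumed. Making this precise: assign to each of the $l-1$ cuts either a non-short edge crossing it (contributing $\ge 1$ to $\codeg$) or a hole of $\Theta$; since each non-short edge $e$ crossing $m$ cuts contributes exactly $c(e)\ge m$ (for $e\in E^0$, $c(e)=(k-j-1)\omega(e)\ge k-j-1=m$; for infinite edges similarly $c(e)=m$), and since the first Betti number bounds how many cuts can be "charged to holes", we get $\codeg(\Theta)+g(\Theta)\ge l-1$. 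The main obstacle I anticipate is the bookkeeping in this assignment step — showing that every cut is either crossed by a non-short edge or "uses up" a distinct unit of first Betti number, without double-counting — which amounts to a careful argument that in a layered graph with no separating edge, the number of cuts not crossed by any non-short edge is at most $g(\Theta)$; this should follow by counting edges across each cut and using the Euler-characteristic formula $g(\Theta)=\Card(E(\Theta))-\Card(V(\Theta))+1$ restricted cut-by-cut.
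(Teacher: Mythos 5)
Your cut-counting plan has a genuine off-by-one gap that the proposed assignment scheme cannot absorb. A non-short edge $e\in E^0$ from $v_j$ to $v_k$ crosses the $k-j$ cuts numbered $j,j+1,\dots,k-1$, but its codegree contribution is $c(e)=(k-j-1)\omega(e)\ge k-j-1$; you write ``$c(e)\ge k-j-1=m$'' while also treating $m$ as the number of cuts crossed, but those two quantities differ by one. As a result, the statement you reduce to at the end --- ``the number of cuts not crossed by any non-short edge is at most $g(\Theta)$'' --- is true but not enough. Take $l=3$ with three weight-$1$ edges $v_1v_2$, $v_2v_3$, $v_1v_3$: this is a template, every cut is crossed by the non-short edge $v_1v_3$, so your set of ``uncovered'' cuts is empty, yet $\codeg(\Theta)=1<2=l-1$; the inequality only holds because $g(\Theta)=1$ takes up the slack. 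So one cannot charge cuts crossed by non-short edges to $\codeg$ alone, and a cut-by-cut assignment of ``either an edge or a hole'' does not balance.

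The Euler-characteristic route you gesture at can be repaired, but it needs a different bookkeeping plus a structural fact you have not recorded: because $\Theta$ is layered, $v_j\preceq v_{j+1}$ forces a directed path from $v_j$ to $v_{j+1}$, and since $\preceq$ is a total order with $v_j$, $v_{j+1}$ consecutive, that path must be a single \emph{short} edge; combined with ``no separating edge'' this gives $n_j\ge 2$ edges of $E^0$ across every cut $j$. Then $\codeg(\Theta)\ge\sum_{e\in E^0\setminus E^c}(k_e-j_e-1)=\sum_j n_j-|E^0(\Theta)|$ and $g(\Theta)=|E^0(\Theta)|-l+1$, so $\codeg+g\ge\sum_j n_j-l+1\ge 2(l-1)-l+1=l-1$. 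That global count is a genuinely different argument from the paper's, which inducts on $\codeg(\Theta)$: apply an $A^\pm$ move (preserving $l$ and $g$, dropping $\codeg$), and either the result is a template (recurse) or it has a separating edge, along which it splits into two templates of total length $l(\Theta)$ (recurse on both). Both proofs rest on the same base fact that consecutive vertices are joined by at least two edges, but the paper's induction avoids the cut-assignment entirely, while your route requires the global Euler-characteristic balance rather than the local per-cut charging you describe.
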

\begin{proof}
  The proof goes by induction on $\codeg(\Theta)$.
The lemma holds if $\codeg(\Theta)=0$, since any two consecutive
vertices of $\Theta$ are connected by at least two edges.
If $\codeg(\Theta)>0$, then an operation $A^{\pm}$ produces a graph
$\Theta'$ with
\[
l(\Theta')=l(\Theta),
\qquad\qquad
g(\Theta')=g(\Theta),
\qquad\mbox{and}\qquad
\codeg(\Theta')\le \codeg(\Theta)-1.
\]
There are now two cases: either $\Theta'$ is a template, or it
contains a separating edge.
In the former case, the lemma holds by induction.
In the latter case, denote by $e$ the separating edge of
$\Theta'$, and $\Theta_1'$ and $\Theta_2'$ the two connected
components of $\Theta'\setminus\{e\}$. Both $\Theta_1'$ and
$\Theta_2'$ are templates, and one has
\[
l(\Theta'_1)+l(\Theta'_2)=l(\Theta),
\qquad
\codeg(\Theta'_1)+\codeg(\Theta'_2)\le \codeg(\Theta)-1,
\qquad\mbox{and}\qquad
g(\Theta'_1)+g(\Theta'_2)=g(\Theta).
\]
Hence the lemma holds by induction again.
\end{proof}

Given a layered floor diagram $\D=(\Gamma,\omega)$,
we denote by $E^u(\D)$ the union  of
\begin{itemize}
\item
  the set of separating edges $e$ of
$\D$,
  \item the set of  edges in $E^{-\infty}(\Gamma)$ and $E^{+\infty}(\Gamma)$
    adjacent to the minimal and maximal floor of $\D$, respectively,
\end{itemize}
and we denote by
$\D_1,\cdots,\D_l$ the 
 connected components   of
 $\D\setminus E^u(\D)$ that are not reduced to a non-extremal vertex.
 Each $\D_j$  equipped with the 
 the weight function $\omega|_{E(\D_j)\setminus E^c(\D_j)}$ is a
 pre-template.
 If $\D_1$ is not a template, then necessarily 
 $E^u(\D)\subset E^{-\infty}(\Gamma)\cup E^{+\infty}(\Gamma)$ and
 $\D_1=\D\setminus E^u(\D)$.

 \begin{defi}\label{def:st layered}
A layered floor diagram $\D=(\Gamma,\omega)$ is said to be
\emph{strongly layered} if each $\D_j$ 
equipped with the 
 the weight function $\omega|_{E(\D_j)\setminus E^c(\D_j)}$  is  a template.
\end{defi}
 
\medskip
Now we explain how to reverse this decomposing process.
A collection of templates  $\Xi=(\Theta_1,\cdots,\Theta_m)$
is said to be \emph{admissible} if 
 $E^{+\infty}(\Theta_1)=E^{-\infty}(\Theta_m)=\emptyset$, and
$\Theta_2,\cdots,\Theta_{m-1}$ are closed.
Given $a\in\Z_{>0}$, we denote by $A_{a}(\Xi)$ the set of sequences of  positive
  integers
  $\kappa=(k_1=1,k_2,\cdots,k_m)$ such that
  \begin{itemize}
  \item  $\forall j\in\{1,\cdots,m-1\},\ k_{j+1}\ge k_j+l(\Theta_j)$;
    \item  $k_m+l(\Theta_m)= a+1$.
  \end{itemize}
Given $\kappa\in A_{a}(\Xi)$ and additional  integers $n\ge 0$ and 
$b\ge\Card(E^{+\infty}(\Theta_m))$, we
denote by
$B_{a,b,n}(\Xi,\kappa)$ the set of collections
$\Omega=(\omega_1,\cdots,\omega_m)$ where
$\omega_j:E(\Theta_j)\to \Z_{>0}$ is  a weight function extending
$\omega_j:E(\Theta)\setminus E^c(\Theta_j)\to \Z_{>0}$ 
  such that
  \begin{itemize}
  \item $\dive(v)=n$ for any non-extremal vertex $v$ of $\Theta_j$;
  \item $\dive(v)= -\left((a-k_j)n+b -\Card(E^{+\infty}(\Theta_j)\right)$ if 
    $v$ is the
    minimal vertex of $\Theta_j$, when $\Theta_j$ is not reduced to $v$.
  \end{itemize}
Note that by definition $\Theta_j$ may be reduced to $v$ only if $j=1$
or $j=m$.
  We denote by
  \[
  \omega_{\Xi,\Omega}: \bigsqcup_{j=1}^m 
  \Theta_j
   \longrightarrow \Z_{>0}
  \]
  the weight function whose restriction to $\Theta_j$ is $\omega_j$.

 Given three integers $a,b,n\ge 0$, an admissible  collection of
 templates  $\Xi=(\Theta_1,\cdots,\Theta_m)$, and two elements
 $\kappa\in  A_{a}(\Xi)$ and $\Omega\in B_{a,b,n}(\Xi,\kappa)$, we
 construct a strongly
 layered floor diagram $\D$ with Newton polygon $\Delta_{a,b,n}$
 as follows:
 \begin{enumerate}
 \item for each $j\in\{1,\cdots,m-1\}$, connect the maximal vertex of
   $\Theta_j$ to the minimal vertex of $\Theta_{j+1}$ by a chain of
   $k_{j+1}-k_j-l(\Theta_j)+1$ edges, oriented from   $\Theta_j$ to $\Theta_{j+1}$;  denote by $\widetilde\Gamma_{\Xi,\kappa}$ the resulting graph;
 \item extend the weight function
   $\omega_{\Xi,\Omega}$ to $\widetilde\Gamma_{\Xi,\kappa}$ such that
   each non-extremal vertex has divergence $n$; this extended
function is still denoted by   $\omega_{\Xi,\Omega}$;
     \item
     add $an+b -\Card(E^{-\infty}(\Theta_1))$ edges to $E^{-\infty}(\widetilde\Gamma_{\Xi,\kappa})$, all adjacent to the minimal vertex of $\widetilde\Gamma_{\Xi,\kappa}$, and extend $\omega_{\Xi,\Omega}$ by $1$ on these additional edges;
      \item 
      add $b -\Card(E^{+\infty}(\Theta_m))$ edges to $E^{+\infty}(\widetilde\Gamma_{\Xi,\kappa})$, all adjacent to the maximal vertex of $\widetilde\Gamma_{\Xi,\kappa}$, and extend $\omega_{\Xi,\Omega}$ by $1$ on these additional edges; denote by $\Gamma_{\Xi,\kappa}$ the resulting graph.
 \end{enumerate}
 The resulting weighted graph
 $\D_{\Xi,\kappa}=(\Gamma_{\Xi,\kappa},\omega_{\Xi,\Omega})$ is  a
 strongly layered floor 
 diagram  with Newton polygon $\Delta_{a,b,n}$ as announced. Note
 also that
 \[
 g(\D_{\Xi,\kappa})=\sum_{j=1}^m g(\Theta_j)
 \qquad\mbox{and}\qquad
 \codeg(\D_{\Xi,\kappa})=\sum_{j=1}^m\codeg(\Theta_j).
 \]
 These two quantities are called the genus and the codegree of
 $\Xi$, respectively.
 The next proposition generalizes Lemma \ref{lem:d rat} to higher genera.

 \begin{lemma}\label{lem:layered}
    Let $a,b,n,i\in\Z_{\ge 0}$ be such that
\[
\left\{\begin{array}{l}
b>i
\\ a>i+g+1
\end{array}
\right. .
\]
Then any floor diagram with Newton polygon $\Delta_{a,b,n}$
and of codegree at most $i$ is strongly layered. In particular,
the construction above 
establishes  a bijection between the set of triples
$(\Xi,\kappa,\Omega)$, with $\Xi$ admissible of genus
$g$ and codegree $i$,  with
$\kappa\in  A_{a}(\Xi)$ and $\Omega\in B_{a,b,n}(\Xi,\kappa)$ on 
one hand, and the
set of floor diagram with Newton polygon $\Delta_{a,b,n}$, of genus
$g$ and codegree $i$ on the other hand.
 \end{lemma}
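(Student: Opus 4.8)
The plan is to establish the two assertions separately. For the first assertion, I would show that a floor diagram $\D$ with Newton polygon $\Delta_{a,b,n}$ and codegree at most $i$ is layered. Recall that $\D$ is layered exactly when $\preccurlyeq$ is total on $V(\D)$, which for a tree means there are no two distinct minimal floors and no two distinct maximal floors. This is precisely the argument already carried out in the proof of Lemma \ref{lem:d rat}: applying Lemma \ref{lem:min} to $\Delta_{a,b,n}$ (for the minimal floors) and to $-\Delta_{a,b,n}$ (for the maximal floors), and using $b>i$ and $a>i$, one obtains that the number $k_b$ of minimal floors and the number $k_t$ of maximal floors both satisfy: if the count is $\geq 2$, then $\codeg(\D)\geq b+n>i$, a contradiction. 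Hence $k_b=k_t=1$ and $\D$ is layered. This part requires no genus-0 hypothesis, so it transfers verbatim to higher genus.

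For the second assertion, I would first verify that the construction $(\Xi,\kappa,\Omega)\mapsto \D_{\Xi,\kappa}$ is well-defined — that is, that $\D_{\Xi,\kappa}$ really is a floor diagram with Newton polygon $\Delta_{a,b,n}$, of genus $g$ and codegree $i$ — which follows from the divergence conditions imposed in the definitions of $A_a(\Xi)$ and $B_{a,b,n}(\Xi,\kappa)$ together with the additivity formulas $g(\D_{\Xi,\kappa})=\sum_j g(\Theta_j)$ and $\codeg(\D_{\Xi,\kappa})=\sum_j\codeg(\Theta_j)$ stated just above. Then I would construct the inverse map: given a layered floor diagram $\D$ of genus $g$ and codegree $i$, take $E^u(\D)$ to be the set of separating edges together with the sources at the minimal floor and the sinks at the maximal floor, and let $\Theta_1,\dots,\Theta_m$ be the connected components of $\D\setminus E^u(\D)$ that are not isolated non-extremal vertices, ordered along $\preccurlyeq$. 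As observed in the text, each $\Theta_j$ with its restricted weight function is a template; one reads off $\kappa=(k_1=1,k_2,\dots,k_m)$ from the positions of the minimal floors of the $\Theta_j$ in the total order on $V(\D)$, and $\Omega$ from the restriction of $\omega$. The divergence condition $\dive(v)=r(v)-l(v)$ in Definition \ref{df:fd}, which for $\Delta_{a,b,n}$ reads $\dive(v)=n$ at an interior floor and $\dive(v)=-((a-k)n+b)$ at the $k$-th floor counted from the bottom when that floor is minimal in its template, shows that $\kappa\in A_a(\Xi)$ and $\Omega\in B_{a,b,n}(\Xi,\kappa)$, and that $\Xi$ is admissible. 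Checking that these two maps are mutually inverse is then routine bookkeeping.

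The main obstacle is the careful verification that the inverse map lands in the right sets, and in particular that the admissibility and divergence constraints are exactly matched: one must check that chopping at separating edges and at the extremal monovalent edges produces components whose minimal floors carry the prescribed negative divergence $-((a-k_j)n+b)$, which uses that all sources lie at the global minimal floor (guaranteed by codegree $\leq i$, hence layeredness, via the characterization of codegree-$0$-type behavior of sources — more precisely, via Lemma \ref{lem:codegree} applied to operations $A^-$, any source not at the minimal floor would force extra codegree, but here one only needs that the \emph{negative} part of the divergence of each $\Theta_j$'s minimal floor is accounted for by the $an+b$ sources distributed suitably). I would also note that the constraint $b\geq \Card(E^{+\infty}(\Theta_m))$ and the analogous lower bound built into step (3) of the construction are automatically satisfied on the image side because $\Delta_{a,b,n}$ has exactly $an+b$ lattice points on its bottom edge and $b$ on its top edge. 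Once these matchings are in place, bijectivity is immediate, and the additivity of genus and codegree under the decomposition completes the proof.
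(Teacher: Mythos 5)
Your proof of the first assertion has a genuine gap. You write that $\D$ is layered ``exactly when $\preccurlyeq$ is total on $V(\D)$, which for a tree means there are no two distinct minimal floors and no two distinct maximal floors,'' and then claim this ``transfers verbatim to higher genus.'' But a floor diagram of genus $g>0$ is \emph{not} a tree --- it has first Betti number $g$ --- and for a non-tree, uniqueness of the minimal floor and of the maximal floor does \emph{not} imply the order $\preccurlyeq$ is total on $V(\D)$. The simplest counterexample is a ``diamond'': a unique minimal floor, a unique maximal floor, and two incomparable middle floors, which has genus $1$ and unique extrema but is not layered. So the argument from Lemma~\ref{lem:d rat} (which only rules out multiple minima and multiple maxima) is insufficient here, and the part of the proof you labelled as transferring ``verbatim'' is exactly where the higher-genus difficulty lives.

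The paper's proof acknowledges this: after citing the Lemma~\ref{lem:d rat} argument to get a unique minimal and a unique maximal floor, it supposes two \emph{incomparable non-extremal} floors $v_1$, $v_2$ exist, applies a finite sequence of operations $A^{\pm}$ (via Lemma~\ref{lem:codegree}) to reduce to the canonical configuration of Figure~\ref{fig:layered}, and then bounds the codegree from below by $b+2n>i$ using the total outgoing weight $w\geq b+3n$ of the branch vertex $v_0$ below $v_1,v_2$. That extra argument is the substantive content of the first assertion in genus $g>0$, and it is missing from your proposal. Your treatment of the second assertion (well-definedness and explicit inverse map) is reasonable and more detailed than the paper's one-line remark, but it presupposes the first assertion, so the gap above is fatal as written.
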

 \begin{proof}
   The second assertion follows immediately from the first one.
   Assume that there exists
    a non-strongly layered floor diagram $\D$  with Newton polygon $\Delta_{a,b,n}$
    and of codegree at most $i$.

    Suppose first that $\D$ is not layered.
    This means that there exist two floors $v_1$
    and $v_2$ of $\D$ that are not comparable for $\o$. 
    As in the proof of Lemma~\ref{lem:d rat}, the
   floor diagram $\D$ has a unique minimal floor and a unique maximal
   floor. By finitely many
 applications of moves $A^\pm$ and Lemma \ref{lem:codegree}, we reduce to the case
 where
 \begin{itemize}
 \item  $\o$ induces a total order on
   $V(\D)\setminus\{v_1,v_2\}$;
   \item $\D\setminus\{v_1,v_2\}$ is
     disconnected;
  \item elevators in $E^{\pm\infty}(\D)$ are adjacent to an extremal
    floor of $\D$;
     \item  elevators in $E^0(\D)$ not adjacent to $v_1$ nor
    $v_2$ are adjacent to two consecutive floors;   
   \item  elevators in $E^0(\D)$ adjacent to $v_1$ or
    $v_2$
     are  as  depicted in Figure
     \ref{fig:layered} (where weights are not mentioned).
 \end{itemize}
\begin{figure}[h]
\begin{center}
\begin{tabular}{c}
  \includegraphics[height=6cm, angle=0]{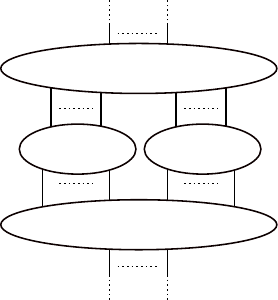}
   \put(-47,85){$v_2$}
   \put(-120,85){$v_1$}
      \put(-85,40){$v_0$}
\end{tabular}
\end{center}
\caption{A non-layered floor diagram}
\label{fig:layered}
\end{figure}
  Defining
     \[
     w=\sum_{\underset{v_0 \longrightarrow}{e}}\omega(e),
     \]
we have that
      \[
     w\ge  b+3n.
     \]
Finitely many
applications of moves $A^\pm$  and Lemma \ref{lem:codegree} also give
\[
\codeg(\D)\ge w-n\ge b+2n >i,
\]
in contradiction to our assumption. Hence $\D$ is layered.

Since $\D$ is not strongly layered, this means by Definition \ref{def:st layered}
that $E^u(\D)\subset E^{-\infty}(\Gamma)\cup E^{+\infty}(\Gamma)$ and
 $\D_1=\D\setminus E^u(\D)$. According to Lemma \ref{lem:temp c+g}, one
has
\[
  \codeg(\D_1)+g\ge a-1.
  \]
  Since $a>i+g+1$, we deduce that $\codeg(\D_1)>i$ in contradiction to
  our assumption. 
 \end{proof}

 \subsection{Polynomiality of
   $(a,b,n)\mapsto\coef[i]{G_{\Delta_{a,b,n}}(g)}$}\label{sec:polg}

 Similarly to floor diagrams, we define a
 \emph{marking of a template $\Theta$} as 
 a bijective
  map
  \[
  m\colon\{1,2,\dots,\Card(V(\Theta)\cup E^0(\Theta))\}\longrightarrow V(\Theta)\cup E^0(\Theta)
  \]
such that
 $j\le k$ whenever $m(j)\preccurlyeq m(k)$.
All markings of a given template $\Theta$ are considered
 up to 
 automorphisms
  of   oriented partially weighted
graph $\varphi:\Theta\longrightarrow \Theta$ such that
and $m=\varphi\circ m'$.

Denoting by
$v_1\prec v_2 \prec \cdots \prec v_{l(\Theta)}$
the vertices of $\Theta$, we define $\gamma_{j}$ to be the number of
edges connecting $v_j$ and $v_{j+1}$, and
\[
\mathcal A(\Theta)=\prod_{j=1}^{l(\Theta)-1} \frac{1}{\gamma_{j}!}.
\]
Next, given an admissible collection of templates
$\Xi=(\Theta_1,\cdots,\Theta_m)$, we set
  \[
  \mathcal A(\Xi)=\prod_{j=1}^m \mathcal A(\Theta_j).
  \]
If  $\kappa\in  A_{a}(\Xi)$ and
  $\Omega\in B_{a,b,n}(\Xi,\kappa)$, any
collection
$M=(M_1,\cdots,M_m) $
of markings of $\Theta_1,\cdots,\Theta_m$
extends uniquely to the graph
$\widetilde\Gamma_{\Xi,\kappa}\setminus
(E^{-\infty}(\widetilde\Gamma_{\Xi,\kappa})\cup
E^{+\infty}(\widetilde\Gamma_{\Xi,\kappa}))$ 
constructed out of
  $\Xi,\kappa$, and $\Omega$. The number of ways to extend this
  marking to a marking of the floor diagram $\D_{\Xi,\kappa}$
  depends on neither  $\kappa$ nor $\Omega$, and is
  denoted by $\nu_{\Xi,M}(a,b,n)$. Analogously to the function
  $\nu_{u,\widetilde u}$ from Section \ref{sec:g=0 gen}, the
  function
  $\nu_{\Xi,M}$ is polynomial and has degree at most
  $\Card(E^{-\infty}(\Theta_1))+\Card(E^{+\infty}(\Theta_m))$ in each of the variables
  $a,b$, and $n$.

\begin{lemma}\label{lem:decomp g}
  Let $a,b,n,i\in\Z_{\ge 0}$ be such that
\[
\left\{\begin{array}{l}
b>i
\\ a>i+g+1
\end{array}
\right. .
\]
Then for any $g\ge 0$ one has
\[
\coef[i]{G_{\Delta_{a,b,n}}(g)}=\sum_{\Xi,M}
\ \mathcal A(\Xi) \times
\nu_{\Xi,M}(a,b,n)
\sum_{\kappa\in  A_{a}(\Xi)}\  \ \sum_{\Omega\in B_{a,b,n}(\Xi,\kappa)}
\ \coef[i-\codeg(\Xi)]{\mu(\D_{\Xi,\Omega}) },
\]
where the first sum ranges over all admissible collections of templates
$\Xi=(\Theta_1,\cdots,\Theta_m)$ of genus $g$ and codegree at most
$i$, and over all collections
of markings $M$ 
of $\Theta_1,\cdots,\Theta_m$. 
\end{lemma}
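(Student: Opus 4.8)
The idea is to combine Theorem \ref{thm:fd} with the template decomposition machinery just developed. First I would invoke Theorem \ref{thm:fd} to write $G_{\Delta_{a,b,n}}(g)(q)=\sum_{(\D,m)}\mu(\D)(q)$, the sum running over isomorphism classes of marked floor diagrams with Newton polygon $\Delta_{a,b,n}$ and genus $g$. Since we only care about the codegree $i$ coefficient, I would first observe that floor diagrams of codegree $>i$ contribute nothing to $\coef[i]{\cdot}$: indeed $\mu(\D)$ has degree $\deg(\D)=\iota_{\Delta_{a,b,n}}-g-\codeg(\D)$, so the top of its support is strictly below the degree of $G_{\Delta_{a,b,n}}(g)$ by more than $i$. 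Hence
\[
\coef[i]{G_{\Delta_{a,b,n}}(g)}=\sum_{\substack{(\D,m)\\ \codeg(\D)\le i}}\coef[i-\codeg(\D)]{\mu(\D)}.
\]
Now the hypotheses $b>i$, $a>i$ together with Lemma \ref{lem:layered} guarantee that every such $\D$ is layered and arises, via the construction preceding Lemma \ref{lem:layered}, from a unique triple $(\Xi,\kappa,\Omega)$ with $\Xi$ an admissible collection of templates of genus $g$ and codegree $\codeg(\D)\le i$, $\kappa\in A_a(\Xi)$, and $\Omega\in B_{a,b,n}(\Xi,\kappa)$; moreover $\codeg(\D_{\Xi,\kappa})=\codeg(\Xi)=\sum_j\codeg(\Theta_j)$ and $g(\D_{\Xi,\kappa})=\sum_j g(\Theta_j)=g$.

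Next I would reorganize the sum according to this bijection, being careful about the bookkeeping of markings. A marked floor diagram $(\D,m)$ with $\D=\D_{\Xi,\kappa}$ is the datum of $(\Xi,\kappa,\Omega)$ together with a marking $m$ of $\D_{\Xi,\kappa}$. Such a marking restricts to a collection $M=(M_1,\dots,M_m)$ of markings of the templates $\Theta_1,\dots,\Theta_m$ (and, as noted in the text, a marking of the whole $\widetilde\Gamma_{\Xi,\kappa}$ once the chains are inserted); conversely, a collection $M$ extends to exactly $\nu_{\Xi,M}(a,b,n)$ markings of $\D_{\Xi,\kappa}$. There is one subtlety: isomorphism classes. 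The automorphisms of $\D_{\Xi,\kappa}$ that fix the template decomposition are exactly the products of permutations of the $\gamma_j$ short edges between consecutive vertices of each $\Theta_j$ (these are the only edges carrying no weight that can be permuted), whence the factor $\mathcal A(\Xi)=\prod_j\prod_{l}\frac{1}{\gamma_l!}$ — the same normalization used in \cite{FM,ArdBlo}. So summing $\coef[i-\codeg(\Xi)]{\mu(\D_{\Xi,\kappa})}$ over isomorphism classes of $(\D,m)$ produces
\[
\coef[i]{G_{\Delta_{a,b,n}}(g)}=\sum_{\Xi,M}\mathcal A(\Xi)\,\nu_{\Xi,M}(a,b,n)\sum_{\kappa\in A_a(\Xi)}\ \sum_{\Omega\in B_{a,b,n}(\Xi,\kappa)}\coef[i-\codeg(\Xi)]{\mu(\D_{\Xi,\Omega})},
\]
where $\D_{\Xi,\Omega}$ abbreviates $\D_{\Xi,\kappa}$ with the weight collection $\Omega$; note $\mu(\D_{\Xi,\kappa})$ depends on $\kappa$ only through the inserted weight-one chains, which contribute trivially, so the inner coefficient really only depends on $\Omega$. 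Finally, $\nu_{\Xi,M}(a,b,n)$ is independent of $\kappa$ and $\Omega$, as recorded in the text just before the lemma, so it legitimately factors out of the two inner sums.

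The main obstacle I anticipate is the automorphism/normalization bookkeeping: getting the factor $\mathcal A(\Xi)$ exactly right requires checking that the only automorphisms of a layered floor diagram preserving its template decomposition are the permutations of parallel short edges, and that these act freely on the set of markings so that passing to isomorphism classes of $(\D,m)$ divides the count of raw $(\Xi,\kappa,\Omega,M)$-data by precisely $\prod_j\gamma_j!$ — equivalently, that a marking of $\D_{\Xi,\kappa}$ up to automorphism is the same as a collection of markings of the $\Theta_j$ up to template automorphism together with an extension count $\nu_{\Xi,M}$. Everything else (dropping high-codegree diagrams, the bijection from Lemma \ref{lem:layered}, additivity of genus and codegree, independence of $\nu_{\Xi,M}$ from $\kappa,\Omega$) is either already proved in the excerpt or a direct unwinding of definitions.
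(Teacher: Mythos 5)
Your proposal follows exactly the paper's approach: invoke Theorem \ref{thm:fd} to reduce to a sum over floor diagrams weighted by their number of markings, observe that only codegree $\le i$ diagrams can contribute, and then apply Lemma \ref{lem:layered} to reorganize the sum via the template decomposition. The paper's own proof is deliberately terse --- it writes $\coef[i]{G_{\Delta_{a,b,n}}(g)}=\sum_{\D}\nu(\D)\coef[i-\codeg(\D)]{\mu(\D)}$ and then simply says ``the result follows from Lemma \ref{lem:layered}'' --- so all the bookkeeping you expand on (the $\mathcal{A}(\Xi)$ normalization and the $\nu_{\Xi,M}$ extension count) is exactly what the paper leaves implicit. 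One small precision on the ``subtlety'' you flag: the factor $\mathcal{A}(\Xi)=\prod 1/\gamma_j!$ is best understood as correcting the overcount in $\sum_{\Omega\in B_{a,b,n}(\Xi,\kappa)}$, which runs over \emph{ordered} assignments of weights to the parallel short edges, rather than as the order of the automorphism group of the concrete $\D_{\Xi,\kappa}$ (that group may be strictly smaller once $\Omega$ assigns distinct weights); the two points of view give the same factor only after averaging, and the ordered-weight interpretation is what makes the identity hold uniformly, as in \cite{FM,ArdBlo}.
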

\begin{proof}
  Given a floor diagram $\D$, we denote by $\nu(\D)$ its number of markings.
  By Theorem \ref{thm:fd}, we have
  \[
  \coef[i]{G_{\Delta_{a,b,n}}(g)}=\sum_{\D} \nu(\D) \coef[i-\codeg(\D)]{\mu(\D)},
  \]
  where the sum is taken over all floor diagrams $\D$ of genus $g$ and
  codegree at most $i$. Now the result follows from  Lemma \ref{lem:layered}.
\end{proof}

Lemma \ref{lem:decomp g} provides a decomposition of
$\coef[i]{G_{\Delta_{a,b,n}}(g)}$ into pieces that are combinatorially
manageable. We prove the polynomiality of 
$\sum_{\Omega\in B_{a,b,n}(\Xi,\kappa)}
\ \coef[i-\codeg(\Xi)]{\mu(\D_{\Xi,\Omega}) }$ in the next lemma, from which we deduce a
proof of Theorem \ref{thm:maing}.

\begin{lemma}\label{lem:pol fin}
  Let  $i,g\in\Z_{\ge 0}$, and $\Xi=(\Theta_1,\cdots,\Theta_m)$
  be an admissible collection of templates  of
  genus $g$ and codegree at most $i$. Given
   $(a,b,n)\in \Z_{\ge 0}$ such that
  \[
  \left\{\begin{array}{l}
  n\ge 1
  \\ b\ge \Card(E^{+\infty}(\Theta_m))
\\b+n>(g+2)i+g
\\ a\ge l(\Theta_1)+\cdots +l(\Theta_m)
\end{array}
\right.,
  \]
   and $\kappa\in A_a(\Xi)$, the
  sum
 \[
\sum_{\Omega\in B_{a,b,n}(\Xi,\kappa)}
\ \coef[i-\codeg(\Xi)]{\mu(\D_{\Xi,\Omega}) }
\]
is polynomial in $a,b,n,k_2,\cdots,k_{m-1}$,
of total degree at most $i-\codeg(\Xi)+g$, and of
\begin{itemize}
\item degree at most $i-\codeg(\Xi)+g$ in the variable $a$;
\item degree at most $g$
  in the variables $b$
     and $n$;
 \item degree at most $g(\Theta_j)$ in the variable $k_j$.
\end{itemize}

\medskip
\noindent
If
$\Xi=(\widetilde\Theta_1,\widetilde\Theta_2,\widetilde\Theta_2,\cdots,\widetilde\Theta_2,\widetilde\Theta_1)$,
with $\widetilde\Theta_1$ and
$\widetilde\Theta_2$ depicted in Figure \ref{fig:Gamma_i},
then the 
  sum
 \[
\sum_{\Omega\in B_{a,b,n}(\Xi,\kappa)}
\ \coef[i]{\mu(\D_{\Xi,\Omega}) }
\]
is polynomial in $a,b,n,k_2,\cdots,k_{g+1}$,
of total degree $i+g$, and of
\begin{itemize}
\item degree  $i+g$ in the variable $a$;
  \item degree  $g$ in the variables $b$ and $n$;
  \item degree  $g(\widetilde\Theta_2)=1$ in the variable $k_j$.
\end{itemize}
\begin{figure}[h]
\begin{center}
  \begin{tabular}{ccccc}
    \includegraphics[height=2cm, angle=0]{Figures1/T1.pdf}
   &\hspace{1.5cm}
   & \includegraphics[height=2cm, angle=0]{Figures1/T2.pdf}
   &\hspace{1.5cm}
   & \includegraphics[height=2.2cm, angle=0]{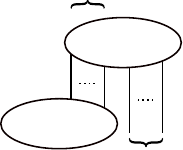}
\put(-18,-10){$i$}
\put(-53,68){$g+1$}
    
\\    \\ a) $\widetilde\Theta_1$ && b) $\widetilde\Theta_2$  &&
c) $\widetilde\Theta_{g,i}$ 
  \end{tabular}
\end{center}
\caption{}
\label{fig:Gamma_i}
\end{figure}
If
$\Xi=(\widetilde\Theta_{g,i},\widetilde\Theta_1)$,
with $\widetilde\Theta_{g,i}$ as depicted in Figure \ref{fig:Gamma_i},
then the
  sum
 \[
\sum_{\Omega\in B_{a,b,n}(\Xi,\kappa)}
\ \coef[0]{\mu(\D_{\Xi,\Omega}) }
\]
is polynomial in $a,b$, and $n$
of total degree $g$, and of degree  $g$ in each of the variables
$a,b$, and $n$.
\end{lemma}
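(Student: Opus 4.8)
The plan is to read the statement off from an edge-by-edge analysis of $\D_{\Xi,\Omega}$. Its edges fall into four groups: the weight-$1$ edges at infinity, which contribute trivially to $\mu$; the non-short edges of $\Theta_1,\dots,\Theta_m$, whose weights are part of the data $\Xi$ and hence contribute a Laurent polynomial $P_\Xi(q)=\prod[\omega(e)]^2$ depending only on $\Xi$; the short edges internal to the templates; and the edges of the chains joining consecutive templates. A telescoping computation shows that the number of chain edges equals $a-1-\sum_j(l(\Theta_j)-1)$, so it depends only on $a$ and $\Xi$ (not on $\kappa$ nor $\Omega$), and a summation of divergences shows that the total weight of edges crossing between the $\ell$-th and $(\ell+1)$-st floors of $\D_{\Xi,\Omega}$ is $(a-\ell)n+b$; since no non-short edge can straddle a ``chain level'', each chain edge has weight $\ge n+b$, which under the hypothesis $b+n>(g+2)i+g$ exceeds $i-\codeg(\Xi)$. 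Call an edge of $\D_{\Xi,\Omega}$ \emph{large} if its weight exceeds $i-\codeg(\Xi)$. Writing $\mu(\D_{\Xi,\Omega})=P_\Xi(q)\cdot\prod_{e\in E^c(\D_{\Xi,\Omega})}[\omega(e)]^2$ with all chain edges large, Corollary \ref{cor:key2} reduces the evaluation of $\coef[i-\codeg(\Xi)]{\mu(\D_{\Xi,\Omega})}$ to the knowledge of the \emph{number} of large edges together with the weights of the remaining, ``small'', internal short edges.

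Next I would control the small edges. By Lemma \ref{lem:temp c+g} a template of genus $\le g$ and codegree $\le i$ has length $\le i+g+1$, so $m\le i+g+2$ and the total number of internal short edges of $\Theta_1,\dots,\Theta_m$ is bounded by a constant depending only on $i,g$. Hence a small edge is always internal to a template, there are boundedly many of them, each of weight in $\{1,\dots,i-\codeg(\Xi)\}$, so there are only finitely many \emph{small configurations}. For a fixed small configuration the number of large edges is $a$ plus a constant, so by Corollaries \ref{cor:key} and \ref{cor:key2} the associated value $\coef[i-\codeg(\Xi)]{P_\Xi(q)\prod_{\mathrm{small}}[\omega(e)]^2\prod_{\mathrm{large}}[\omega(e)]^2}$ is a polynomial in $a$ of degree $\le i-\codeg(\Xi)$, independent of $b,n,\kappa$. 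The number of $\Omega$ realizing that configuration is then a product, over the template levels carrying at least two large parallel edges, of the number of ordered ways to distribute the residual flow there (a crossing weight $(a-k_j-t)n+b$ diminished by the bounded weights of the non-short and small edges crossing that level) among $r$ large edges each $>i-\codeg(\Xi)$; this count is a polynomial of degree $r-1$ in each of $a,b,n,k_j$. Using the Betti identity $g(\Theta_j)=\sum_t(\gamma_t-1)+\#\{\text{non-short edges of }\Theta_j\}$, the exponents $r-1$ sum over the levels of $\Theta_j$ to $\le g(\Theta_j)$; combining this with the $a$-degree $\le i-\codeg(\Xi)$ of the coefficient, multiplying by $\mathcal A(\Xi)$ and by the already-polynomial marking count $\nu_{\Xi,M}(a,b,n)$, and summing over the finitely many small configurations gives polynomiality with degree $\le i-\codeg(\Xi)+g$ in $a$, $\le g$ in $b$ and $n$, $\le g(\Theta_j)$ in $k_j$, and the stated total degree. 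The main obstacle is precisely this bookkeeping: isolating which template levels can host small edges, checking the residual-flow distribution counts are genuine polynomials of the claimed degrees uniformly over the bounded family of templates in play, and, near the top of $\Theta_m$ and in the degenerate cases where $\Theta_1$ or $\Theta_m$ is a single vertex, confirming that the hypotheses on $a,b,n$ keep all the flows above the thresholds making these counts honest; this is exactly what $b+n>(g+2)i+g$ buys.

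Finally the three distinguished collections are obtained by making these contributions explicit and checking there is no cancellation in the leading term. For $\Xi=(\widetilde\Theta_1,\widetilde\Theta_2,\dots,\widetilde\Theta_2,\widetilde\Theta_1)$ with $g$ middle copies one has $\codeg(\Xi)=0$, no non-short template edges, $L(\Xi):=\sum_j(l(\Theta_j)-1)=g$, and $2g$ internal short edges; the dominant small configuration is the empty one, for which all $2g$ short edges are large, the number of large edges is $a-1+g$, and the sum equals $\Phi_i(a-1+g)\prod_{j=2}^{g+1}\big((a-k_j)n+b-2i-1\big)$, the last factor counting the splittings of each $\widetilde\Theta_2$ into two parts $>i$. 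This has degree $i+g$ in $a$, degree $g$ in $b$ and $n$, degree $1=g(\widetilde\Theta_2)$ in each $k_j$, and positive leading coefficient, while every nonempty small configuration destroys at least one free splitting and so contributes strictly smaller degree, so the bounds are attained. For $\Xi=(\widetilde\Theta_{g,i},\widetilde\Theta_1)$ we have $m=2$, $\codeg(\Xi)=i$, and $\widetilde\Theta_{g,i}$ carries its genus in parallel short edges and its codegree in weight-$1$ sources attached above its minimal floor; since $\mu(\D_{\Xi,\Omega})$ is a product of quantum squares $[\omega]^2$, each of leading coefficient $1$, one has $\coef[0]{\mu(\D_{\Xi,\Omega})}=1$ for every $\Omega$, so the sum equals $\Card B_{a,b,n}(\Xi,\kappa)$, which is a product of $g$ flow-distribution counts each linear in $a,b,n$, hence a polynomial of degree $g$ in each of $a,b,n$ and total degree $g$, with positive leading term.
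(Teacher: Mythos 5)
Your proposal takes essentially the same route as the paper's proof: stratify the weight assignments $\Omega$ according to the weights of the few \emph{small} internal short edges (your ``small configurations'' are exactly the paper's fibres $\Upsilon^{-1}(f)$), observe that on each stratum the codegree coefficient depends only on the number of large edges ($a$ plus a constant determined by $\Xi$ and the stratum) so Corollary~\ref{cor:key2} turns it into a polynomial in $a$, count the stratum by a product of stars-and-bars flow-distribution binomials of total degree bounded via the Betti relation $\sum_k(\gamma_k-1)\le g(\Theta_j)$, and sum over the finitely many strata; the explicit computations for the two distinguished $\Xi$ then match the paper's ``direct application''. Two minor slips that do not affect the argument: the correct bound on the number of templates is $m\le g+2$ (each closed middle template has genus at least one), not $m\le i+g+2$, and the factors $\mathcal A(\Xi)$ and $\nu_{\Xi,M}$ belong to Lemma~\ref{lem:decomp g} and the proof of Theorem~\ref{thm:maing} rather than to the sum estimated in this lemma.
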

\begin{proof}
  Let $v_{j,1}\prec \cdots \prec v_{j,l(\Theta_j)}$ be the vertices of
  $\Theta_j$, and let $e_{j,k,1},\cdots, e_{j,k,g_{j,k}+1}$ be
the edges of $\Theta_j$
  connecting $v_{j,k}$ and $v_{j,k+1}$. In particular we have
  \[
  \sum_{k=1}^{l(\Theta_j)-1} g_{j,k}\le g(\Theta_j).
  \]
  Given $\Omega\in  B_{a,b,n}(\Xi,\kappa)$, we also have
  \[
  \sum_{u=1}^{g_{j,k}+1} \omega_{\Xi,\Omega}(e_{j,k,u})=
  (a-k_j-k+1)n+b-c_{j,k},
  \]
with $c_{j,k}\in\{0,1,\cdots,i\}$ that only depends on $\Theta_j$.
Hence
$B_{a,b,n}(\Xi,\kappa)$ is in  bijection with  subsets
 of $\prod_{j,k}\Z_{>0}^{g_{j,k}}$ which correspond
to 
decompositions  of
each integer
\[
\beta_{j,k}=(a-k_j-k+1)n+b-c_{j,k}
\]
in an ordered sum of
$g_{j,k}+1$ positive integers. In particular we have
\[
\Card(B_{a,b,n}(\Xi,\kappa))=
\prod_{j,k} {{\beta_{j,k}-1}\choose{g_{j,k}}}.
\]
Note that since $b+n> (g+2)i+g\ge i+g$ by assumption, and
$\beta_{j,k}\ge b+n-i$, one has
\[
\forall j,k,\qquad \beta_{j,k}-1\ge g\ge g_{j,k}.
\]
In particular $\Card(B_{a,b,n}(\Xi,\kappa))$ is polynomial in
$a,b,n,k_2,\cdots,k_{m-1}$ of total degree at most $g$, and of degree
at most $g(\Theta_j)$ in the variable $k_j$. If 
$\coef[i-\codeg(\Xi)]{\mu(\D_{\Xi,\Omega})}$ were not depending on
$\Omega$, then the lemma would be proved. This is unfortunately not
the case, nevertheless there exists a partition of
$B_{a,b,n}(\Xi,\kappa)$ for which the independency holds
on each subset of this partition.

To show this, let $F= \prod_{j,k}\{0,\cdots,i\}^{g_{j,k}}$ and
\[
\begin{array}{cccc}
  \Upsilon:&B_{a,b,n}(\Xi,\kappa)  & \longrightarrow &F
  \\ & (\omega_1,\cdots,\omega_m) &\longmapsto
  &\left\{\begin{array}{ll}
  f_{j,k,u}=0 &\mbox{if }\omega_{j}(e_{j,k,u})>i-\codeg(\Xi)
  \\ f_{j,k,u}= \omega_{j}(e_{j,k,u})&\mbox{if }\omega_{j}(e_{j,k,u})\le i-\codeg(\Xi)
  \end{array}\right.    
  \end{array}.
\]
Given $f\in F$, we denote by $\lambda_{j,k}(f)$ the number of non-zero
coordinates $f_{j,k,u}$, and we define
\[
\lambda(f)=\sum_{j,k} \lambda_{j,k}(f).
\]
Since $b+n>(g+2)i+g\ge (g+2)i$, we  have that
\[
\beta_{j,k}\ge b+n-i> i(g+1) \ge i(g_{j,k}+1),
\]
which in its turn implies that
$\lambda_{j,k}(f)\le g_{j,k}$ and
$\lambda(f)\le g$ if $\Upsilon^{-1}(f)\ne\emptyset$.
As above, we have
\[
\Card(\Upsilon^{-1}(f))=
\prod_{j,k} {{\beta_{j,k}-\sum_u f_{j,k,u}-1}\choose{g_{j,k}-\lambda_{j,k}}}.
\]
Hence if $\Upsilon^{-1}(f)\ne\emptyset$, then for any $j$ and $k$ one has
\[
\beta_{j,k}-\sum_u f_{j,k,u}-1\ge
\beta_{j,k}-i\lambda(f)-1\ge \beta_{j,k} -ig -1\ge b+n- (g+1)i-1\ge g+i\ge
g_{j,k}-\lambda_{j,k}.
\]
In particular
$\Card(\Upsilon^{-1}(f))$ is polynomial in
$a,b,n,k_2,\cdots,k_{m-1}$ of total degree at most $g-\lambda(f)$,
and of degree at most $g(\Theta_j)-\lambda_j(f)$ in the variable $k_j$.

Furthermore, for any $\Omega\in \Upsilon^{-1}(f)$, we have
\[
\mu(\D_{\Xi,\Omega})= P_{\Xi,f}(q)\times \prod_{\omega_j(e_{j,k,u})>i-\codeg(\Xi)} [\omega_j(e_{j,k,u})]^2,
\]
where $P_{\Xi,f}(q)$ is a Laurent polynomial that only depends on
$\Xi$ and $f$. In particular
it follows from Corollary \ref{cor:key2} that
$\coef[i-\codeg(\Xi)]{\mu(\D_{\Xi,\Omega})}$ is a polynomial $Q_{\Xi,f}(a)$ in $a$ of
degree $i-\codeg(\Xi)$, which only depends on
$\Xi$ and $f$.
We deduce that
 \[
\sum_{\Omega\in \Upsilon^{-1}(f)}
\ \coef[i-\codeg(\Xi)]{\mu(\D_{\Xi,\Omega}) }
= \Card(\Upsilon^{-1}(f))\times Q_{\Xi,f}(a)
\]
is polynomial in $a,b,n,k_2,\cdots,k_{m-1}$,
of total degree at most $i-\codeg(\Xi)+g-\lambda(f)$, and of
\begin{itemize}
\item degree at most $i-\codeg(\Xi)+g-\lambda(f)$ in the variable $a$;
\item degree at most $g-\lambda(f)$
  in the variables $b$
     and $n$.
 \item degree at most $g(\Theta_j)-\sum_k\lambda_{j,k}(f)$ in the variable $k_j$.
\end{itemize}
The first part of the lemma now follows from the equality
\[
\sum_{\Omega\in B_{a,b,n}(\Xi,\kappa)}
\ \coef[i-\codeg(\Xi)]{\mu(\D_{\Xi,\Omega}) }=\sum_{f\in F} \ \sum_{\Omega\in \Upsilon^{-1}(f)}
\ \coef[i-\codeg(\Xi)]{\mu(\D_{\Xi,\Omega}) }.
\]
The second part of the lemma follows from a direct application of
the computations above in both
specific situations. 
\end{proof}

\begin{proof}[Proof of Theorem \ref{thm:maing}]
  Recall that
  $\mathcal U_{i,g}\subset \Z_{\ge 0}^3$
is the set of triples $(a,b,n)$ satisfying
\[
\left\{\begin{array}{l}
n\ge 1
\\ b> i
\\ b+n> (g+2)i+g
\\ a\ge i+2g+2
\end{array}
\right. .
\]
  Let $\Xi=(\Theta_1,\cdots,\Theta_m)$
be  an admissible collection of templates  of
genus $g$ and codegree at most $i$.
By Lemma \ref{lem:temp c+g}, we have
\[
l(\Theta_1)+\cdots+l(\Theta_m)\le i+g+m\le i+2g+2\le a
\qquad\mbox{and}\qquad
b+2n> b+n >(g+2)i+g\ge i.
\]
Hence the set of such  collections of templates is finite,
and the assumptions of Lemma \ref{lem:decomp g} are satisfied. 
Since 
\[
\codeg(\Xi)\ge \Card(E^{-\infty}(\Theta_1))+\Card(E^{+\infty}(\Theta_m)),
\]
 to prove the polynomiality of the function
$ (a,b,n) \mapsto  \coef[i]{G_{\Delta_{a,b,n}}(g)}$ and
to get an upper bound on its degree,
it is enough to prove that on $\mathcal U_{i,g}$, the function
\[
\sum_{\kappa\in  A_{a}(\Xi)}\  \ \sum_{\Omega\in B_{a,b,n}(\Xi,\kappa)}
\ \coef[i-\codeg(\Xi)]{\mu(\D_{\Xi,\Omega}) }
\]
is polynomial, of degree at most $g$ in the variables $b$ and,
and
of degree at most $i+2g-\codeg(\Xi)$ in the variable $a$.

\medskip
Let us describe precisely the set $A_{a}(\Xi)$ when $m\ge 3$, which is by
definition 
the subset of $\Z_{>0}^{m-2}$ defined by the system of inequalities
\[
 \left\{
\begin{array}{rl}
  k_2&\ge 1+l(\Theta_1)
  \\ k_3&\ge k_2+l(\Theta_2)
  \\ \vdots
  \\ k_{m-1}&\ge k_{m-2}+l(\Theta_{m-2})
  \\ a+1-l(\Theta_{m})&\ge k_{m-1}+l(\Theta_{m-1})
  \end{array}
\right. .
\]
Hence, in order to get a parametric description of $A_{a}(\Xi)$, we
need to estimate $l(\Theta_1)+\cdots + l(\Theta_m)$.
By Lemma \ref{lem:temp c+g}, we have
\[
\sum_{j=1}^m l(\Theta_j) \le g+i +m.
\]
Furthermore since $g(\Theta_j)\ge 1$ if
$j\in\{2,\cdots,m-1\}$, we have $m\le g+2$, and we deduce that 
\[
\sum_{j=1}^m l(\Theta_j) \le i+ 2g+2.
\]
In particular, since $a\ge i+ 2g+2 $ the set $A_{a}(\Xi)$ can be described
as the set of $(k_2,\cdots, k_{m-1})\subset \Z_{>0}^{m-2}$ such that
\[
 \left\{
\begin{array}{lcccl}
  1+l(\Theta_1)+\cdots+l(\Theta_{m-3})+l(\Theta_{m-2})&\le
&  k_{m-1}&\le& a+1 - l(\Theta_{m})- l(\Theta_{m-1})
  \\ 1+ l(\Theta_1)+\cdots+l(\Theta_{m-3})&\le &k_{m-2}&\le
&   k_{m-1} - l(\Theta_{m-2})
  \\ &&\vdots&
  \\1+l(\Theta_{1})&\le&  k_{2}&\le&
  k_3-l(\Theta_{2})
  \end{array}
\right. ,
\]
 in other words the sum over $A_a(\Xi)$ can be rewritten as
\[
\sum_{\kappa\in  A_{a}(\Xi)}=
\sum_{k_{m-1}=1+l(\Theta_1)+\cdots+l(\Theta_{m-2})}^{a+1 - l(\Theta_{m})- l(\Theta_{m-1})}
\ \ \sum_{k_{m-2}=1+ l(\Theta_1)+\cdots+l(\Theta_{m-3})}^{k_{m-1} - l(\Theta_{m-2})}
\ \ \cdots
\ \ \sum_{k_{2}=1+l(\Theta_{1})}^{k_3- l(\Theta_{2})}.
\]
Combining Faulhaber's formula with Lemma \ref{lem:pol fin}, we obtain
that the sum
 \[
\sum_{k_{2}=1+l(\Theta_{1})}^{k_3- l(\Theta_{2})}\  \sum_{\Omega\in B_{a,b,n}(\Xi,\kappa)}
\ \coef[i-\codeg(\Xi)]{\mu(\D_{\Xi,\Omega}) }
\]
is polynomial in $a,b,n,k_3,\cdots,k_{m-1}$,
of total degree at most $i-\codeg(\Xi)+g+1$, and of
\begin{itemize}
\item degree at most $i-\codeg(\Xi)+g$ in the variable $a$;
\item degree at most
  $g$ in the variables $b$
    and $n$;
  \item degree at most $g(\Theta_2)+g(\Theta_3)+1$ in the variable $k_3$;
  \item degree at most $g(\Theta_j)$ in the variable $k_j$ with $j\ge 4$.
\end{itemize}
As in the end of the proof of \cite[Theorem 5.1]{FM}, we eventually
obtain by induction that
\[
\sum_{\kappa\in  A_{a}(\Xi)}\  \ \sum_{\Omega\in B_{a,b,n}(\Xi,\kappa)}
\ \coef[i-\codeg(\Xi)]{\mu(\D_{\Xi,\Omega}) }
\]
is polynomial of degree at most $g$ in the variables $b$ and $n$, and
of degree at most $i-\codeg(\Xi)+g+m-2$ in the variable $a$.
Since $m-2\le g$, we obtain that the function
$ (a,b,n) \in\mathcal U_{i,g} \mapsto  \coef[i]{G_{\Delta_{a,b,n}}(g)}$
is polynomial, of degree at most
$i+g$ in the variables $b$ and $n$, and of degree at most
$i+2g$ in the variable $a$. The fact that it is indeed of degree
$i+g$ in the variables $b$ and $n$, and of degree
$i+2g$ in the variable $a$ follows from the second part of Lemma
\ref{lem:pol fin}.
\end{proof}

The proof of Theorem \ref{thm:maing n=0} is identical to the proof of
Theorem \ref{thm:maing}. The only place where the assumption  $n>0$
comes into play is Lemma \ref{lem:pol fin}, in the estimation of the degrees of
\[
\sum_{\Omega\in
  B_{a,b,n}(\Xi,\kappa)}\ \coef[i-\codeg(\Xi)]{\mu(\D_{\Xi,\Omega})}
\]
with respect to its
  different variables, and one sees easily how to adapt Lemma
  \ref{lem:pol fin} when $n=0$.
\begin{proof}[Proof of Theorem \ref{thm:maing n=0}]
If $n=0$, then Lemma  \ref{lem:pol fin} still holds with the following
edition: 
the
  sum
 \[
\sum_{\Omega\in B_{a,b,n}(\Xi,\kappa)}
\ \coef[i-\codeg(\Xi)]{\mu(\D_{\Xi,\Omega}) }
\]
is polynomial in $a$ and $b$,
of total degree at most $i-\codeg(\Xi)+g$, and of
\begin{itemize}
\item degree at most $i-\codeg(\Xi)$ in the variable $a$;
\item degree at most $g$
  in the variables $b$.
\end{itemize}
Indeed in this case we have 
\[
\beta_{j,k}=b-c_{j,k},
\]
which implies exactly as in the proof of  Lemma  \ref{lem:pol fin} that
$\Card(\Upsilon^{-1}(f))$ is polynomial in
$b$ of total degree at most $g-\lambda(f)$. Now the remaining of the
proof of  Lemma  \ref{lem:pol fin} proves the claim above.
The proof of Theorem \ref{thm:maing n=0} follows eventually from
this adapted  Lemma  \ref{lem:pol fin} exactly as Theorem
\ref{thm:maing} follows from
Lemma  \ref{lem:pol fin}.
\end{proof}

\subsection{$b=0$ and $n$ fixed}\label{sec:b=0}
As in the genus 0 case, one easily adapts
 the proof of Theorem \ref{thm:maing} in
 the case when one wants to fix $b=0$ and $n\ge 1$.
There is no additional technical difficulty here with respect to Sections
\ref{sec:g=b=0} and \ref{sec:polg}, so we briefly indicate the main steps.
Again,
the difference with the case $b\ne 0$ is that now a floor diagram $\D$
contributing to 
$ \coef[i]{G_{\Delta_{a,0,n}}(0)}$ may not be layered because of some
highest vertices.
\begin{defi}\label{def:capping templ}
  A \emph{capping template} 
with Newton polygon $\Delta_{a,n}$ is
  a couple
  $\mathcal C=(\Gamma, \omega)$ such that
  \begin{enumerate}
  \item $\Gamma$ is a connected weighted oriented acyclic graph
    with $a$ vertices and with no
    sources nor sinks;
  \item $\Gamma$ has a unique minimal vertex $v_1$, and
    $\Gamma\setminus \{v_1\}$ has at least two minimal vertices;    
\item   for every vertex $v\in V(\Gamma)\setminus \{v_1\}$,
one has $ \dive(v)=n$.
  \end{enumerate}
  The codegree of a capping template $\mathcal C$ with Newton polygon
  $\Delta_{a,n}$ is defined as
  \[
  \codeg(\mathcal C)=\frac{(a-1)(na-2)}{2}-g(\mathcal C)-\sum_{e\in E(\Gamma)}(\omega(e)-1)
  \]
\end{defi}

The proof of the next lemma is analogous to the proof of Lemma \ref{lem:codeg capping}.
\begin{lemma}\label{lem:codeg capping templ}
  A capping template with  Newton polygon
  $\Delta_{a,n}$ has codegree at least $n(a-2)$.  
\end{lemma}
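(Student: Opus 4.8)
The plan is to follow the proof of Lemma \ref{lem:codeg capping} almost verbatim, the only genuinely new ingredient being that the codegree of a capping template now carries an additional term $-g(\mathcal C)$.

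First I would record that the operations $A^+$ behave well on capping templates: since they neither create nor destroy edges, they leave unchanged the number of vertices, the (empty) sets $E^{\pm\infty}$, all divergences, the Newton polygon and the genus, while strictly decreasing the codegree by Lemma \ref{lem:codegree}. Consequently an $A^+$ move sends a capping template with Newton polygon $\Delta_{a,n}$ to another one of the same genus, \emph{provided} the unique minimal vertex $v_1$ is left with at least two distinct successors, so that condition (2) of Definition \ref{def:capping templ} survives. I would then apply a finite succession of operations $A^+$ until reaching a capping template $\mathcal C'$ in which $v_1$ has exactly two distinct successors $w_1\neq w_2$ and every other vertex has all of its outgoing elevators directed to a single successor. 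Since $\codeg(\mathcal C)\geq\codeg(\mathcal C')$, it then suffices to prove $\codeg(\mathcal C')\geq n(a-2)$.

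In this normal form $\mathcal C'$ is, apart from the multiple edges it may carry between consecutive vertices, a graph in which $v_1$ is followed by two chains (staying disjoint up to the top, or merging at some vertex above which $\mathcal C'$ is again a single chain); the multiple edges and the possible merging account for the first Betti number $g(\mathcal C')=g(\mathcal C)$. Exactly as in the proof of Lemma \ref{lem:codeg capping}, I would perform two more operations $A^+$ collapsing the branch through $w_2$ onto the branch through $w_1$, producing a capping template $\mathcal C''$ of the same genus for which Lemma \ref{lem:codegree} gives
\[
\codeg(\mathcal C')\ \geq\ \codeg(\mathcal C'') + n(a-2),
\]
the lost codegree being computed from the fact that the weight of an elevator joining $v_1$ to the $j$-th branch equals $n$ times the number of vertices of that branch, up to a contribution of the edges internal to it — a direct consequence of the divergence condition, as in Definition \ref{def:capping templ}. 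Since $\codeg(\mathcal C'')\geq 0$ — which one obtains, as in Lemma \ref{lem:codeg capping}, from the description of the codegree $0$ configurations, or by completing $\mathcal C''$ to a genuine floor diagram and invoking \cite[Proposition 2.11]{IteMik13} — the lemma follows.

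I expect the bookkeeping, rather than any conceptual point, to be the main obstacle: one must ensure that the reduction to the normal form never collapses $v_1$ to a single successor, and above all that the genus of $\mathcal C$, which is untouched by every $A^+$ move but now enters the codegree with a minus sign, is carried consistently through the reduction and through the two final moves, so that the codegree really drops by exactly $n(a-2)$ and the handles of $\mathcal C$ do not perturb this count.
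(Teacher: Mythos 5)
Your overall strategy — $A^+$ moves to a two-branch normal form, then collapse one branch onto the other and read off the codegree drop from the divergence condition — is the one the paper intends, since it only declares this proof ``analogous to'' that of Lemma~\ref{lem:codeg capping}. But there is a real gap in the numerical step. You compute the lost codegree from the assertion that the weight of the elevator from $v_1$ into the $j$-th branch equals $n$ times the number of vertices of that branch. That identity $\omega_j = na_j$ is correct for capping \emph{trees}: there $\Gamma\setminus\{v_1\}$ splits into components with no outgoing edges, so each component's inflow is its total divergence. It fails in precisely the new situation capping templates permit, namely when the two branches through $w_1$ and $w_2$ merge at some vertex $z$ higher up — a configuration impossible for trees, and available here exactly because $g(\mathcal C)$ may be positive, which is what the extra term $-g(\mathcal C)$ in Definition~\ref{def:capping templ} is there to absorb. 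In that case the inflow $W_j$ to branch $j$ equals $nb_j$ \emph{plus} the positive weight leaving the top of branch $j$ into $z$ (where $b_j$ is the number of vertices of branch $j$ strictly below $z$), so $W_j > nb_j$. The hedge ``up to a contribution of the edges internal to it'' does not cover this: the internal edges of the branch do not change $W_j$, and the discrepancy sits on the edge to the merge vertex, which your argument never analyzes.

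The bound $\codeg(\mathcal C')\geq n(a-2)$ does still hold in the merging case, but for a different reason: writing $c$ for the number of vertices from $z$ up, one finds $\codeg(\mathcal C') = W_1 b_2 + W_2 b_1 - n b_1 b_2$, and using $W_1+W_2=n(a-1)$, $W_j\geq nb_j+1$, and $b_1+b_2+c=a-1$ one checks that $\codeg(\mathcal C')-n(a-2)\geq (b_1-1)\bigl(n(b_2-1)+nc\bigr)+(b_2-b_1)\geq 0$ for $b_1\le b_2$. This computation — in which the extra inflow $W_j-nb_j$ and the codegree contribution of the long edge from the top of a branch to $z$ compensate — is the actual new content beyond Lemma~\ref{lem:codeg capping} and must be carried out rather than asserted. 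A minor secondary imprecision, inherited from the paper's own brevity, is ``two more operations $A^+$'': the number of pushes needed to collapse one branch past the other depends on the branch lengths, and what is uniform is the total codegree drop, not the number of moves.
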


\begin{proof}[Proof of Theorem \ref{thm:main1 expl g02}]
 Let $\D$ be a floor diagram of genus g, Newton polygon
 $\Delta_{a,0,n}$, and of codegree at most $i$.
 As in the proof of Theorem \ref{thm:main1 expl g02}, 
 we have that $\D$ has a unique minimal floor.
Suppose that $\D$ is not layered, and let
 $v_o$ be the lowest floor
of $\D$ such that $\D\setminus \{v_o\}$ is not connected and with
a non-layered  upper part.
 Let $\mathcal C$ be the weighted
subgraph of $\D$ 
obtained by removing from $\D$ all elevators and floors strictly below
$v_o$. As in the proof of Theorem \ref{thm:main1 expl g02}, one shows
that $\mathcal C$ is a capping template.
For a fixed $i$ and $g$, there exist
finitely many capping templates of codegree at
most $i$ and genus at most $g$. The end of the proof is now entirely
analogous to  the end of the proof of
Theorem \ref{thm:main1 expl g02}.
\end{proof}

  \appendix
 \section{Some identities involving quantum numbers}\label{app:quantum int}
For the reader's convenience,
we collect some easy or well-known properties of quantum integers.
Recall that 
given an integer $n\in \Z$, the quantum integer $[k](q)$ is defined by
\[
\displaystyle
    [k](q)=\frac{q^{\frac{k}{2}}-q^{-\frac{k}{2}}}{q^{\frac{1}{2}}-q^{-\frac{1}{2}}}=
    q^{\frac{k-1}{2}}  +q^{\frac{k-3}{2}}+\cdots
    +q^{-\frac{k-3}{2}}+q^{-\frac{k-1}{2}}
    \in\Z_{\ge 0}[q^{\pm \frac{1}{2}}].
    \]

    Given two elements $f,g  \in\Z_{\ge 0}[q^{\pm \frac{1}{2}}]$, we
      write $f\ge g$ if $f-g  \in\Z_{\ge 0}[q^{\pm \frac{1}{2}}]$.

\begin{lemma}\label{lem:quatum product}
  For any $k,l\in \Z_{\ge 0}$, one has
  \[
  [k]\cdot [k+l]=[2k+l-1] + [2k+l-3] + \cdots + [l+3]+[l+1].
  \]
\end{lemma}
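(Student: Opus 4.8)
The plan is to prove the identity
\[
[k]\cdot [k+l]=[2k+l-1] + [2k+l-3] + \cdots + [l+3]+[l+1]
\]
by a direct manipulation of the explicit Laurent polynomial expressions for quantum integers, exploiting the telescoping structure built into the definition.

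First I would recall that $[m](q)=\dfrac{q^{m/2}-q^{-m/2}}{q^{1/2}-q^{-1/2}}$, so that, denoting $t=q^{1/2}-q^{-1/2}$, we have $t\,[m] = q^{m/2}-q^{-m/2}$. Multiplying the right-hand side by $t$ and telescoping gives
\[
t\sum_{j=0}^{k-1}[l+1+2j]
=\sum_{j=0}^{k-1}\bigl(q^{(l+1+2j)/2}-q^{-(l+1+2j)/2}\bigr)
=q^{(l+2k-1)/2}\cdot\frac{q^{-1}\text{-type cancellation}}{\ }
\]
— more precisely, the positive powers form a geometric-like telescoping sum that collapses, and similarly for the negative powers, leaving exactly $q^{(2k+l)/2}\cdot\frac{1-q^{-k}}{\,1-q^{-1}\,}\cdot$(a monomial), which one checks equals $q^{k/2}q^{(k+l)/2}-q^{-k/2}q^{(k+l)/2}-q^{k/2}q^{-(k+l)/2}+q^{-k/2}q^{-(k+l)/2}$ after regrouping. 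The cleanest route is: multiply both sides by $t^2$. The left side becomes $(q^{k/2}-q^{-k/2})(q^{(k+l)/2}-q^{-(k+l)/2})$, which expands to $q^{(2k+l)/2}-q^{l/2}-q^{-l/2}+q^{-(2k+l)/2}$. The right side times $t^2$ is $t$ times $\sum_{j=0}^{k-1}(q^{(l+1+2j)/2}-q^{-(l+1+2j)/2})$; the two geometric sums telescope under multiplication by $t=q^{1/2}-q^{-1/2}$, yielding exactly $q^{(2k+l)/2}-q^{l/2}-q^{-l/2}+q^{-(2k+l)/2}$ as well. Matching the two expressions proves the identity (the factor $t^2$ is a nonzerodivisor in the Laurent polynomial ring, so we may cancel it).

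An equally short alternative, which I would mention as a remark, is induction on $k$: the case $k=1$ is the identity $[1]\cdot[l+1]=[l+1]$, and the inductive step uses the elementary relation $[k+1][k+l+1]-[k][k+l]=[2k+l+1]$, which itself follows from the $t^2$-computation above applied to a difference, or from the Ptolemy-type relation $[a][b]-[a-1][b-1]=[a+b-1]$ for quantum integers. Either way the argument is a routine check.

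The main obstacle is purely bookkeeping: one must be careful with the half-integer exponents and with the direction of the telescoping so that the middle terms cancel and only the four corner monomials $q^{\pm(2k+l)/2}$ and $q^{\pm l/2}$ survive. There is no conceptual difficulty — the identity is a formal consequence of the definition — so the proof should occupy only a few lines once the $t^2$ trick is set up cleanly.
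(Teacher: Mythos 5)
Your argument is correct, but it takes a genuinely different route from the one in the paper. The paper starts by splitting $[k]$ into its symmetric monomial pairs $q^{\frac{k-c}{2}}+q^{-\frac{k-c}{2}}$ and then applies, term by term, the elementary observation
\[
\bigl(q^{-\frac{k-c}{2}}+q^{\frac{k-c}{2}}\bigr)\cdot [k+l](q) = [2k+l-c](q) +[l+c](q),
\]
valid for $c\in\{1,\dots,k-1\}$ (with the obvious middle-term adjustment when $k$ is odd). Summing over the appropriate $c$'s reproduces precisely the list $[l+1],[l+3],\dots,[2k+l-1]$. You instead clear denominators by multiplying both sides by $t^2=(q^{1/2}-q^{-1/2})^2$, reducing the left side to the four-term expansion $q^{(2k+l)/2}-q^{l/2}-q^{-l/2}+q^{-(2k+l)/2}$ and making the right side a doubly telescoping geometric sum that collapses to the same thing; cancelling $t^2$, a nonzerodivisor, finishes the proof. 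The two methods cost about the same and are both formal consequences of the definition; the paper's pairing lemma has the minor advantage of doing a bit of extra work that is reused verbatim in the next corollary (the Ptolemy-type relation $[k][k+l-1]=[k-1][k+l]+[l]$), while your $t^2$-telescoping argument is closer to a direct verification and also yields the inductive step $[k+1][k+l+1]-[k][k+l]=[2k+l+1]$ you mention as an alternative. Either write-up would be acceptable; just be sure in the final version to carry out the telescoping cleanly rather than gesturing at ``geometric-like cancellation,'' as the intermediate display in your sketch is currently not a well-formed identity.
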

\begin{proof}
  This is an easy consequence from the fact that, given
  $c\in\{1,\cdots k-1\}$, one has
  \[
  (q^{-\frac{k-c}{2}}+q^{\frac{k-c}{2}})\cdot [k+l](q) = [2k+l-c](q) +[l+c](q).
  \]
\end{proof}

\begin{cor}\label{cor:quantum ineq1}
  For any positive integers $k$ and $l$, one has
  \[
  [k]\cdot [k+l-1] = [k-1]\cdot [k+l] +[k].
  \]
  In particular, one has $[k]\cdot [k+l-1] \ge [k-1]\cdot [k+l]$
\end{cor}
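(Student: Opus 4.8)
The identity reduces to Lemma \ref{lem:quatum product}, which rewrites a product of two quantum integers as a sum of quantum integers whose indices run through an arithmetic progression of step $2$. The plan is to apply that lemma to each of the two products $[k]\cdot[k+l-1]$ and $[k-1]\cdot[k+l]$, and then to compare the two resulting sums term by term.

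Concretely, writing $k+l-1=k+(l-1)$, which is legitimate because $l\ge 1$ gives $l-1\ge 0$, Lemma \ref{lem:quatum product} expresses $[k]\cdot[k+l-1]$ as a sum of $k$ quantum integers whose largest index is $2k+l-2$ and whose indices decrease by $2$. Writing instead $k+l=(k-1)+(l+1)$, the same lemma expresses $[k-1]\cdot[k+l]$ as a sum of $k-1$ quantum integers, again with largest index $2k+l-2$ and step $2$ (when $k=1$ this second sum is empty, since $[k-1]=[0]=0$, and the corollary should be checked directly in that boundary case). Thus both sums are initial segments, of respective lengths $k$ and $k-1$, of one and the same progression, so the second is obtained from the first by dropping its last term; subtracting then leaves a single quantum integer, namely the one appearing on the right-hand side of the corollary. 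The only point requiring genuine care is this final identification, i.e. keeping exact track of the bottom endpoints of the two progressions; I do not anticipate any obstacle beyond this piece of index bookkeeping.

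Once the equality is proved, the ``in particular'' clause follows at once: by the definition of the order on $\Z_{\ge 0}[q^{\pm\frac{1}{2}}]$ recalled just before Lemma \ref{lem:quatum product}, we have $[k]\cdot[k+l-1]\ge [k-1]\cdot[k+l]$ precisely because their difference is a quantum integer and hence lies in $\Z_{\ge 0}[q^{\pm\frac{1}{2}}]$. As an alternative that bypasses Lemma \ref{lem:quatum product}, one may argue by brute force: clearing denominators through $[m](q)=\frac{q^{m/2}-q^{-m/2}}{q^{1/2}-q^{-1/2}}$, almost every monomial in the numerator of $[k]\cdot[k+l-1]-[k-1]\cdot[k+l]$ cancels and what remains is $(q^{1/2}-q^{-1/2})^{2}$ times a single quantum integer, giving the same conclusion; I nonetheless expect the telescoping argument above to be the shorter one to write out in full.
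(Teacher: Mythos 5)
Your telescoping plan is exactly the paper's argument: the paper expands $[k]\cdot[k+l-1]$ by Lemma \ref{lem:quatum product} into $[2k+l-2]+[2k+l-4]+\cdots+[l+2]+[l]$ and recognizes all but the bottom term as $[k-1]\cdot[k+l]$. However, the one step you deliberately left unchecked --- the bottom endpoints --- is precisely where the content lies, and it does not come out the way you assert. Your first progression has $k$ terms and ends at $[l]$; the second has $k-1$ terms and ends at $[l+2]$; so the difference of the two products is $[l]$, not $[k]$. In fact the corollary as printed is false in general: for $k=2$, $l=1$ one has $[2]\cdot[2]=q+2+q^{-1}$, while $[1]\cdot[3]+[2]=q+1+q^{-1}+q^{1/2}+q^{-1/2}$. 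The correct identity is $[k]\cdot[k+l-1]=[k-1]\cdot[k+l]+[l]$, and this is what the paper's own proof actually derives (its displayed computation ends with $+[l]$); the $+[k]$ in the statement is a typo. So your claim that the leftover term is ``the one appearing on the right-hand side of the corollary'' would fail once the bookkeeping is done, except in the special case $k=l$.

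None of this affects what the corollary is used for later: since $[l]\in\Z_{\ge 0}[q^{\pm\frac{1}{2}}]$, the inequality $[k]\cdot[k+l-1]\ge[k-1]\cdot[k+l]$ stands, and that inequality is all that is invoked in the proof of Corollary \ref{cor:quantum ineq2}. Your $k=1$ boundary remark is consistent with the corrected statement (the empty second sum matches $[0]=0$, and the identity reads $[1]\cdot[l]=[l]$), but not with the printed one. In short: right method, the same as the paper's, but you should carry out the index bookkeeping rather than defer it --- doing so would have exposed the typo and led you to correct the stated equality instead of confirming it.
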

\begin{proof}
  It follows from Lemma \ref{lem:quatum product} that
  \begin{align*}
    [k]\cdot [k+l-1]&= [2k+l-2] + [2k+l-4] + \cdots + [l+2]+[l]
    \\ &=  [k-1]\cdot [(k-1) +l +1] +[l],
  \end{align*}
  and the statement is proved.
\end{proof}

\begin{lemma}\label{lem:quantum div}
  For any positive integer $k$, one has
  \[
  \frac{[2k]}{[2]}(q)=[k](q^2).
  \]
  In particular
  $\frac{[2k]}{[2]}\in\Z_{\ge
    0}[q^{\pm1}]$, and one has
  \[
    [2k-1]\ge \frac{[2k]}{[2]}.
  \]
\end{lemma}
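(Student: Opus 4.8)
The plan is to prove the identity $\frac{[2k]}{[2]}(q)=[k](q^2)$ directly from the definition of the quantum integers, and then to read off the two remaining assertions as immediate consequences.

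First I would work with the closed form $[m](q)=\frac{q^{m/2}-q^{-m/2}}{q^{1/2}-q^{-1/2}}$. Writing $\frac{[2k](q)}{[2](q)}$ as the quotient of two such fractions, the common factor $q^{1/2}-q^{-1/2}$ cancels, leaving $\frac{q^{k}-q^{-k}}{q-q^{-1}}$, which is exactly $[k](q^2)$ by the same formula with $q$ replaced by $q^2$. Alternatively, and perhaps more in the spirit of the other lemmas in this appendix, one can argue at the level of the explicit expansions: since $[k](q^2)=q^{k-1}+q^{k-3}+\cdots+q^{-(k-1)}$ and $[2](q)=q^{1/2}+q^{-1/2}$, the product $[2](q)\cdot[k](q^2)$ consists of the two shifted copies $q^{1/2}[k](q^2)$ and $q^{-1/2}[k](q^2)$, which interleave to give $q^{k-1/2}+q^{k-3/2}+\cdots+q^{-(k-1/2)}=[2k](q)$.

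Next, because $[k](q)\in\Z_{\ge 0}[q^{\pm 1/2}]$ by definition, substituting $q^2$ for $q$ gives $[k](q^2)\in\Z_{\ge 0}[q^{\pm 1}]$, so the identity just proved yields $\frac{[2k]}{[2]}\in\Z_{\ge 0}[q^{\pm 1}]$. For the inequality $[2k-1]\ge\frac{[2k]}{[2]}$, I would expand $[2k-1](q)=\sum_{j=-(k-1)}^{k-1}q^{j}$ and split the sum according to the parity of the exponent: the terms with $j\equiv k-1\pmod 2$ add up to $[k](q^2)$, while the remaining terms add up to $[k-1](q^2)$. Hence $[2k-1]=[k](q^2)+[k-1](q^2)=\frac{[2k]}{[2]}+[k-1](q^2)$, and since $[k-1](q^2)\in\Z_{\ge 0}[q^{\pm 1}]$ the asserted inequality follows.

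There is no genuine obstacle in this lemma: all three claims reduce to elementary manipulations of Laurent polynomials. The only point requiring a little care is the bookkeeping of exponents and their parities in the various expansions, so I would state these expansions explicitly once and reuse them.
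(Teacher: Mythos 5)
Your first argument is exactly the paper's: cancel the common factor $q^{1/2}-q^{-1/2}$ in the quotient of closed forms and recognize $[k]$ evaluated at $q^2$ (the paper's displayed computation contains a small typo in the exponents, but the idea is identical). The paper leaves the two ``in particular'' assertions without comment; your observation that $[2k-1]=[k](q^2)+[k-1](q^2)$ by splitting the exponents according to parity is a clean and correct way to supply the omitted detail for the inequality, and the second (alternative) expansion-based proof of the identity is also correct.
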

\begin{proof}
  One has
  \begin{align*}
    \frac{[2k]}{[2]}(q)
    =  \frac{q^{-2k}-q^{2k}}{q^{-1}-q}
=  \frac{(q^2)^{-k}-(q^2)^{k}}{(q^2)^{-\frac{1}{2}}-(q^2)^{\frac{1}{2}}}
  \end{align*}
  as announced.
\end{proof}

\begin{cor}\label{cor:quantum ineq2}
  For any positive integers $k$ and $l$, one has
  \[
 [k]^2\cdot[l]^2\ge   \frac{[k]\cdot [l]\cdot [k+l]}{[2]}.
  \]
\end{cor}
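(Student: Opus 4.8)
The plan is to reduce the inequality to a single clean estimate and then split into cases according to the parities of $k$ and $l$. Write $Q=[k]^2[l]^2-\frac{[k][l][k+l]}{[2]}=[k][l]\bigl([k][l]-\frac{[k+l]}{[2]}\bigr)$; since $[k][l]\in\Z_{\ge 0}[q^{\pm 1/2}]$, it suffices to exhibit $[k][l]-\frac{[k+l]}{[2]}$, or a suitable $[2]$-multiple of it, as an element of $\Z_{\ge 0}[q^{\pm 1/2}]$. We may assume $k\le l$. The key input is Lemma \ref{lem:quatum product}, which gives
\[
[k]\cdot[l]=[k+l-1]+[k+l-3]+\cdots+[l-k+1];
\]
the term to watch is the top one, $[k+l-1]$, the only summand big enough to ``contain'' $\frac{[k+l]}{[2]}$.

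First I would record the auxiliary inequality $[2]\cdot[k]\cdot[l]\ge[k+l]$. Multiplying the displayed identity by $[2]$ and using $[2]\cdot[m]=[m+1]+[m-1]$ yields
\[
[2]\cdot[k]\cdot[l]=[k+l]+2[k+l-2]+2[k+l-4]+\cdots+2[l-k+2]+[l-k],
\]
so that $[2][k][l]-[k+l]=[l-k]+2[l-k+2]+\cdots+2[l+k-2]$, which is in $\Z_{\ge 0}[q^{\pm 1/2}]$ since $l\ge k$ makes every index nonnegative.

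Now the case split. If $k+l$ is odd, then every index $k+l-1,k+l-3,\dots,l-k+1$ occurring in Lemma \ref{lem:quatum product} is even, so by Lemma \ref{lem:quantum div} each $\frac{[k+l-1-2j]}{[2]}$ lies in $\Z_{\ge 0}[q^{\pm 1}]$, hence so does $\frac{[k][l]}{[2]}$; the same holds if one of $k,l$ is even, using $[2p]=[2]\cdot[p](q^2)$ to cancel $[2]$ outright. In either of these cases $Q=\frac{[k][l]}{[2]}\cdot\bigl([2][k][l]-[k+l]\bigr)$ is a product of two elements of $\Z_{\ge 0}[q^{\pm 1/2}]$, so $Q\ge 0$. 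It remains to treat the case $k$ and $l$ both odd, so $k+l=2p$ and $\frac{[k+l]}{[2]}=[p](q^2)$. Here I would peel off the top term of Lemma \ref{lem:quatum product} using the identity $[2p-1]=[p](q^2)+[p-1](q^2)$, which itself follows from $[2][2p-1]=[2p]+[2p-2]$ together with Lemma \ref{lem:quantum div}; this gives
\[
[k]\cdot[l]-\frac{[k+l]}{[2]}=[p-1](q^2)+[k+l-3]+[k+l-5]+\cdots+[l-k+1]\ \ge\ 0,
\]
and multiplying by $[k][l]\ge 0$ yields $Q\ge 0$, completing the proof.

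The only genuine subtlety — the main obstacle — is that $\frac{[k+l]}{[2]}$ is not a Laurent polynomial in general (it fails to be one exactly when $k+l$ is odd), so the naive factorization $Q=[k][l]\bigl([k][l]-\frac{[k+l]}{[2]}\bigr)$ cannot be applied term by term in a uniform way. This is what forces the parity case distinction above, with the both-odd case needing the extra identity $[2p-1]=[p](q^2)+[p-1](q^2)$; everything else reduces to Lemmas \ref{lem:quatum product} and \ref{lem:quantum div} and the trivial identity $[2][m]=[m+1]+[m-1]$.
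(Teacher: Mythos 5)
Your proof is correct, and its route differs from the paper's. The paper splits on the parity of $k+l$: when $k+l$ is even it chains $\frac{[k+l]}{[2]}\le[k+l-1]\le[k][l]$ (Lemmas \ref{lem:quantum div} and \ref{lem:quatum product}) and multiplies by $[k][l]$; when $k+l$ is odd it assumes $k$ even, uses $\frac{[k]}{[2]}\le[k-1]$ and Corollary \ref{cor:quantum ineq1} to get $\frac{[k][k+l]}{[2]}\le[k-1][k+l]\le[k][k+l-1]\le[k]^2[l]$, then multiplies by $[l]$. You split instead on whether at least one of $k,l$ is even versus both odd, and you replace Corollary \ref{cor:quantum ineq1} by the cleaner auxiliary inequality $[2][k][l]\ge[k+l]$, obtained directly from Lemma \ref{lem:quatum product} and $[2][m]=[m+1]+[m-1]$. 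The factorization $Q=\frac{[k][l]}{[2]}\cdot\bigl([2][k][l]-[k+l]\bigr)$ is a pleasant way to package the argument whenever $\frac{[k][l]}{[2]}$ is a genuine Laurent polynomial; your both-odd case is essentially a hands-on version of the paper's $k+l$-even case, and your identity $[2p-1]=[p](q^2)+[p-1](q^2)$ is just a re-derivation of the inequality $[2p-1]\ge\frac{[2p]}{[2]}$ already recorded in Lemma \ref{lem:quantum div}. One wording wrinkle: the sentence beginning ``If $k+l$ is odd \dots\ the same holds if one of $k,l$ is even'' reads as two cases, but ``$k+l$ odd'' is precisely ``exactly one of $k,l$ even,'' so they overlap; what your first branch actually requires is ``at least one of $k,l$ even,'' and your argument via $[2p]=[2][p](q^2)$ does deliver that (both-even included), so the issue is only cosmetic, not a gap.
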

\begin{proof}
  Suppose first that $k+l$ is even. By Lemmas \ref{lem:quantum div} and
  \ref{lem:quatum product}, one has
    \begin{align*}
    \frac{[k+l]}{[2]}\le [k+l-1]
    \le [k]\cdot [l],
    \end{align*}
    and the lemma is proved in this case.

    If $k+l$ is odd, we may assume that $k$ is even. Then
    by Lemmas \ref{lem:quantum div} and
  \ref{lem:quatum product}, and Corollary \ref{cor:quantum ineq1}, one has
        \begin{align*}
    \frac{[k]\cdot [k+l]}{[2]}\le
         [k-1]\cdot [k+l]
         \le [k]\cdot [k+l-1]
         \le [k]^2\cdot [l],
    \end{align*}
    and the lemma is proved in this case as well.
\end{proof}

\bibliographystyle{alpha}
\bibliography{Biblio.bib}

\end{document}